\documentclass{article}


\usepackage[scale=0.7, a4paper]{geometry}

\usepackage{graphicx} 
\usepackage{epstopdf} 

\usepackage[colorlinks, hypertexnames=false]{hyperref}
\hypersetup{linkcolor=blue, citecolor=red} 

\usepackage{bm}
\usepackage{amsmath} 
\usepackage{amssymb} 
\usepackage{stmaryrd} 
\usepackage{MnSymbol} 
\usepackage{multirow}
\usepackage{enumerate}

\usepackage{amsthm} 
\usepackage{mathtools} 

\usepackage[dvipsnames]{xcolor}


\numberwithin{equation}{section}

\newtheorem{theorem}{Theorem}[section]
\newtheorem{lemma}[theorem]{Lemma}
\newtheorem{remark}[theorem]{Remark}
\newtheorem{corollary}[theorem]{Corollary}


\newcommand{\ds}{\,ds}

\newcommand{\pn}{\,\partial_\mathbf{n}}

\newcommand{\einner}[1]{\left\langle #1\right\rangle}



\newcommand{\norm}[1]{\left\|#1\right\|}


\newcommand{\ddt}{\overline\partial_t}

\title{Finite element approximation to the non-stationary quasi-geostrophic equation}
\author{
     Dohyun Kim\thanks{Hong Kong Centre for
        Cerebro-Cardiovascular Health Engineering, Hong Kong Science Park, Hong Kong SAR, China
        Email: dhkim@hkcoche.org.
    }
    \and
    Amiya K. Pani\thanks{
        Department of Mathematics, Birla Institute of Technology \& Science-Pilani, K.K. Birla
        Goa Campus, NH 17 B, Zuarinagar Goa-403 726, India.
        Email: amiyap@goa.bits-pilani.ac.in.
    }
    \and
    Eun-Jae Park\thanks{
        School of Mathematics and Computing (Computational Science \& Engineering), Yonsei University, Seoul 03722, Korea.
        Email: ejpark@yonsei.ac.kr.
    }
}

\begin{document}

\date{}
\maketitle

\begin{abstract}
    In this paper, $C^1$-conforming element methods are analyzed for the stream function formulation of a single layer non-stationary quasi-geostrophic equation in the ocean circulation model.
    In its first part, some new regularity results are derived, which show exponential decay property when the wind shear stress is zero or exponentially decaying.
    Moreover, when the wind shear stress is independent of time, the existence of an attractor is established.
    In its second part, finite element methods are applied in the spatial direction and for the resulting semi-discrete scheme, the exponential decay property, and the existence of a discrete attractor are proved.
    By introducing an intermediate solution of a discrete linearized problem, optimal error estimates are derived.
     Based on backward-Euler method, a completely discrete scheme is obtained and uniform in  time {\it a priori} estimates are  established.
     Moreover, the existence of a  discrete solution is proved by appealing to a variant of the Brouwer fixed point theorem and then,
      optimal  error estimate is  derived. Finally, several computational experiments with benchmark problems are conducted to confirm our theoretical findings.
\end{abstract}

{\bf Key words}: Quasi-geostrophic model, Stream function formulation, Fourth-order evolution equation, Nonlinear PDEs, Ocean circulation model, New regularity results, Exponential decay property, Existence of attractor, $C^1$ conforming elements, Optimal error estimates, Numerical experiments.

\pagestyle{myheadings} \thispagestyle{plain}
\markboth{AUTHOR}{$C^1$-conforming FEM for evolutionary QGE}

\section{Introduction}\label{sec:intro}

The stream function formulation of one layer non-stationary quasi-geostrophic (QG) equation in a bounded domain $\Omega\subset \mathbb{R}^2$ with boundary $\partial\Omega$ is to
find the streamfunction $\psi$ defined in the space time domain $\Omega\times(0,\infty)$ such that
\begin{equation}\label{eq:qge}
    -\partial_t\Delta\psi+\nu\Delta^2\psi+J(\psi,\Delta\psi)-\mu\partial_x\psi=\mu F \quad\text{in }\Omega\times(0,\infty),
\end{equation}
with initial condition
\begin{equation}\label{eq:ic}
    \psi(0)=\psi_0\text{ in }\Omega,
\end{equation}
and boundary conditions
\begin{equation}\label{eq:bc}
    \psi=0,\quad\pn\psi=0\text{ on }\partial\Omega\times(0,\infty).
\end{equation}
Here, $J(\varphi,\chi)=\partial_y\varphi\partial_x\chi-\partial_x\varphi\partial_y\chi$,
$\nu$ is the diffusion coefficient, $\mu=1/Ro$ where $Ro$ is the Rossby number.

The QG equation plays an important role in the study of large scale wind-driven oceanic flow \cite{Pedlosky2013,McWilliams2006, Vallis2006}.
Despite the simplicity of the QG equation, the QG equation and its linear variants, such as the Munk equation, preserve many of the important features of the underlying oceanic flows, such as the western boundary currents, and the formation of gyres.
Earlier in  the literature, the problem \eqref{eq:qge} and \eqref{eq:ic} with boundary conditions $\psi=0$ and $\Delta \psi =0$ or periodic boundary conditions are considered.
Those boundary conditions naturally split the problem  into two second-order problems.
Then, the existence, uniqueness, and existence of an attractor (see, \cite{bernier1994, dymnikov, medjo2014, Temam1997}) are studied.
However, for the problem with boundary conditions \eqref{eq:bc}, their analysis breaks down since the problem cannot be  naturally split into two second order problems.

From a numerical point of view, there are several numerical
results regarding wind-driven ocean circulation including QGE
\cite{qge_num1_chekroun2020, FIW, Vernois1966, bryan1963,
kimpanipark21}. In \cite{kim2015, jiang2016, rotundo2016, Kim2018,
kim2020}, B-spline based finite element methods are investigated,
but there remain difficulties for curved domains.
In \cite{FIW}, a $C^1$-conforming finite element method (FEM) is
applied to \eqref{eq:qge} with boundary conditions \eqref{eq:bc}
and \textit{a priori} error estimates are established. Here, while
$H^2$-error estimate is optimal, the error estimates for
$H^1$-norm appears to be suboptimal. Recently, the authors in
\cite{kimpanipark21} considered a nonconforming Morley finite
element method for the stationary QG equation and performed
optimal a priori error analysis along with a posteriori error
estimation.

In this article, we consider $C^1$-conforming FEMs for the
non-stationary QG equation with the no-slip boundary condition,
$\psi=\pn\psi=0$. The analysis presented here can be applied to
any $C^1$-FEM, such as the Argyris element, the Bogner-Fox-Schmitt
(BFS) element, the Hsieh-Clough-Tocher (HCT) element. While
$C^1$-conforming elements are relatively complex to implement,
their high-order continuity allows simple formulation. With a
suitable mapping from the reference element to a physical element,
one can efficiently assemble the global matrix, see
\cite{kirby2019}. Also, compared to nonconforming methods of a
similar order \cite{Kim2018, Kim2016}, $C^1$-FEMs typically yield
smaller degrees of freedom due to their strong inter-element
continuity and they are free from stabilization parameters.

We now briefly summarize the results derived in this article.
\begin{itemize}
    \item [(i)] New regularity results are proved which show exponential decay property
    when the forcing function $F=0$ or exponentially decaying in time.
    When nonzero $F$ is independent of time, the existence of a global attractor is shown.
    \item [(ii)] Based on $C^1$-conforming FEM to discretize in spatial directions, a semidiscrete problem is derived, uniform estimate in time as $t\mapsto \infty$ is proved and the existence of a discrete attractor is shown.
    \item [(iii)] Using elliptic projection, optimal error estimates in $L^{\infty}(H^j),\;j=0,1$ and $L^{\infty}(L^{\infty})$-norms
    are established.
    \item [(iv)] Based on the backward Euler method, a completely discrete scheme is derived and existence of a unique discrete solution is proved using an uniform estimate in time of  Dirichlet norm.  A part from the existence of the discrete attractor,  \textit{a priori} error estimates are derived.
 \item [(v)]  Finally, some numerical experiments on benchmark problems are conducted and results confirm our theoretical  findings.
\end{itemize}
When either $F=0$ or $F$ decays exponentially, it is observed that all the results derived in this paper including error estimates
have exponential decay properties.

In  a recent related paper \cite{AMNS}, authors have discussed $C^1$-conforming VEM for the nonstationary Navier-Stokes equations in stream-function vorticity formulation and derived optimal error estimate in $H^1$ norm under smallness
assumption on the data and using fixed point arguments. We remark here that the present analysis can be extended  to include VEM  with some appropriate modifications.

Throughout this article, we use the standard notation of Lebesgue spaces $L^p$ and Sobolev spaces $W^{m,p}(\Omega)$ with their respective norms $\|\cdot\|_{0,p} $ and $\|\cdot\|_{m,p}$.
Further, when $p=2$, we apply the standard notation for Hilbert spaces like $L^2$ and $H^m(\Omega)$ with, respective, inner-products $(\cdot,\cdot)$ and $(\cdot,\cdot)_m$, norms $\|\cdot\|$ and $\|\cdot\|_m$.
Moreover, $|\cdot|_m$ is denotes by the seminorm.
For a Banach space $X$ with norm $\|\cdot\|_{X}$, let $L^p(0,\infty;X)$ or simply $L^p(X)$ whenever there is no confusion.

The rest of the paper is organized as follows.
In the next section, we derive some new regularity results and the existence of an attractor.
In Section~\ref{sec:fem}, $C^1$-conforming FEMs are introduced.
In Section~\ref{sec:err}, \textit{a priori} error estimates are derived for $L^2$-, $H^1$-, and $H^2$-norms.
Then, the backward Euler method is applied to derive the fully discrete scheme in Section~\ref{sec:back} with \textit{a priori} error estimates in both space and time.
Some numerical experiments are provided in Section~\ref{sec:num} and concluding remarks are given in Section~\ref{sec:con}.

\section{A priori estimates}\label{sec:regularity}

The variational formulation that we shall use in this paper is to seek $\psi(t)\in H^2_0(\Omega))$ such that
for almost all $t>0$
\begin{equation}\label{eq:weakform}
    (\partial_t(\nabla\psi),\nabla \chi)+\nu a(\psi,\chi) 
    +b(\psi;\psi,\chi) + \mu b_0(\psi, \chi)
    =\mu(F,\chi)   \forall\chi\in H^2_0(\Omega)
\end{equation}
with $\psi(0)=\psi_0$, where the bilinear forms and the trilinear form are 
\begin{equation*}
    \begin{aligned}
        a(v,w)&:=(\Delta v, \Delta w),\;\;b_0(v,w):=-\frac{1}{2}\left[(v_x,w)-(v,w_x)\right],\;{\mbox {and}}\;
        b(\psi;v,w):=(\Delta\psi, v_yw_x-v_xw_y).
    \end{aligned}
\end{equation*}
This section deals with \textit{a priori} bounds, estimates of an attractor, and some regularity results of the solution to \eqref{eq:weakform}.

The trilinear form $b(\cdot;\cdot,\cdot)$ satisfies
\begin{align*}
    \text{(i) }   & b(\psi,\varphi,\varphi)=0,\quad\forall \Delta\psi\in L^2(\Omega),\varphi\in H^1(\Omega). \\
    \text{(ii) }  & b(\psi;\omega,v)=-b(\psi;v,\omega).\\
    \text{(iii) } & b(\psi;\varphi,z)\leq M\left\{
    \begin{aligned}
         & \norm{\Delta\psi}\norm{\nabla\varphi}_{0,4}\norm{\nabla z}_{0,4}
         &                                                                  & \forall \Delta\psi\in L^2(\Omega),\varphi,z\in W^{1,4}(\Omega),                     \\
         & \norm{\Delta\psi}\norm{\Delta\varphi}\norm{\Delta z}
         &                                                                  & \forall \Delta\psi,\Delta\varphi,\Delta z\in L^2(\Omega),                           \\
         & \norm{\varphi}_{1,\infty}\norm{\Delta\psi}\norm{\nabla z}
         &                                                                  & \forall\varphi\in W^{1,\infty}(\Omega),\Delta\psi\in L^2(\Omega),z\in H^1_0(\Omega).
    \end{aligned}
    \right.
\end{align*}
Note that for $v$ or $w$ in $H^1_0(\Omega)$, $b_0(v,w) = - (v_x, w)$ and for $w=v$,\; $b_0(v,v)=0$.

The Ladyzhenskaya inequalities in two dimension are  given by:
\begin{enumerate}
    \item For $\varphi\in H^1_0(\Omega)$:
          \begin{equation*}
              \norm{\varphi}_{0,4}\leq C_L\norm{\varphi}^{1/2}\norm{\nabla\varphi}^{1/2}.
          \end{equation*}
    \item For $\varphi\in H^2_0(\Omega)$:
          \begin{equation*}
              \norm{\nabla\varphi}_{0,4}\leq C_L\norm{\nabla\varphi}^{1/2}\norm{\Delta\varphi}^{1/2}.
          \end{equation*}
\end{enumerate}
Since $v\in H^1_0(\Omega)$,
\begin{equation*}
    \norm{v}\leq \frac{1}{\sqrt{\bar{\lambda}_{1}}}\norm{\nabla v},
\end{equation*}
where $\bar{\lambda}_1>0$ is the minimum eigenvalue of the eigenvalue problem $-\Delta \phi = \bar{\lambda} \phi$ in $\Omega$ with homogeneous Dirichlet boundary condition.
Moreover, if $v\in H^2_0(\Omega),$ then
\begin{equation}\label{eq:eigen_H1}
    \norm{\nabla v}\leq \frac{1}{\sqrt{\bar{\lambda}_{1} }}\norm{\Delta v}.
\end{equation}

For our subsequent use, we need the following inequality for $\varphi\in H^2_0(\Omega){\cap H^3(\Omega)}$
\begin{equation*}
    \norm{\Delta \varphi}\leq \frac{1}{\sqrt {\bar{\lambda}_1}}\norm{\nabla\Delta\varphi}.
\end{equation*}
Since this proof is nonstandard, we sketch its proof.
Note that using integration by parts
$$\norm{\Delta \varphi}^2= -(\nabla \psi, \nabla\Delta \varphi) \leq \norm{\nabla \varphi} \norm{\nabla\Delta \varphi} \leq \frac{1}{\sqrt{\bar{\lambda}_1}}\norm{\Delta \varphi}\norm{\nabla\Delta \varphi},$$
and the required result follows.
\begin{lemma}\label{lem:qge_prioribound}
    Assume that $\psi_0\in H^1_0(\Omega)$ and $F\in L^2(H^{-2})$.
    Then for $0<\alpha< (\nu\bar{\lambda}_1)/{2}$, the following holds
    \begin{align*}
        &\norm{\nabla\psi(t)}^2+\beta e^{-2\alpha t}\int_0^t e^{2\alpha s}\norm{\Delta\psi(s)}^2\ds 
        \leq e^{-2\alpha t}\left(\norm{\nabla\psi_0}^2+\frac{\mu^2}{\nu}\int_0^te^{2\alpha s}\norm{F}_{-2}^2\ds\right),
    \end{align*}
    where $\beta=(\nu-(2\alpha/{\bar{\lambda}_1}))>0$.
    Moreover, there holds
    \begin{equation}\label{eq:energy_alpha0}
        \|\nabla\psi(t)\|^2+\nu\int_0^t\|\Delta\psi(s)\|^2\ds\leq \|\nabla\psi_0\|^2+\frac{\mu^2}{\nu}\|F\|_{L^2(H^{-2})}^2.
    \end{equation}
\end{lemma}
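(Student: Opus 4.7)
The plan is a standard energy method. I would test the weak formulation \eqref{eq:weakform} with $\chi = \psi(t) \in H^2_0(\Omega)$. The key observation is that the nonlinear term disappears, since by property (i) of the trilinear form, $b(\psi;\psi,\psi) = 0$, and by the stated identity $b_0(\psi,\psi) = 0$. The time-derivative term yields $\tfrac{1}{2}\tfrac{d}{dt}\|\nabla\psi\|^2$ after an integration by parts in space (which is justified by $\psi \in H^2_0$), while $a(\psi,\psi) = \|\Delta\psi\|^2$. Hence I obtain the clean energy identity
\begin{equation*}
    \tfrac{1}{2}\tfrac{d}{dt}\|\nabla\psi\|^2 + \nu\|\Delta\psi\|^2 = \mu(F,\psi).
\end{equation*}

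Next, to pick up the exponential weight I would multiply through by $e^{2\alpha t}$ and rewrite the left-hand side as $\tfrac{1}{2}\tfrac{d}{dt}(e^{2\alpha t}\|\nabla\psi\|^2) - \alpha e^{2\alpha t}\|\nabla\psi\|^2 + \nu e^{2\alpha t}\|\Delta\psi\|^2$. The extra term $\alpha e^{2\alpha t}\|\nabla\psi\|^2$ is absorbed into the $\|\Delta\psi\|^2$ term by the Poincaré-type inequality \eqref{eq:eigen_H1}, costing $\alpha/\bar{\lambda}_1$. For the right-hand side I interpret $(F,\psi)$ as the $H^{-2}$--$H^2_0$ duality pairing so that $\mu(F,\psi) \le \mu\|F\|_{-2}\|\psi\|_2$, use equivalence of $\|\psi\|_2$ and $\|\Delta\psi\|$ on $H^2_0(\Omega)$, and apply Young's inequality with weight $\nu/2$ to split off another $(\nu/2)\|\Delta\psi\|^2$ against $(\mu^2/(2\nu))\|F\|_{-2}^2$.

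Combining these, I arrive at
\begin{equation*}
    \tfrac{1}{2}\tfrac{d}{dt}\left(e^{2\alpha t}\|\nabla\psi\|^2\right) + \tfrac{1}{2}\left(\nu - \tfrac{2\alpha}{\bar{\lambda}_1}\right) e^{2\alpha t}\|\Delta\psi\|^2 \le \tfrac{\mu^2}{2\nu}e^{2\alpha t}\|F\|_{-2}^2,
\end{equation*}
which — after integrating from $0$ to $t$, doubling, and dividing by $e^{2\alpha t}$ — is exactly the first claimed inequality with $\beta = \nu - 2\alpha/\bar{\lambda}_1 > 0$ under the assumption $\alpha < \nu\bar{\lambda}_1/2$. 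The second estimate \eqref{eq:energy_alpha0} follows verbatim from the same argument with $\alpha = 0$ (no need to absorb any $\|\nabla\psi\|^2$ term), after integrating in time and using the definition of the $L^2(H^{-2})$-norm.

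The proof is essentially mechanical because the nonlinearity and the rotation term $b_0$ both vanish when tested against $\psi$; the only thing to watch is the correct bookkeeping of the constant $\beta$, which forces one to use Young's inequality with exactly the weight $\nu/2$ (rather than $\nu$), so that half of the dissipation pays for the forcing and half is available to absorb the $\alpha\|\nabla\psi\|^2$ term via the Poincaré inequality. I do not anticipate any serious obstacle beyond this tracking of constants.
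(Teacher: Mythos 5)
Your proposal is correct and follows essentially the same route as the paper: testing with $e^{2\alpha t}\psi$ (equivalently, testing with $\psi$ and then multiplying by $e^{2\alpha t}$), exploiting $b(\psi;\psi,\psi)=b_0(\psi,\psi)=0$, absorbing the $\alpha\|\nabla\psi\|^2$ term via \eqref{eq:eigen_H1}, and applying Young's inequality with weight $\nu/2$ before integrating; the case $\alpha=0$ gives \eqref{eq:energy_alpha0}. The only cosmetic difference is that the paper bounds $\mu(F,\psi)$ directly by $\mu\|F\|_{-2}\|\Delta\psi\|$ rather than passing through $\|\psi\|_2$, which avoids an extra norm-equivalence constant.
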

\begin{proof}
    Choose $\chi=e^{2\alpha t}\psi$ in \eqref{eq:weakform} and observe that $b(\cdot;\psi,\psi)=0$ with $b_0(\psi,\psi)=0$.
    Then, we arrive at
    \begin{align*}
        \frac{1}{2}\frac{d}{dt}(e^{2\alpha t}\norm{\nabla\psi(t)}^2)+\nu e^{2\alpha t}\norm{\Delta\psi}^2
        &-\alpha e^{2\alpha t}\norm{\nabla\psi}^2 \leq e^{2\alpha t}\mu\norm{F}_{-2}\norm{\Delta\psi}\\
        &\leq \frac{\mu^2}{2\nu}e^{2\alpha t}\norm{F}_{-2}^2+\frac{\nu}{2}e^{2\alpha t}\norm{\Delta\psi}^2.
    \end{align*}
    A use of \eqref{eq:eigen_H1} with kick-back arguments, and then after integration with respect to time as well as multiplying by $e^{-2\alpha t}$ yield the desired result.
    Finally, \eqref{eq:energy_alpha0} follows with $\alpha=0$.
\end{proof}
\begin{remark}\label{remark:2.2}
    To discuss the dynamics of the problem \eqref{eq:qge}--\eqref{eq:bc}, the forcing function $F$ is considered to be independent of time.
    Then, one obtains
    \begin{equation*}
         \norm{\nabla\psi(t)}^2+\beta e^{-2\alpha t}\int_0^t e^{2\alpha s}\norm{\Delta\psi(s)}^2\ds\leq e^{-2\alpha t}\norm{\nabla\psi_0}^2+\frac{\mu^2}{2\nu\alpha}(1-e^{-2\alpha t})\norm{F}_{-2}^2.
     \end{equation*}
\end{remark}
\begin{theorem}[Wellposedness]\label{thm:qge_wellposedness}
    Let $\psi_0\in H^1_0(\Omega)$ and $F\in L^2(0,T;H^{-2}(\Omega))$.
    Then, for any $T>0$, the problem \eqref{eq:qge}--\eqref{eq:bc} admits a unique weak solution $\psi \in C^0(0,T,H^1_0(\Omega))\cap L^2(0,T;H^2_0(\Omega))\cap H^1(0,T;H^{-1}(\Omega))$ satisfying
    \begin{equation}\label{eq:weakform2}
        <\partial_t(\nabla\psi),\nabla\chi>+\nu a(\psi,\chi) +b(\psi;\psi,\chi)+ \mu b_0 (\psi,\chi)=(\mu F,\chi)\quad\forall\chi\in H^2_0(\Omega),\text{ a.e. }t\in(0,T)
    \end{equation}
    with $\psi(0)=\psi_0$.
    Moreover, the solution $\psi$ depends continuously on $\psi_0$.
\end{theorem}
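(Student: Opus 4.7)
The plan is to prove existence by the Faedo--Galerkin method, pass to the limit via an Aubin--Lions compactness argument, and then deduce uniqueness and continuous dependence through an energy estimate combined with the Ladyzhenskaya inequality. Concretely, choose a Galerkin basis $\{\phi_k\}_{k\geq 1}$ of $H^2_0(\Omega)$, e.g.\ eigenfunctions of the biharmonic operator $\Delta^2$ with clamped boundary conditions, orthogonal in $(\nabla\cdot,\nabla\cdot)$. Set $V_n=\mathrm{span}\{\phi_1,\ldots,\phi_n\}$ and seek $\psi_n(t)=\sum_{k=1}^n c_k^n(t)\phi_k$ satisfying the Galerkin analogue of \eqref{eq:weakform} on $V_n$, with $\psi_n(0)$ the $H^1_0$-projection of $\psi_0$ onto $V_n$. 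The Cauchy--Lipschitz theorem applied to the resulting nonlinear ODE system yields a local-in-time solution. Testing with $\chi=\psi_n$, using $b(\psi_n;\psi_n,\psi_n)=0$ and $b_0(\psi_n,\psi_n)=0$, the proof of Lemma~\ref{lem:qge_prioribound} carries over at the Galerkin level and gives uniform bounds on $\psi_n$ in $L^\infty(0,T;H^1_0)\cap L^2(0,T;H^2_0)$, which extend the solution to $[0,T]$. Inserting these bounds into the equation and using the first estimate in (iii) together with the Ladyzhenskaya inequality then produces a uniform bound on $\partial_t\psi_n$ in $L^2(0,T;H^{-1}(\Omega))$.

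By Banach--Alaoglu, a subsequence (not relabelled) satisfies $\psi_n\rightharpoonup\psi$ weakly-$*$ in $L^\infty(0,T;H^1_0)$, weakly in $L^2(0,T;H^2_0)$, and $\partial_t\psi_n\rightharpoonup\partial_t\psi$ weakly in $L^2(0,T;H^{-1})$. The Aubin--Lions lemma, via the compact embedding $H^2_0(\Omega)\hookrightarrow H^1_0(\Omega)$, upgrades this to strong convergence $\psi_n\to\psi$ in $L^2(0,T;H^1_0)$. The linear terms pass to the limit by weak convergence; for the trilinear form, fix $\chi\in V_m\subset H^2_0$ and note that $b(\psi_n;\psi_n,\chi)=(\Delta\psi_n,(\psi_n)_y\chi_x-(\psi_n)_x\chi_y)$ pairs a weakly convergent factor ($\Delta\psi_n$ in $L^2$) with a strongly convergent one ($\nabla\psi_n$ in $L^2$), hence converges. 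A density argument extends \eqref{eq:weakform2} to all $\chi\in H^2_0$, and the continuous embedding $L^2(0,T;H^2_0)\cap H^1(0,T;H^{-1})\hookrightarrow C([0,T];L^2)$ delivers the time continuity and identifies the initial datum.

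For uniqueness and continuous dependence, let $\psi_1,\psi_2$ solve \eqref{eq:weakform2} with data $\psi_0^1,\psi_0^2$, set $e=\psi_1-\psi_2$, and test the difference of the two weak formulations with $\chi=e$. Using $b_0(e,e)=0$, $b(\psi_1;e,e)=0$, and rewriting $b(\psi_1;\psi_1,e)-b(\psi_2;\psi_2,e)=b(e;\psi_2,e)$, one arrives at
\begin{equation*}
    \frac{1}{2}\frac{d}{dt}\norm{\nabla e}^2+\nu\norm{\Delta e}^2=-b(e;\psi_2,e).
\end{equation*}
Estimating the right-hand side with the first bound in (iii) and the Ladyzhenskaya inequality gives
\begin{equation*}
    |b(e;\psi_2,e)|\leq \frac{\nu}{2}\norm{\Delta e}^2+C(\nu)\norm{\nabla\psi_2}^2\norm{\Delta\psi_2}^2\norm{\nabla e}^2,
\end{equation*}
and since $\norm{\nabla\psi_2}^2\norm{\Delta\psi_2}^2\in L^1(0,T)$ by Lemma~\ref{lem:qge_prioribound}, Gronwall's inequality yields $\norm{\nabla e(t)}^2\leq C_T\norm{\nabla(\psi_0^1-\psi_0^2)}^2$, giving both uniqueness and continuous dependence simultaneously.

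The main technical obstacle is the nonlinear term, which carries two derivatives of $\psi_n$ through the factor $\Delta\psi_n$: weak convergence in $L^2(0,T;H^2_0)$ alone cannot handle the product, so the Aubin--Lions step, powered by the correct dual-space bound on $\partial_t\psi_n$, is indispensable in the existence proof. The same difficulty reappears in uniqueness and is tamed by the Ladyzhenskaya inequality, which allows the dangerous $\norm{\Delta e}^2$ factor to be absorbed into the viscous dissipation.
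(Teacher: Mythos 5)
Your proposal follows essentially the same route as the paper: Faedo--Galerkin approximation with the \textit{a priori} bound of Lemma~\ref{lem:qge_prioribound} at the Galerkin level, weak/weak-$*$ and Aubin--Lions compactness for existence, and an energy argument on the difference of two solutions using the Ladyzhenskaya and Young inequalities followed by Gronwall for uniqueness and continuous dependence (your splitting into $b(e;\psi_2,e)$ versus the paper's $-b(\Phi;\psi_1,\Phi)-b(\psi_2;\Phi,\Phi)$ is an immaterial variant). The only small imprecision is that the first bound in (iii) combined with the Ladyzhenskaya inequality controls the nonlinear term by $C\norm{\Delta\psi_n}^{3/2}\norm{\nabla\psi_n}^{1/2}$, which is only $L^{4/3}$ in time, so your stated $L^2(0,T;H^{-1})$ bound on $\partial_t\psi_n$ requires a different pairing (e.g.\ the third bound in (iii) against smoother test functions, giving the integrable factor $\norm{\Delta\psi_n}\norm{\nabla\psi_n}$); this does not affect the compactness step, and the paper's own sketch is no more detailed on this point.
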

Bernier \cite{bernier1994} has discussed existence of a unique strong solution to problem \eqref{eq:qge} -\eqref{eq:ic} with boundary condition $\psi=0$ and $\Delta\psi=0$.
Using the Faedo-Galerkin method, below, we briefly indicate the proof of Theorem~\ref{thm:qge_wellposedness}.
\begin{proof}
    Using the Faedo-Galerkin method and the \textit{a priori} uniform bound as in Lemma~\ref{lem:qge_prioribound} for the Galerkin approximations, standard weak and weak* compactness arguments with the Aubin-Lions compactness argument, see \cite{aubin1963}, one easily provides the proof of the existence of a solution.
    It, therefore, remains to prove the continuous dependence property.
    Let $\psi_i$, $i=1,2$ be weak solutions of \eqref{eq:weakform2} with initial data $\psi_{i,0}\in H^1_0$.
    Setting $\Phi=\psi_1-\psi_2$, it follows that $\Phi$ satisfies
    \begin{equation*}
        (\partial_t(\nabla\Phi),\nabla\chi)+\nu(\Delta\Phi,\Delta\chi)+\mu b_0(\Phi,\chi)=-b(\Phi;\psi_1,\chi)-b(\psi_2;\Phi,\chi).
    \end{equation*}
    Choose $\chi=\Phi$ and notice that $b(\psi_2;\Phi,\Phi)=0$ with $b_0(\Phi,\Phi)=0$.
    With the aid of Lemma~\ref{lem:qge_prioribound}, the Ladyzhenskaya inequality and Young's inequality: $ab\leq a^p/p+b^q/q$ with $p=4/3$ and $q=4$, we arrive at
    \begin{align*}
        \frac{1}{2}\frac{d}{dt}\norm{\nabla\Phi}^2+\nu\norm{\Delta\Phi}^2
        & \leq M\norm{\Delta\Phi}\norm{\nabla\psi_1}_{0,4}\norm{\nabla\Phi}_{0,4}                                       \\
        & \leq C_L^2M\norm{\Delta\Phi}^{3/2}\norm{\nabla\psi_1}^{1/2}\norm{\Delta\psi_1}^{1/2}\norm{\nabla\Phi}^{1/2}   \\
        & \leq \frac{3\nu}{4}\norm{\Delta\Phi}^2+C\nu^{-3}\norm{\nabla\psi_1}^2\norm{\Delta\psi_1}^2\norm{\nabla\Phi}^2 \\
        & \leq \frac{3\nu}{4}\norm{\Delta\Phi}^2+C\nu^{-3}(\|\nabla\psi_{0,1}\|^2+\frac{\mu^2}{\nu}\|F\|_{L^2(H^{-2})}^2)\norm{\Delta\psi_1}^2\|\nabla\Phi\|^2.
    \end{align*}
    Integrate with respect to time and use Lemma~\ref{lem:qge_prioribound} to obtain
    \begin{equation*}
        \norm{\nabla\Phi}^2
        \leq \norm{\nabla\Phi(0)}^2
        +{C(\|\nabla\psi_{0,1},\|F\|_{L^2(H^{-2})},\mu^2,\nu^{-3})}
        \int_0^t\norm{\Delta\psi_1}^2\norm{\nabla\Phi}^2\ds.
    \end{equation*}
    An application of Gronwall's Lemma with the bound from Lemma~\ref{lem:qge_prioribound} yields
    \begin{equation*}
        \begin{aligned}
        \norm{\nabla\Phi(t)}^2
        &\leq C\exp\Big(C\int_0^t\norm{\Delta\psi_1}^2\ds\Big)\norm{\nabla\Phi(0)}^2\\
        &\leq Ce^{CT}\norm{\nabla\Phi(0)}^2.
        \end{aligned}
    \end{equation*}
    where generic constants $C$ depend on $\|\nabla\psi_{0,1}\|$, $\|F\|_{L^2(H^{-2})}$, $\mu^2$ and $\nu^{-3}$.
    As a consequence, the uniqueness holds and this completes the rest of the proof.
\end{proof}
\begin{remark}\label{remark:2.4}
    The L'Hospital rule implies
    \begin{equation*}
        \limsup_{t\rightarrow\infty} e^{-2\alpha t}\int_0^t e^{2\alpha s}\norm{\Delta\psi(s)}^2\ds
        =\frac{1}{2\alpha}\limsup_{t\rightarrow\infty}\norm{\Delta\psi(t)}^2.
    \end{equation*}
    Thus, by taking limit supremum in Remark~\ref{remark:2.2} with $\alpha=\nu\bar{\lambda}_1/4$, we obtain
    \begin{equation*}
        \limsup_{t\rightarrow\infty}\norm{\Delta\psi(t)}\leq \frac{\mu^2}{2\nu^2}\norm{F}_{-2}.
    \end{equation*}
    Moreover, for $\alpha=0$ and time-independent $F$, we obtain
    \begin{equation*}
        \frac{d}{dt}\norm{\nabla\psi(t)}^2+\nu\norm{\Delta\psi(t)}^2\leq \frac{\mu^2}{\nu}\norm{F}_{-2}^2.
    \end{equation*}
    Integrating both sides from $t$ to $t+T_0$ for some $T_0>0$,
    \begin{equation*}
        \begin{aligned}
          \norm{\nabla \psi(t+T_0)}^2+ \nu\int_t^{t+T_0}\norm{\Delta\psi}^2\ds
            &\leq \norm{\nabla\psi(t)}^2+\frac{T_0\mu^2}{\nu}\norm{F}_{-2}^2\\
            &\leq \norm{\psi_0}^2+\frac{\mu^2}{\alpha\nu}\norm{F}_{-2}+\frac{\mu^2 T_0}{\nu}\norm{F}_{-2}^2\\
            &\leq\norm{\nabla\psi_0}^2+\frac{\mu^2}{\nu}(\frac{1}{\alpha}+T_0)\norm{F}_{-2}^2.
        \end{aligned}
    \end{equation*}
    Hence, dropping the first nonnegative term and then taking limit supremum, we arrive at
    \begin{equation*}
        \nu\limsup_{t\rightarrow\infty}\int_t^{t+T_0}\norm{\Delta\psi}^2\ds\leq \norm{\nabla\psi_0}^2+\frac{\mu^2}{\nu}(\frac{1}{\alpha}+T_0)\norm{F}_{-2}^2.
    \end{equation*}
\end{remark}

\subsection{Regularity results.} This subsection focusses on regularity results to be used in our subsequent analysis.

The following lemma which deals with the regularity of the biharmonic problem with Dirichlet data \cite{blum1980} is useful in proving the regularity result of this subsection.

\begin{lemma} [Regularity of biharmonic equation] \label{lem:regularity-biharmonic}
    For a polygonal domain $\Omega$ with Lipschitz boundary and given function $g\in H^{\delta}(\Omega)$ with $\delta\in [1/2,1)$, let $\Phi\in H^2_0(\Omega)$ solves $\Delta^2 \Phi= g$ in $\Omega$, then
    $$\|\Phi\|_{H^{2+\delta}(\Omega)} \leq C_R\|g\|_{H^{-2+\delta}(\Omega)}.$$
    The above result is also valid for the elliptic index $\delta=1$, if, in addition, the domain $\Omega$ is convex.

    Moreover, if the domain $\Omega$ is convex with all interior angles less than $126.283^\circ$, then there holds for $\delta\in (1,2]$
    $$\|\Phi\|_{H^{2+\delta}(\Omega)} \leq C_R\|g\|_{H^{-2+\delta}(\Omega)}.$$
\end{lemma}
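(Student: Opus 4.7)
The statement is a classical shift-type elliptic regularity theorem for the clamped biharmonic problem on polygonal domains, essentially due to Blum and Rannacher. The plan is to organize the proof into three steps: a coercive base $H^2$ estimate, standard elliptic regularity away from the corners, and the Kondrat'ev--Mellin corner analysis which produces the angle thresholds. For the base step, testing $\Delta^2\Phi=g$ against $\Phi\in H^2_0(\Omega)$ and using that $\|\Delta\cdot\|_{L^2}$ is an equivalent norm on $H^2_0(\Omega)$ (apply Poincaré to $\nabla\Phi$ and then to $\Phi$), one obtains
\[
\|\Phi\|_{H^2}^2 \leq C\,|\langle g,\Phi\rangle| \leq C\,\|g\|_{H^{-2}}\|\Phi\|_{H^2},
\]
hence $\|\Phi\|_{H^2}\leq C\|g\|_{H^{-2}}\leq C\|g\|_{H^{-2+\delta}}$. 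This inequality will absorb the lower-order $\|\Phi\|_{L^2}$ remainders that appear in the localized estimates below.

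Next, introduce a finite partition of unity on $\overline\Omega$. On open patches that are either interior to $\Omega$ or meet only the smooth parts of $\partial\Omega$ (away from all corners), the fourth-order operator $\Delta^2$ together with the clamped boundary conditions is properly elliptic and satisfies the Shapiro--Lopatinskii condition. The classical Agmon--Douglis--Nirenberg theory, implemented via tangential difference quotients, boundary flattening, and recovery of normal derivatives from the equation, supplies the local estimate
\[
\|\Phi\|_{H^{2+\delta}(U)} \leq C\,\bigl(\|g\|_{H^{-2+\delta}(U')}+\|\Phi\|_{L^2(U')}\bigr)
\]
for any $\delta$ in the relevant range.

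The remaining patches cover the polygonal corners one at a time. Near a corner of opening $\omega$, freeze the local geometry so the problem reduces to the clamped biharmonic problem on the infinite wedge $K_\omega$. A Mellin transform in the radial variable shows that the homogeneous problem admits quasi-polynomial solutions $r^z\varphi_z(\theta)$ precisely when $z$ solves the Blum--Rannacher characteristic equation
\[
\sin^2(z\omega) = z^2\sin^2(\omega).
\]
Letting $z_1(\omega)$ denote the root of smallest positive real part, one has $\Phi\in H^{2+\delta}$ near the corner if and only if $\delta+1<\mathrm{Re}\,z_1(\omega)$. A direct study of this equation yields the three angle thresholds quoted in the lemma: $\mathrm{Re}\,z_1>\delta+1$ for $\delta\in[1/2,1)$ under the Lipschitz assumption; $\mathrm{Re}\,z_1>2$ whenever $\omega<\pi$ (convex case, $\delta=1$); and $\mathrm{Re}\,z_1>3$ if and only if $\omega<\omega_c\approx 126.283^\circ$, which fixes the threshold for $\delta\in(1,2]$.

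Assembling the three steps via the partition of unity and absorbing the $\|\Phi\|_{L^2}$ terms by the base estimate yields the global bound $\|\Phi\|_{H^{2+\delta}(\Omega)}\leq C_R\|g\|_{H^{-2+\delta}(\Omega)}$. The principal obstacle is the corner analysis: locating $z_1(\omega)$ rigorously as $\omega$ varies and converting the weighted Kondrat'ev estimates on the wedge back into standard Sobolev norms on $\Omega$. Since this corner calculus is carried out in full in the Blum--Rannacher paper and the lemma is used here merely as a regularity black box for the quasi-geostrophic analysis, the pragmatic course is to record the result by citation rather than reproduce the argument in detail.
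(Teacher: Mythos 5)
The paper offers no proof of this lemma at all: it is stated as a known regularity result and attributed directly to Blum--Rannacher--Leis \cite{blum1980}. Your outline (coercive $H^2$ base estimate, ADN theory away from corners, Mellin/Kondrat'ev corner analysis via the characteristic equation $\sin^2(z\omega)=z^2\sin^2\omega$ yielding the convexity and $126.283^\circ$ thresholds) is a correct sketch of that cited proof, and your conclusion to record the result by citation is exactly what the paper does.
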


\begin{theorem}[Regularity]\label{thm:qge_regularity-1}
    Given $\psi_0\in H^2_0(\Omega)\cap H^3(\Omega)$ and $F\in H^{-1}(\Omega)$, the following estimate holds for all $t >0$
    \begin{equation*}
        \|\partial_t(\nabla \psi_m)(t)\|^2 + \beta e^{-2\alpha t}\int_{0}^{t}e^{2\alpha s}\|\partial_t \Delta {\psi}_m(s)\|^2 \ds \leq C,
    \end{equation*}
    where $C$ is a positive constant depending on $\|\psi_0\|_3, \|F(0)\|_{-1}, \|F_t\|_{L^2(H^{-2})}$, $\mu$ and $\nu$.
\end{theorem}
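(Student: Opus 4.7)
The plan is to differentiate the Galerkin analog of \eqref{eq:weakform} in $t$, test with $e^{2\alpha t}\partial_t\psi_m$, and close the resulting estimate via Gronwall using the uniform $L^1(0,\infty)$ bound on $\|\Delta\psi_m\|^2$ inherited from \eqref{eq:energy_alpha0}. Since $\psi_m$ lives in a finite-dimensional subspace and is therefore smooth in $t$, differentiation is legal and produces
\begin{equation*}
(\partial_{tt}\nabla\psi_m,\nabla\chi)+\nu a(\partial_t\psi_m,\chi)+b(\partial_t\psi_m;\psi_m,\chi)+b(\psi_m;\partial_t\psi_m,\chi)+\mu b_0(\partial_t\psi_m,\chi)=\mu(\partial_t F,\chi).
\end{equation*}
The test choice $\chi=e^{2\alpha t}\partial_t\psi_m$ kills $b(\psi_m;\partial_t\psi_m,\partial_t\psi_m)$ and $b_0(\partial_t\psi_m,\partial_t\psi_m)$; combining the time-derivative term with the $\alpha$-weight and absorbing $\alpha\|\nabla\partial_t\psi_m\|^2$ via \eqref{eq:eigen_H1} produces the coefficient $\beta=\nu-2\alpha/\bar{\lambda}_1>0$ in front of $e^{2\alpha t}\|\Delta\partial_t\psi_m\|^2$, matching the constant in the claim.

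The core work is controlling the surviving nonlinear term. The first variant of the trilinear estimate combined with the two-dimensional Ladyzhenskaya inequality $\|\nabla\varphi\|_{0,4}\leq C_L\|\nabla\varphi\|^{1/2}\|\Delta\varphi\|^{1/2}$ yields
\begin{equation*}
|b(\partial_t\psi_m;\psi_m,\partial_t\psi_m)| \leq MC_L^2\,\|\Delta\partial_t\psi_m\|^{3/2}\|\nabla\partial_t\psi_m\|^{1/2}\|\nabla\psi_m\|^{1/2}\|\Delta\psi_m\|^{1/2},
\end{equation*}
and Young's inequality with conjugate exponents $(4/3,4)$ absorbs a fraction of $\|\Delta\partial_t\psi_m\|^2$ into the coercive term, leaving a residue of the form $C\|\nabla\partial_t\psi_m\|^2\|\Delta\psi_m\|^2$, where $\|\nabla\psi_m\|$ is swallowed into $C$ by Lemma~\ref{lem:qge_prioribound}. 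The forcing contribution $\mu(\partial_t F,\partial_t\psi_m)$ is bounded through $H^{-2}$--$H^2_0$ duality followed by another Young step to produce an integrable $\|\partial_t F\|_{-2}^2$.

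Before closing, I would bound $\|\nabla\partial_t\psi_m(0)\|$ separately in terms of the stated data. Evaluating the Galerkin equation at $t=0$ and testing with $\chi=\partial_t\psi_m(0)$, the biharmonic piece is moved via $(\Delta\psi_0,\Delta\partial_t\psi_m(0))=-(\nabla\Delta\psi_0,\nabla\partial_t\psi_m(0))$, where the boundary contribution vanishes because $\partial_t\psi_m(0)\in H^2_0$; this integration by parts uses precisely the hypothesis $\psi_0\in H^3$. The trilinear term is controlled by the third variant of the estimate together with the two-dimensional embedding $H^3\hookrightarrow W^{1,\infty}$, and $(F(0),\partial_t\psi_m(0))$ supplies the $\|F(0)\|_{-1}$ dependence. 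Setting $u(s)=e^{2\alpha s}\|\nabla\partial_t\psi_m(s)\|^2$ and applying the integral Gronwall with kernel $C\|\Delta\psi_m(s)\|^2$ then gives $u(t)\leq\bigl[u(0)+Ce^{2\alpha t}\|\partial_t F\|_{L^2(H^{-2})}^2\bigr]\exp\bigl(C\int_0^t\|\Delta\psi_m\|^2\,ds\bigr)$; the exponential is bounded uniformly in $t$ by \eqref{eq:energy_alpha0}, and multiplication by $e^{-2\alpha t}$ yields the stated inequality.

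The main obstacle is twofold. First, the initial-data step requires the extra regularity $\psi_0\in H^3$ to move the $\Delta$ off of $\psi_0$, since no bound on $\|\Delta\partial_t\psi_m(0)\|$ is available independently. Second, and more subtly, the nonlinear estimate must be arranged so that the Gronwall kernel is exactly $\|\Delta\psi_m\|^2$ and hence uniformly integrable in time by \eqref{eq:energy_alpha0}; the second or third trilinear variants would instead leave a kernel involving higher norms whose time integrals cannot be controlled uniformly in $t$, and the final constant $C$ would degrade as $t\to\infty$ rather than stay bounded.
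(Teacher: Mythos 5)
Your proposal is correct and follows essentially the same route as the paper: differentiate the Galerkin system in time, test with $e^{2\alpha t}\partial_t\psi_m$, estimate the surviving term $b(\partial_t\psi_m;\partial_t\psi_m,\psi_m)$ via Ladyzhenskaya and Young with exponents $(4/3,4)$ so that the Gronwall kernel is the uniformly integrable $\|\Delta\psi_m\|^2$, and bound the initial term using $\psi_0\in H^3$. The only (immaterial) difference is at $t=0$: you test the equation with $\partial_t\psi_m(0)$ and integrate by parts, whereas the paper bounds $\|\nabla\partial_t\psi_m(0)\|$ by $\|\partial_t\Delta\psi_m(0)\|_{-1}$ and estimates each term of the strong form in $H^{-1}$ — the two are dual formulations of the same estimate and yield the same dependence on $\|\psi_0\|_3$ and $\|F(0)\|_{-1}$.
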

\begin{proof}
    Let $\{\lambda_j\}_{j=1}^{\infty}$ be eigenvalues and $\{\phi_j\}_{j=1}^{\infty}$ be corresponding orthonormalized eigenvectors of $\Delta^2 \phi =\lambda \phi$ in $\Omega$ with homogeneous Dirichlet boundary conditions.
    The set of eigenvectors forms a basis in $H^2_0$, $H^1_0$ and $L^2$.
    Setting $V_m= \overline{\text{span}\{\phi_1,\cdots,\phi_m\}}$, define $\psi_m(t) := \sum_{j=1}^{m} \alpha_j(t)\phi_j \in V_m$ as a solution of
    \begin{equation}\label{eq:Galerkin}
        (\partial_t(\nabla\psi_m),\nabla \phi_k)+\nu a(\psi_m,\phi_k)
        +b(\psi_m;\psi_m,\phi_k) + \mu b_0(\psi_m, \phi_k)
        =(\mu F,\phi_k), \quad k=1,\cdots,m
    \end{equation}
    with $\psi_m(0)=\sum_{j=1}^{m} (\psi_0,\phi_j) \phi_j$.
    Differentiate with respect to time \eqref{eq:Galerkin} and obtain
    \begin{equation}\label{eq:diff_Galerkin}
        \begin{aligned}
            &(\partial_{tt}(\nabla\psi_m),\nabla \phi_k)
            +\nu a(\partial_t\psi_m,\phi_k)
            + \mu b_0(\partial_t \psi_m, \phi_k)\\
            &\quad+b(\partial_t \psi_m;\psi_m,\phi_k) +b(\psi_m;\partial_t \psi_m,\phi_k)
            =(\mu \partial_t F,\phi_k), \quad k=1,\cdots,m.
        \end{aligned}
    \end{equation}
    Multiply \eqref{eq:diff_Galerkin} by $\alpha'_k$, then sum up form from $k=1$ to $m$ to arrive at
    \begin{equation*}
        \begin{aligned}
            &(\partial_{tt}(\nabla\psi_m),\partial_t(\nabla\psi_m))+\nu a(\partial_t\psi_m,\partial_t\psi_m)+\mu b_0(\partial_t\psi_m,\partial_t\psi_m)\\
            &\quad+b(\partial_t\psi_m;\psi_m,\partial_t\psi_m)+b(\psi_m;\partial_t\psi_m,\partial_t\psi_m)=(\mu\partial_tF,\partial_t\psi_m).
        \end{aligned}
    \end{equation*}
   An application of  skew-symmetric property of $b(\psi_m;\cdot,\cdot)$ and $b_0(\cdot,\cdot)$ yields
    \begin{equation}\label{eq:estimate-1}
        \frac{1}{2} \frac{d}{dt} \|\partial_t(\nabla \psi_m)(t)\|^2 + \nu \|\partial_t\Delta \psi_m(t)\|^2
        =b(\partial_t \psi_m;\partial_t\psi_m, \psi_m) + \mu(\partial_t F, \partial_t\psi_m).
    \end{equation}
    To estimate the first term on the right hand side of \eqref{eq:estimate-1},
    a use of the generalized H\"older inequality with the Ladyzhenskaya inequality, boundedness of $\|\nabla \psi_m\|$
    and  the Young's inequality $ab \leq (a/p)+ (b/q)$ with $p=4/3$, $q=4$, $a=\big((\sqrt{\nu/4})\|\partial_t \Delta \psi_m\| \big)^{3/2} $ and $b=\big((\sqrt{4/\nu})\|\partial_t\nabla \psi_m\|\|\nabla \psi_m\|\| \Delta \psi_m\|\big)^{1/2}$
    show
    \begin{align}\label{eq:b-estimate-1}
        b(\partial_t \psi_m; \partial_t \psi_m, \psi_m)
        &\leq M \|\partial_t \Delta \psi_m\|\|\partial_t \nabla \psi_m\|_{0,4}\|\nabla \psi_m\|_{0,4}\nonumber\\
        &\leq C \|\partial_t \Delta \psi_m\|^{3/2}\|\partial_t\nabla \psi_m\|^{1/2}\|\nabla \psi_m\|^{1/2}\| \Delta \psi_m\|^{1/2}\nonumber\\
        &\leq \frac{C}{\nu}\|\Delta \psi_m\|^2\|\partial_t \nabla \psi_m\|^2 + \frac{\nu}{4}\|\partial_t\Delta \psi_m\|^2.
    \end{align}
    Moreover, an application of the Young's inequality implies
    \begin{equation*}
        \mu(\partial_t F, \partial_t \psi_m) \leq \frac{\mu^2}{\nu} \|\partial_t F\|_{-2}^2 + \frac{\nu}{4}\|\partial_t \Delta \psi_m\|^2.
    \end{equation*}
    Substitute \eqref{eq:b-estimate-1} in \eqref{eq:estimate-1}. Then, multiply by
    $e^{2\alpha t}$ and  use kickback arguments to arrive at
    \begin{equation*}
        \begin{aligned}
        \frac{d}{dt} \| e^{\alpha t} \partial_t(\nabla {\psi}_m)(t)\|^2
        &+\nu e^{2\alpha t} \|\partial_t\Delta {\psi}_m(t)\|^2
        - 2 \alpha e^{2\alpha t} \|\partial_t(\nabla {\psi}_m) (t)\|^2\\
        &\leq \frac{2\mu^2}{\nu} e^{2\alpha t} \|{\partial_t F}\|_{-2}^2 + \frac{C}{\nu} \|\Delta \psi_m(t)\|^2 \|e^{\alpha t} \partial_t (\nabla \psi_m)(t)\|^2.
        \end{aligned}
    \end{equation*}
    By \eqref{eq:eigen_H1}, $\|\nabla v\|^2 \leq (1/\bar{\lambda}_1) \|\Delta v\|^2$, we obtain after integration with respect to time
    \begin{equation*}
        \begin{aligned}
        &\|e^{\alpha t}\partial_t(\nabla \psi_m)(t)\|^2 + \beta \int_{0}^{t} e^{2\alpha s}\|\partial_t \Delta {\psi}_m(s)\|^2 \ds\\
        &\leq \| \partial_t(\nabla \psi_m)(0)\|^2+ \frac{2\mu^2}{\nu} \int_{0}^t e^{2\alpha s} \|{\partial_t F}\|_{-2}^2\ds \\
        &\quad+ \frac{C}{\nu} \int_{0}^{t} \|\Delta \psi_m(s)\|^2 \|e^{\alpha t} \partial_t(\nabla \psi_m)(s)\|^2\ds.
        \end{aligned}
    \end{equation*}
    An application of the Gronwall's Lemma with multiplication by $e^{-2\alpha t}$ shows
    \begin{equation}\label{eq:estimate-4}
        \begin{aligned}
        &\|\partial_t(\nabla \psi_m)(t)\|^2 + \beta e^{-2\alpha t}\int_{0}^{t} e^{2\alpha s} \|\partial_t \Delta {\psi}_m(s)\|^2 \ds\\
        &\quad\leq \Big(e^{-2\alpha t} \|\partial_t(\nabla \psi_m)(0)\|^2 +\frac{2\mu^2}{\nu} \|{\partial_t F}\|_{L^2(H^{-2})}^2 \Big)
         \exp \Big( C \nu^{-1}\int_{0}^{t} \|\Delta \psi_m\|^2\ds\Big).
        \end{aligned}
    \end{equation}
    It remains to show the estimate $\|\partial_t(\nabla \psi_m)(0)\|$, we note using
    \begin{equation*}
        \|J(\psi_m(0),\Delta \psi_m(0))\|_{-1} \leq C \|\Delta \psi_m(0)\|_{0,4}\|\nabla \psi_m(0)\|_{0,4} \leq C \|\psi_m(0)\|^2_{3}\leq C \|\psi_0\|_{3}^2
    \end{equation*}
    and
    \begin{equation*}
        \|\Delta^2\psi_m(0)\|_{-1} \leq C \|\psi_m(0)\|_{3}\leq C \|\psi_0\|_{3}
    \end{equation*}
    that
    \begin{align}
        \|\partial_t(\nabla \psi_m)(0)\| &\leq \|\partial_t(\Delta \psi_m)(0)\|_{-1} \nonumber\\
        &\leq \nu \|\Delta^2 \psi_m(0)\|_1 + \|J(\psi_m,\Delta\psi_m)(0)\|_{-1} + \mu (\|\partial_x \psi_m(0)\|_{-1} + \|F(0)\|_{-1})\nonumber\\
        &\leq C \Big(\nu \|\psi_0\|_3 + \|\psi_0\|_3^2 + \mu(\|\nabla\psi_0\| + \|F(0)\|_{-1})\Big).
    \end{align}
    A use of \eqref{eq:energy_alpha0} of Lemma~\ref{lem:qge_prioribound}, which is also valid for the Galerkin approximation 
    to \eqref{eq:estimate-4} with $\|\Delta \psi_m(0)\| \leq \|\Delta \psi_0\|$ yields 
    \begin{equation*}
        \begin{aligned}
        \|\partial_t(\nabla \psi_m)(t)\|^2&+ \beta e^{-2\alpha t}\int_{0}^{t} e^{2\alpha s} \|\partial_t \Delta {\psi}_m(s)\|^2 \ds\\
        &\leq  C(\mu,\nu, \|\psi_0\|_3, \|F(0)\|_{-1}, \|F_t\|_{L^2(H^{-2})}). 
        \end{aligned}
    \end{equation*}
    Taking limit as $m \rightarrow \infty$, we obtain for all $t\in (0,T_0]$
    \begin{equation*}
        \begin{aligned}
        \|\partial_t(\nabla \psi)(t)\|^2&+ \beta e^{-2\alpha t}\int_{0}^{t} e^{2\alpha s} \|\partial_t \Delta {\psi} (s)\|^2 \ds\\
        &\leq  C(\mu,\nu, \|\psi_0\|_3, \|F(0)\|_{-1}, \|F_t\|_{L^2(H^{-2})}). 
        \end{aligned}
    \end{equation*}
    This concludes the rest of the proof.
\end{proof}
Hence forth, the arguments and estimates given in this subsection are formal and these can be justified rigorously following the usual Galerkin type procedure as in Theorem~\ref{thm:qge_regularity-1}, and then passing to the limit.
\begin{theorem}\label{thm:qge_regularity-2}
    Let $\psi_0\in H^{2+\delta}(\Omega)\cap H^2_0(\Omega)$ and $F, F_t \in L^2(H^{-2})$ with $F(0)\in H^{-1}(\Omega)$.
    Then, there is a positive constant $C$ depending on $\|\psi_0\|_3, \|F_t\|_{L^2(H^{-2})}, \|F(0)\|, \mu$ and $\nu$ such that for all $t>0$ for $1/2<\delta \leq 1$
    \begin{equation*}
        \| \psi (t)\|_{2+\delta} + e^{-2\alpha t} \int_{0}^t e^{2\alpha s} \| \psi (s)\|_{2+\delta}^2\ds
        \leq C.
    \end{equation*}
    Moreover, if we further assume that $F_t \in L^2(\Omega),$ then for $1< \delta \leq 2$
    \begin{equation}
       e^{-2\alpha t} \int_{0}^t e^{2\alpha s} \| \psi (s)\|_{2+\delta}^2\ds \leq C.
    \end{equation}
\end{theorem}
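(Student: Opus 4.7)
The plan is to view \eqref{eq:qge} as a biharmonic problem with a time-dependent right-hand side,
\[
\nu \Delta^2 \psi = \mu F + \partial_t \Delta \psi - J(\psi, \Delta \psi) + \mu \partial_x \psi,
\]
and to apply the elliptic regularity of Lemma~\ref{lem:regularity-biharmonic} to deduce
\[
\norm{\psi(t)}_{2+\delta} \leq C_R \nu^{-1}\bigl(\norm{\partial_t \Delta \psi}_{-2+\delta} + \norm{J(\psi,\Delta\psi)}_{-2+\delta} + \mu\norm{\partial_x \psi}_{-2+\delta} + \mu\norm{F}_{-2+\delta}\bigr).
\]
As emphasized in the paragraph preceding the statement, the argument is formal, so $\psi$ may be assumed smooth enough to justify the manipulations and it remains to control the four terms on the right using Lemma~\ref{lem:qge_prioribound}, Remark~\ref{remark:2.4} and Theorem~\ref{thm:qge_regularity-1}.

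For the first claim $\delta\in(1/2,1]$, the embedding $H^{-1}\hookrightarrow H^{-2+\delta}$ reduces the linear terms to $H^{-1}$ bounds: $\norm{\partial_t\Delta\psi}_{-1}\leq\norm{\partial_t\nabla\psi}$ is pointwise bounded by Theorem~\ref{thm:qge_regularity-1}, $\norm{\partial_x\psi}_{-1}\leq C\norm{\nabla\psi}$ is bounded by Lemma~\ref{lem:qge_prioribound}, and $\norm{F}_{-1}$ is controlled by the hypotheses on $F(0)$ and $F_t$. The hard part is the Jacobian. Using the integration-by-parts identity $(J(\psi,\Delta\psi),\chi)=-(\Delta\psi,J(\psi,\chi))$, valid for $\psi\in H^2_0$ since $\nabla\psi=0$ on $\partial\Omega$, together with $\norm{J(\psi,\chi)}\leq C\norm{\nabla\psi}_{0,p}\norm{\nabla\chi}_{0,q}$, I would select the conjugate pair $q=2/\delta$ and $p=2/(1-\delta)$ and invoke the 2D Sobolev embeddings $H^{1-\delta}\hookrightarrow L^{2/\delta}$ and $H^{\delta}\hookrightarrow L^{2/(1-\delta)}$ to obtain
\[
\norm{J(\psi,\Delta\psi)}_{-2+\delta}\leq C\norm{\Delta\psi}\,\norm{\psi}_{1+\delta}\leq C\norm{\Delta\psi}^2,\qquad \delta\in(1/2,1).
\]
Combining the four bounds yields $\norm{\psi(t)}_{2+\delta}^2\leq C(\norm{\partial_t\nabla\psi}^2+\norm{\Delta\psi}^4+\mu^2\norm{\nabla\psi}^2+\mu^2\norm{F}_{-1}^2)$. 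The pointwise statement then follows from the uniform-in-time control of $\norm{\Delta\psi(t)}$ (extracted from testing the weak formulation against $\psi$ and using the bound on $\norm{\partial_t\nabla\psi}$ from Theorem~\ref{thm:qge_regularity-1}). Multiplication by $e^{2\alpha s}$ and integration in time produce the weighted integral bound, using the time-integrated estimates of Lemma~\ref{lem:qge_prioribound} and Theorem~\ref{thm:qge_regularity-1}.

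The endpoint $\delta=1$ is delicate because the exponent $p=\infty$ fails in the 2D Sobolev embedding; it is precisely here that the convex-domain hypothesis in Lemma~\ref{lem:regularity-biharmonic} is needed, and one instead bounds $\norm{J}_{-1}\leq\norm{\Delta\psi}_{0,4}\norm{\nabla\psi}_{0,4}$ using Ladyzhenskaya together with the Gagliardo--Nirenberg interpolation $\norm{\Delta\psi}_{0,4}\leq C\norm{\Delta\psi}^{1/2}\norm{\nabla\Delta\psi}^{1/2}$, and then applies Young's inequality to kick back the $\norm{\psi}_3$ factor from the left side. For the second claim $\delta\in(1,2]$, the inclusion reverses and $H^{-2+\delta}$ is sharper than $H^{-1}$; the strengthened assumption on $F_t$ permits a refinement of the differentiated energy identity used in Theorem~\ref{thm:qge_regularity-1} that produces a time-integrated bound for $\partial_t\Delta\psi$ in the required stronger norm, while the Jacobian estimate extends by the same duality argument once the higher regularity of $\nabla\psi$ supplied by the first claim is fed back into $\norm{\nabla\psi}_{0,p}$. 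Since only the weighted integral bound is asserted in this regime, the time integration absorbs the pointwise losses inherent in the fractional estimates. Throughout, the principal obstacle is closing the nonlinear bound on $J(\psi,\Delta\psi)$ in the correct negative fractional Sobolev norm using only quantities previously controlled.
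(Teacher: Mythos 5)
Your proposal follows essentially the same route as the paper for the main case: rewrite \eqref{eq:qge} as $\nu\Delta^2\psi = g$ with $g = \partial_t\Delta\psi - J(\psi,\Delta\psi)+\mu(\psi_x+F)$, invoke the biharmonic regularity of Lemma~\ref{lem:regularity-biharmonic}, control the linear terms via Theorem~\ref{thm:qge_regularity-1} and Lemma~\ref{lem:qge_prioribound}, and estimate $J(\psi,\Delta\psi)$ in $H^{-2+\delta}$ by duality, generalized H\"older and 2D Sobolev embedding; your exponent pair $p=2/(1-\delta)$, $q=2/\delta$ is the paper's $p=2/\sigma$, $q=2/(1-\sigma)$ with $\sigma=1-\delta$, and yields the same bound $\|J(\psi,\Delta\psi)\|_{-2+\delta}\le C\|\Delta\psi\|^2$. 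The one genuine divergence is the endpoint $\delta=1$: the paper bootstraps --- from $J\in H^{-(1+\sigma)}$ it first gets $\psi\in H^{3-\sigma}\hookrightarrow C^1(\bar\Omega)$ for $0<\sigma<1/2$, whence $\|J(\psi,\Delta\psi)\|_{-1}\le M\|\Delta\psi\|\,\|\psi\|_{1,\infty}$ and then $\psi\in H^3$ --- whereas you interpolate $\|\Delta\psi\|_{0,4}\le C\|\Delta\psi\|^{1/2}\|\nabla\Delta\psi\|^{1/2}$ and absorb the resulting half-power of $\|\psi\|_3$ by Young's inequality. Both close at the formal level, but note that your absorption presupposes the qualitative finiteness of $\|\psi\|_3$ which the paper's bootstrap is precisely designed to supply, so the paper's version is the more self-contained; the convexity hypothesis enters only through Lemma~\ref{lem:regularity-biharmonic} in either variant. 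For $\delta\in(1,2]$ you slightly overcomplicate matters: since $-2+\delta\le 0$ one has $\|\partial_t\Delta\psi\|_{-2+\delta}\le\|\partial_t\Delta\psi\|$ and $\|J(\psi,\Delta\psi)\|_{-2+\delta}\le\|J(\psi,\Delta\psi)\|$, so no refinement of the differentiated energy identity or ``stronger norm'' for $\partial_t\Delta\psi$ is required --- the weighted $L^2$-in-time bound of Theorem~\ref{thm:qge_regularity-1} together with $\|J(\psi,\Delta\psi)\|\le\|\nabla\psi\|_{0,\infty}\|\nabla\Delta\psi\|\le C\|\psi\|_3^2$ (fed by the first claim) already closes the weighted integral, which is exactly what the paper does.
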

\begin{proof}
    Note that from the main equation \eqref{eq:qge} with
    \begin{equation}\label{eq:psi-0}
        \nu \Delta^2 \psi(t) = g(t):= \partial_t \Delta \psi - J (\psi,\Delta \psi) + \mu (\psi_x + F),
    \end{equation}
    and elliptic regularity result
    \begin{align}\label{eq:negative-estimate}
        \nu \|\psi(t)\|_{2+\delta} \leq \nu \|\Delta^2 \psi(t)\|_{-2+\delta}
        &\leq \|\partial_t \Delta \psi(t)\|_{{-2+\delta}} + \|J(\psi,\Delta \psi)\|_{{-2+\delta}}\nonumber\\
        &+\mu \Big(\|\psi_x\|_{{-2+\delta}} + \|F(t)\|_{{-2+\delta}}\Big)\nonumber\\
        &\leq C \|\partial_t \nabla \psi(t)\| + \|J(\psi,\Delta \psi)\|_{{-2+\delta}}
        + \mu\Big(\|\nabla \psi\| + \|F(t)\|_{-2} \Big).
    \end{align}
    In order to estimate the second term on the right hand side of \eqref{eq:negative-estimate}, recall the definition of $J$.
    After we use generalized H\"older's inequality with $1/2+1/p+1/q=1$ where $p=2/\sigma$ and $q=2/(1-\sigma)$.
    \begin{align*}
        |(J(\psi,\Delta \psi), \varphi)(t)|
        &= |b (\psi(t);\psi(t),\varphi)|\\
        &\leq M  \|\Delta \psi(t)\|\|\psi(t)\|_{1,p} \|\varphi\|_{1,q}.
    \end{align*}
    Finally, the Sobolev imbedding theorem $H^1\subset L^p$ and $H^\sigma\subset L^q$ shows
    \begin{equation*}
        |(J(\psi,\Delta\psi,\varphi)(t))|\leq C \|\Delta \psi(t)\|^2 \|\varphi\|_{H^{1+\sigma}(\Omega)}.
    \end{equation*}
    Note that $J(\psi,\Delta \psi)(t) \in H^{-(1+\sigma)}(\Omega)$ and hence, $g(t)\in H^{-2+(1-\sigma)}(\Omega)$.
    Then, from elliptic regularity Lemma~\ref{lem:regularity-biharmonic}, we obtain $\psi(t) \in {H^{3-\sigma}(\Omega)}$.
    Since $H^{3-\sigma}$ with $0<\sigma <1/2$ is continuously embedded in $C^1(\bar{\Omega})$, there  holds
    \begin{equation*}
        |(J(\psi,\Delta \psi), \varphi)(t)|:= |b (\psi(t);\psi(t),\varphi)|
        \leq M  \|\Delta \psi(t)\|\|\psi(t)\|_{1,\infty} \|\varphi\|_{1}.
    \end{equation*}
    This implies $J(\psi,\Delta \psi)(t) \in H^{-1}(\Omega)$.
    Therefore, $g(t)\in H^{-1}(\Omega)$.
    Now, an application of elliptic regularity Lemma~\ref{lem:regularity-biharmonic} shows $\psi(t)\in H^3(\Omega).$

    For the estimate \eqref{eq:psi-0}, a use of the Theorem~\ref{thm:qge_regularity-1} with elliptic regularity yields
    \begin{align*}
        \beta e^{-2\alpha t} \int_{0}^t \| \psi(s)\|_{2+\delta}^2\ds &\leq C \beta e^{-2\alpha t} \int_{0}^t \|\Delta^2 \psi(s)\|^2\ds \nonumber\\
        &\leq C e^{-2\alpha t} \int_{0}^t \Big( \|\Delta \psi_t(s)\|^2 + \|J(\psi; \Delta\psi)(s)\|^2 + \mu^2 \|\psi_x\|^2 + \mu^2 \|F\|^2\Big)\ds\nonumber\\
        &\leq C \big( \alpha,\nu,\mu,\|\psi_0\|_3, \|F\|_{L^2(L^2)}, \|F_t\|_{L^2(H^{-1})}\big).
    \end{align*}
    This completes the rest of the proof.
\end{proof}

The following theorem focusses on the regularity results which will be needed in our error analysis.
\begin{theorem}\label{thm:qge_regularity-3}
    Let $\psi_0\in H^4(\Omega)\cap H^2_0(\Omega)$ and $F_t \in L^2(H^{-1}(\Omega)).$ Then there is a positive constant $C$ depending on $\|\psi_0\|_4, \|F_t\|_{-1}, \|F(0)\|, \mu$ and $\nu$ such that for all $t>0$ and for $1<\delta \leq 2$
    \begin{equation}
        \nu \Big(\|\psi(t)\|_{2+\delta}+ \|\partial_t \Delta \psi(t)\|^2 \Big)+ e^{-2\alpha t} \int_{0}^t e^{2\alpha s} \|\nabla \partial_t^2\psi(s)\|^2\ds \leq C.
    \end{equation}
Moreover, for $1/2<\delta \leq 1$
    \begin{equation*}
        \nu e^{-2\alpha t} \int_{0}^t e^{2\alpha s} \|\partial_t \psi(s)\|^2_{2+\delta}\ds \leq C.
    \end{equation*}
\end{theorem}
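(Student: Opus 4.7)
The plan is to combine a higher-order energy estimate, obtained by differentiating the weak formulation in $t$ and testing with $\chi=\partial_{tt}\psi$, with the elliptic regularity of Lemma~\ref{lem:regularity-biharmonic} applied to \eqref{eq:qge} and to its time derivative, precisely in the spirit of Theorems~\ref{thm:qge_regularity-1} and \ref{thm:qge_regularity-2}. All computations are carried out on the Galerkin approximation $\psi_m$ and the bounds are then passed to the limit. Under the present hypotheses $\psi_0\in H^4\cap H^2_0$, $F_t\in L^2(H^{-1})$ and $F(0)\in L^2$, the assumptions of Theorem~\ref{thm:qge_regularity-2} are already satisfied, so $\|\psi(t)\|_3$ (pointwise) and $e^{-2\alpha t}\int_0^t e^{2\alpha s}\|\psi(s)\|_3^2\ds$ are controlled; in particular $\psi\in H^3\hookrightarrow W^{1,\infty}$ uniformly in $t$, a fact that will be used repeatedly.

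For the pointwise bound on $\|\partial_t\Delta\psi(t)\|$ and the weighted integral of $\|\nabla\partial_{tt}\psi\|^2$, I differentiate \eqref{eq:Galerkin} once in $t$ and set $\chi=\partial_{tt}\psi_m$. Using the skew-symmetry of $b(\psi_m;\cdot,\cdot)$ and $b_0(\cdot,\cdot)$ in the last two slots, this produces
\begin{equation*}
\|\nabla\partial_{tt}\psi_m\|^2+\frac{\nu}{2}\frac{d}{dt}\|\Delta\partial_t\psi_m\|^2 = -b(\partial_t\psi_m;\psi_m,\partial_{tt}\psi_m)-b(\psi_m;\partial_t\psi_m,\partial_{tt}\psi_m)-\mu b_0(\partial_t\psi_m,\partial_{tt}\psi_m)+\mu(F_t,\partial_{tt}\psi_m).
\end{equation*}
After swapping $\psi_m$ into the middle slot via skew-symmetry, the third trilinear bound yields $|b(\partial_t\psi_m;\psi_m,\partial_{tt}\psi_m)|\le M\|\psi_m\|_{1,\infty}\|\partial_t\Delta\psi_m\|\|\nabla\partial_{tt}\psi_m\|$, while $b(\psi_m;\partial_t\psi_m,\partial_{tt}\psi_m)$, whose third argument lives only in $H^1_0$, is controlled by a generalized H\"older / Ladyzhenskaya split $\le C\|\Delta\psi_m\|_{0,4}\|\nabla\partial_t\psi_m\|^{1/2}\|\Delta\partial_t\psi_m\|^{1/2}\|\nabla\partial_{tt}\psi_m\|$ together with $\|\Delta\psi_m\|_{0,4}\le C\|\psi_m\|_3$. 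Young's inequality absorbs $\|\nabla\partial_{tt}\psi_m\|^2$ by kick-back, after which the now-familiar procedure of multiplying by $e^{2\alpha t}$, integrating in time and applying Gronwall's lemma closes the estimate, the crucial integrability $\int_0^t e^{2\alpha s}\|\psi_m\|_3^2\ds<\infty$ being supplied by Theorem~\ref{thm:qge_regularity-2}. The initial datum $\|\partial_t\Delta\psi_m(0)\|$ is bounded by reading the equation at $t=0$, $-\partial_t\Delta\psi_m(0)=\nu\Delta^2\psi_m(0)+J(\psi_m(0),\Delta\psi_m(0))-\mu\partial_x\psi_m(0)-\mu F(0)$, each term of which lies in $L^2$ thanks to $\psi_0\in H^4$ and $F(0)\in L^2$.

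The two spatial regularity statements then follow from Lemma~\ref{lem:regularity-biharmonic}. For $1<\delta\le 2$ I rewrite \eqref{eq:qge} as $\nu\Delta^2\psi=g$ with $g=\partial_t\Delta\psi-J(\psi,\Delta\psi)+\mu(\partial_x\psi+F)$; every piece of $g$ is now in $L^2(\Omega)$ uniformly in $t$ by the previous step, by $\psi\in H^3$ (so $J(\psi,\Delta\psi)\in L^2$), and by the hypotheses, hence $\|\psi(t)\|_{2+\delta}\le C\|g\|_{\delta-2}\le C$. For the \emph{moreover} claim with $1/2<\delta\le 1$, I apply the same lemma to the time-differentiated equation viewed as an elliptic problem for $\partial_t\psi$,
\begin{equation*}
\nu\Delta^2(\partial_t\psi)=\mu F_t-\partial_{tt}\Delta\psi+\mu\partial_x\partial_t\psi-J(\partial_t\psi,\Delta\psi)-J(\psi,\partial_t\Delta\psi),
\end{equation*}
bound the right-hand side in $H^{-1}(\Omega)$ using $F_t\in L^2(H^{-1})$, the exponentially weighted $L^2(H^1)$-control of $\partial_{tt}\psi$ produced above, and duality/Sobolev estimates on the two Jacobians via $\psi,\partial_t\psi\in H^2$, and integrate against the weight $e^{2\alpha s}$. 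The decisive difficulty is the estimate of $b(\psi;\partial_t\psi,\partial_{tt}\psi)$ in the energy step, since $\partial_{tt}\psi$ sits only in $H^1_0$ and the Ladyzhenskaya bound on $\|\nabla\cdot\|_{0,4}$ is unavailable for it; the $L^4$-$L^4$-$L^2$ H\"older split together with the uniform $H^3$ bound of Theorem~\ref{thm:qge_regularity-2} is the essential device that makes the Gronwall argument close.
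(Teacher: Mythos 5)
Your proposal follows essentially the same route as the paper's proof: differentiate the equation once in time, test with $e^{2\alpha t}\partial_t^2\psi$ (the paper does this formally, you on the Galerkin level), bound the two trilinear terms exactly as you do --- the $W^{1,\infty}$ control of $\psi$ coming from the $H^3$ bound of Theorem~\ref{thm:qge_regularity-2} for $b(\partial_t\psi;\psi,\partial_{tt}\psi)$ and a H\"older--Ladyzhenskaya split for $b(\psi;\partial_t\psi,\partial_{tt}\psi)$ --- then kick back, apply Gronwall, read $\|\partial_t\Delta\psi(0)\|$ off the equation at $t=0$ using $\psi_0\in H^4$, and deduce the spatial statements from Lemma~\ref{lem:regularity-biharmonic} applied to the equation and its time derivative. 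The differences are only cosmetic (a harmless sign slip in the time-differentiated equation, and your explicit write-up of the elliptic-regularity step for $\partial_t\psi$, which the paper compresses into ``proceed as in Theorem~\ref{thm:qge_regularity-2}''), so the argument matches the paper's.
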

\begin{proof}
    Differentiate equation \eqref{eq:qge} with respect to time.
    After multiplying $e^{2\alpha t} \partial_t^2 \psi$ to the both sides and integrate over $\Omega$, we arrive at
    \begin{align}\label{eq:psi-t-estimate-2-1}
        e^{2\alpha t} \|\partial_t^2\nabla \psi(t)\|^2 &+ \frac{\nu}{2} \frac{d}{dt} \Big( e^{2\alpha t} \|\partial_t \Delta {\psi}(t)\|^2\Big)= \alpha e^{2\alpha t} \|\partial_t \Delta \psi(t)\|^2\nonumber\\
        &- e^{\alpha t}  \big( b(\partial_t\psi;\psi, e^{\alpha t} \partial_t^2 \psi)
        + b(\psi;\partial_t\psi, e^{\alpha t} \partial_t^2 \psi) \big)\nonumber\\
        &- \mu  b_0( e^{\alpha t} {\partial_t \psi}, e^{\alpha t} \partial_t^2 \psi) + \mu   (e^{\alpha t} \partial_t {F}, e^{\alpha t} \partial_t^2\psi).
    \end{align}
    The second term on the right hand side is bounded by
    \begin{align} \label{eq:b-tt-1}
       - e^{\alpha t}  \big( b(\partial_t \psi;\psi, e^{\alpha t} \partial_t^2 \psi)
        + b(\psi;\partial_t \psi, e^{\alpha t} \partial_t^2 \psi) \big)\leq C \|\partial_t \Delta \psi \|^2  \|\psi\|^2_3 + \frac{1}{6}  e^{2\alpha t} \|\partial_t^2 \nabla \psi(t)\|^2.
    \end{align}
    For the last two terms are bounded by
    \begin{align}\label{eq:b-0-F-t-1}
        - \mu  \Big( b_0( e^{\alpha t} {\partial_t \psi}, e^{\alpha t} \partial_t^2\psi)
       &-(e^{\alpha t}\partial_t {F}, e^{\alpha t} \partial_t^2 \psi)\Big)\nonumber\\
       &\leq \frac{3\mu^2}{\nu}   e^{2\alpha t} \big( \|\partial_t \nabla \psi \|^2 + \|\partial_t F\|^2_{-1}\big)
       + \frac{\nu}{3} e^{2\alpha t} \|\partial_t^2 \nabla\psi\|^2.
    \end{align}
    On substitution of \eqref{eq:b-tt-1} and \eqref{eq:b-0-F-t-1} in \eqref{eq:psi-t-estimate-2-1} with kick back argument, then integrate with respect to time with Theorems~\ref{thm:qge_regularity-1}--\ref{thm:qge_regularity-2} with $\|\Delta \psi_t(0)\| \leq C (\|\psi_0\|_{4})$ yields
    \begin{align}
        &e^{-2\alpha t} \int_{0}^t e^{2\alpha s} \|\partial_t^2 \nabla \psi (s)\|^2 \ds + \nu \|\partial_t \Delta {\psi} (t)\|^2 \leq \nu \|\partial_t \Delta \psi (0\|^2 \nonumber\\
        &\quad+ C(\alpha,\nu,\mu)  e^{-2\alpha t} \int_{0}^t e^{2\alpha s} \Big( \|\partial_t \Delta \psi (s)\|^2 \big(1+\|\psi\|^2_3\big)
        + \|\partial_t \nabla \psi\|^2 + \|\partial_t F\|^2_{-1}\Big)\ds.
    \end{align}
    As in the proof of Theorem~\ref{thm:qge_regularity-2}, using the estimates of the Theorem~\ref{thm:qge_regularity-1}, we complete the rest of the proof.
\end{proof}

The last theorem of this subsection deals on the regularity results to be used subsequently.
\begin{theorem}\label{thm:qge_regularity-4}
    Let $\psi_0\in H^4(\Omega)\cap H^2_0(\Omega)$, $F_t \in H^{-1}(\Omega)$, and $F_{tt}\in H^{-2}(\Omega)$.
    Then there is a positive constant $C$ depending on $\|\psi_0\|_4,\; \|F_t\|_{-1},\; \|F_{tt}\|_{-2},\; \|F(0)\|,\; \mu$ and $\nu$ such that for all $t>0$ and for $1/2<\delta \leq 1$
    \begin{equation}\label{eq:qge_regularity-estimate-3}
        \nu \tau(t)\Big(\|\partial_t \psi(t)\|_{2+\delta}^2+ \|\partial^2_{t} \nabla \psi(t)\|^2 \Big)+ e^{-2\alpha t} \int_{0}^t e^{2\alpha s}\tau(s) \|\partial_{t}^2\Delta \psi (s)\|^2\ds \leq C,
    \end{equation}
    where $\tau(t)= \min \{t,1\}.$ Moreover, for $1\leq \delta \leq 2$
    \begin{equation*}
        \nu e^{-2\alpha t} \int_{0}^t e^{2\alpha s}\tau(s) \|\partial_{t}^2 \psi(s)\|^2_{2+\delta}\ds \leq C.
    \end{equation*}
\end{theorem}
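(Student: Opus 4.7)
The plan is to differentiate the variational formulation~\eqref{eq:weakform} twice in time, test with $\chi=\partial_t^2\psi$, and then multiply by the weight $\tau(t)e^{2\alpha t}$; the weight is essential because the stated data regularity does not control $\partial_t^2\psi(0)$, but $\tau(0)=0$ annihilates this inaccessible contribution. Using $\partial_t^2 b(\psi;\psi,\chi)=b(\partial_t^2\psi;\psi,\chi)+2b(\partial_t\psi;\partial_t\psi,\chi)+b(\psi;\partial_t^2\psi,\chi)$ together with the skew-symmetries $b(\psi;\partial_t^2\psi,\partial_t^2\psi)=0$ and $b_0(\partial_t^2\psi,\partial_t^2\psi)=0$, the base identity becomes
\begin{equation*}
    \tfrac{1}{2}\tfrac{d}{dt}\|\nabla\partial_t^2\psi\|^2+\nu\|\Delta\partial_t^2\psi\|^2
    = -b(\partial_t^2\psi;\psi,\partial_t^2\psi)-2b(\partial_t\psi;\partial_t\psi,\partial_t^2\psi)+\mu(\partial_t^2 F,\partial_t^2\psi).
\end{equation*}

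For the first trilinear term I would apply property~(iii) of $b$ with the Ladyzhenskaya inequality and Young's inequality with exponents $(4/3,4)$, producing
\begin{equation*}
    |b(\partial_t^2\psi;\psi,\partial_t^2\psi)|\le \tfrac{\nu}{4}\|\Delta\partial_t^2\psi\|^2+C\nu^{-3}\|\nabla\psi\|^2\|\Delta\psi\|^2\|\nabla\partial_t^2\psi\|^2,
\end{equation*}
and for the second, using the second bound of (iii) with $ab\le \tfrac{\nu}{8}a^2+(2/\nu)b^2$,
\begin{equation*}
    2|b(\partial_t\psi;\partial_t\psi,\partial_t^2\psi)|\le \tfrac{\nu}{4}\|\Delta\partial_t^2\psi\|^2+C\nu^{-1}\|\Delta\partial_t\psi\|^4,
\end{equation*}
alongside $\mu|(\partial_t^2 F,\partial_t^2\psi)|\le \mu^2\nu^{-1}\|\partial_t^2 F\|_{-2}^2+\tfrac{\nu}{4}\|\Delta\partial_t^2\psi\|^2$. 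Multiplying by $\tau(t)e^{2\alpha t}$, rewriting the left-hand time derivative as $\tfrac{1}{2}\tfrac{d}{dt}(\tau e^{2\alpha t}\|\nabla\partial_t^2\psi\|^2)-\tfrac{1}{2}(\tau'+2\alpha\tau)e^{2\alpha t}\|\nabla\partial_t^2\psi\|^2$, and absorbing the dissipation on the left, I would integrate over $(0,t)$. The correction $\int_0^t \tau'(s)e^{2\alpha s}\|\nabla\partial_t^2\psi\|^2\,ds$ is supported in $[0,1]$ since $\tau'$ vanishes there, hence controlled by Theorem~\ref{thm:qge_regularity-3}. A Gronwall argument with coefficient $C\nu^{-3}\|\nabla\psi\|^2\|\Delta\psi\|^2$, which is $L^1$ in time by Lemma~\ref{lem:qge_prioribound}, plus the standalone forcing $C\nu^{-1}\|\Delta\partial_t\psi\|^4$, bounded in $L^1$ by combining the uniform-in-$t$ estimate on $\|\Delta\partial_t\psi\|$ (Theorem~\ref{thm:qge_regularity-3}) with the $L^2$ bound from Theorem~\ref{thm:qge_regularity-1}, yields the first assertion of \eqref{eq:qge_regularity-estimate-3}.

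For the $H^{2+\delta}$ bounds I would differentiate~\eqref{eq:qge} once in time and isolate the biharmonic part,
\begin{equation*}
    \nu\Delta^2\partial_t\psi=\partial_t^2\Delta\psi-\partial_t J(\psi,\Delta\psi)+\mu\partial_x\partial_t\psi+\mu\partial_t F,
\end{equation*}
so that Lemma~\ref{lem:regularity-biharmonic} gives $\nu\|\partial_t\psi\|_{2+\delta}\le C\|\Delta^2\partial_t\psi\|_{-2+\delta}$. The linear pieces $\partial_x\partial_t\psi$ and $\partial_t F$ are immediate; $\|\partial_t^2\Delta\psi\|_{-1}\le C\|\nabla\partial_t^2\psi\|$ is already weighted by $\tau$ via the first part; and $\partial_t J(\psi,\Delta\psi)=J(\partial_t\psi,\Delta\psi)+J(\psi,\partial_t\Delta\psi)$ is estimated via generalized H\"older and Sobolev embedding exactly as for $J(\psi,\Delta\psi)$ in Theorem~\ref{thm:qge_regularity-2}, using the bounds on $\|\psi\|_3$ and $\|\Delta\partial_t\psi\|$ from Theorems~\ref{thm:qge_regularity-1}--\ref{thm:qge_regularity-3}. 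Multiplying by $\tau(t)e^{2\alpha t}$, integrating, and repeating the argument on the twice-differentiated equation delivers the $1\le\delta\le 2$ estimate.

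The main obstacles are (i) balancing the $\nu^{-3}$ and $\nu^{-1}$ scalings in the two trilinear bounds so that the resulting Gronwall coefficient is simultaneously $L^1$ in time and uniformly controlled as $t\to\infty$, which forces careful use of both the $L^2(H^2)$ bound on $\psi$ and the improved regularity of $\partial_t\psi$ from Theorem~\ref{thm:qge_regularity-3}; and (ii) verifying that the cutoff $\tau$ simultaneously eliminates the inaccessible initial datum $\partial_t^2\psi(0)$ and produces the correct weight on the right-hand side when elliptic regularity is invoked for the $H^{2+\delta}$ estimate on $\partial_t\psi$.
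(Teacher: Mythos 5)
Your proposal is correct and follows essentially the same route as the paper: differentiate the equation twice in time, test with the $\tau(t)e^{2\alpha t}$-weighted $\partial_t^2\psi$ so that $\tau(0)=0$ removes the inaccessible datum $\partial_t^2\psi(0)$, kill $b(\psi;\partial_t^2\psi,\partial_t^2\psi)$ and $b_0$ by skew-symmetry, absorb the dissipation, control the $(\tau'+2\alpha\tau)$ correction and the remaining trilinear terms via Theorems~\ref{thm:qge_regularity-1}--\ref{thm:qge_regularity-3}, and finish the $H^{2+\delta}$ bounds by elliptic regularity as in Theorem~\ref{thm:qge_regularity-2}. The only differences are cosmetic (your explicit Gronwall step and the particular H\"older/Young exponents versus the paper's direct substitution of the earlier estimates), so no further comment is needed.
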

\begin{proof}
    On differentiating the equation \eqref{eq:qge} with respect to time twice and then forming an inner-product with $-\tau(t) e^{2\alpha t} \partial^2_t \psi $, using the property $(i)$ of the trilinear form $b(\cdot;\cdot,\cdot)$ and property of $b_0(\cdot,\cdot)$, it now follows that
    \begin{align*}
        &\frac{1}{2} \frac{d}{dt} \Big(\tau(t) e^{2\alpha t} \|\partial_t^2 \nabla \psi (t)\|^2\Big) + {\nu}  \tau(t) e^{2\alpha t} \|\partial_t^2\Delta {\psi} (t)\|^2 = (\alpha \tau(t) + \tau'(t)) e^{2\alpha t} \|\partial_t^2 \nabla \psi (t)\|^2 \nonumber\\
        &\quad - \tau(t) e^{\alpha t} \big( b(\partial_t^2\psi;\psi, e^{\alpha t} \partial^2_t \psi)
        + 2 b(\partial_t\psi;\partial_t \psi, e^{\alpha t} \partial^2_t\psi) \big)\nonumber\\
        &\quad + \mu   \tau(t) (e^{\alpha t} \partial_t^2 {F}, e^{\alpha t} \partial_t^2 \psi).
    \end{align*}
    For the first term on the right hand side of \eqref{eq:psi-t-estimate-2-1}, note that $\tau(t) \leq 1,$ and $\tau'\leq 1.$ The second term on the right hand side is bounded by
    \begin{align} \label{eq:b-tt}
        -\tau(t)e^{\alpha t} \big( b(\partial_t^2\psi;\psi, e^{\alpha t} \partial^2_t \psi)
            &+ 2 b(\partial_t\psi;\partial_t \psi, e^{\alpha t} \partial^2_t\psi)\big) \leq \frac{1}{3} \tau(t) e^{2\alpha t} \|\partial_t^2 \Delta \psi (t)\|^2\nonumber\\
            &+ C e^{2\alpha t} \big(\|\Delta \psi_t\|^2 \|\psi\|^2_3 + \|\partial_t^2 \nabla \psi\|^2+ \|\partial_t \psi\|_3\big).
    \end{align}
    The third term is bounded by
    \begin{align}\label{eq:b-0-F-t}
        \mu\tau(t)\einner{e^{\alpha t} \partial_t^2 {F}, e^{\alpha t} \partial_t^2 \psi} \leq \frac{3\mu^2}{2\nu}\tau(t) e^{2\alpha t}\|\partial_t^2F\|^2_{-2}
        + \frac{\nu}{6} \tau(t) e^{2\alpha t}\|\partial_t^2\Delta\psi\|^2.
    \end{align}
    On substitution of \eqref{eq:b-tt} and \eqref{eq:b-0-F-t} in \eqref{eq:psi-t-estimate-2-1}, use kick back argument and integration with respect to time with multiplication of $e^{-2\alpha t}$ to obtain
    \begin{align}
        \tau(t) \|\partial_t^2 \nabla \psi(t)\|^2 &+ e^{-2\alpha t} \nu \int_{0}^t \tau(s) e^{2\alpha s}
        \|\partial_t^2 \Delta \psi(s)\|^2 \ds 
        \leq 2(\alpha + 1)e^{-2\alpha t} \int_{0}^{t} e^{2\alpha s} \|\partial_t^2 \nabla \psi(s)\|^2\ds\nonumber\\
        &+ C(\alpha,\nu,\mu)  e^{-2\alpha t} \int_{0}^t e^{2\alpha s} \Big( \|\partial_t^2\nabla \psi (s)\|^2 \|\psi\|_3^2 
        + \|\partial_t \Delta \psi \|^2  \|\partial_t\psi\|^2_3\Big)\ds\nonumber\\
        &+C \big(\frac{\mu^2}{\nu}\big)  e^{-2\alpha t} \int_{0}^t e^{2\alpha s} \|\partial_t^2 F(s)\|^2_{-2} \ds.
    \end{align}
    A use of the estimates in Theorems~\ref{thm:qge_regularity-1}--\ref{thm:qge_regularity-3}
completes the proof of the estimate \eqref{eq:qge_regularity-estimate-3}.
Proceed in a similar manner as in the proof of the Theorem~\ref{thm:qge_regularity-2} to complete the rest of the proof.
\end{proof}


\begin{remark}
    The results of Theorem~\ref{thm:qge_regularity-4} hold for $\tau=1$ under higher regularity on the initial data $\psi_0$, that is, $\psi_0\in H^6(\Omega) \cap H^2_0(\Omega)$
    under some compatibility conditions.
\end{remark}
\begin{remark}\label{rmk:decaying}
    If $F=0$, then the following regularity results hold
    \begin{align*}
        \norm{\psi (t)}_{2+\delta}&\leq Ce^{-\alpha t},\\
        \int_0^te^{2\alpha t}(\norm{\partial_t\psi (t)}_{2+\delta}^2+\norm{\psi (t)}_{2+\delta})&\leq C.
    \end{align*}
    %
    If $F$ decays exponentially, then also the exponential decay property holds. 
    Moreover, if $F\in L^2(0,T;H^{-1}(\Omega))$ and $\partial_tF\in L^2(0,T;H^{-2}(\Omega))$, then error is bounded for all $t> 0$.
\end{remark}
\subsection{Absorbing set and global attractor.}
This subsection is on the study of the dynamics of the system \eqref{eq:qge}--\eqref{eq:bc} under the assumption that $F$ is time independent.

Let $S(t)$ for $t\geq0$ be the solution operator form which takes $\psi_0\in H^1_0(\Omega)$ into $\psi(t)$.
This family $\{S(t)\}_{t\geq0}$ forms a semigroup of operators on $H^1_0(\Omega)$.
Below, we discuss the main result of this subsection.
\begin{lemma}\label{lem:attractor-H1}
    With $H=H^1_0(\Omega),$ 
there exists $\rho_0>0$ such that the ball $B_H(0,\rho_0$ 
is absorbing in $H$ 
in the sense that for $\rho>0$, there exists $t^\star(\rho)>0$ such that for $t\geq t^\star(\rho)$, $S(t)B_H(0,\rho)\subset B_H(0,\rho_0).$ 
    %
\end{lemma}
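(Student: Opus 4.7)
The plan is to derive the absorbing ball directly from the uniform-in-time \textit{a priori} bound already obtained in Remark~\ref{remark:2.2}, by reading the asymptotic part of that estimate as the radius of the absorbing set and the transient part as the time required to enter it.

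First I fix $\alpha\in (0,\nu\bar{\lambda}_1/2)$ (for concreteness one may take $\alpha=\nu\bar{\lambda}_1/4$), so that the bound of Remark~\ref{remark:2.2} is at our disposal for any initial data in $H$. Dropping the nonnegative integral term in that bound, I get, for $\psi(t)=S(t)\psi_0$ with $\psi_0\in B_H(0,\rho)$,
\begin{equation*}
\|\nabla\psi(t)\|^2 \;\leq\; e^{-2\alpha t}\|\nabla\psi_0\|^2 + \frac{\mu^2}{2\nu\alpha}\bigl(1-e^{-2\alpha t}\bigr)\|F\|_{-2}^2 \;\leq\; e^{-2\alpha t}\rho^2 + \frac{\mu^2}{2\nu\alpha}\|F\|_{-2}^2.
\end{equation*}

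Next I choose the radius $\rho_0$ of the candidate absorbing ball to be strictly larger than the asymptotic bound above; for instance set
\begin{equation*}
\rho_0^2 \;:=\; \frac{\mu^2}{\nu\alpha}\|F\|_{-2}^2,
\end{equation*}
so that $\rho_0^2-\frac{\mu^2}{2\nu\alpha}\|F\|_{-2}^2 = \frac{\mu^2}{2\nu\alpha}\|F\|_{-2}^2>0$ (if $F\equiv 0$ one may take any $\rho_0>0$ by Lemma~\ref{lem:qge_prioribound}). Requiring $\|\nabla\psi(t)\|^2\leq \rho_0^2$ then reduces to $e^{-2\alpha t}\rho^2 \leq \rho_0^2-\frac{\mu^2}{2\nu\alpha}\|F\|_{-2}^2$, which I solve explicitly to define
\begin{equation*}
t^\star(\rho)\;:=\;\max\!\left\{0,\;\frac{1}{2\alpha}\log\!\left(\frac{\rho^2}{\rho_0^2-\tfrac{\mu^2}{2\nu\alpha}\|F\|_{-2}^2}\right)\right\}.
\end{equation*}
For every $t\geq t^\star(\rho)$ and every $\psi_0\in B_H(0,\rho)$ the inequality above gives $\|\nabla\psi(t)\|\leq \rho_0$, i.e. $S(t)B_H(0,\rho)\subset B_H(0,\rho_0)$, which is the absorbing property.

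I do not foresee any real obstacle: the whole proof is a one-line consequence of Remark~\ref{remark:2.2} once one notices that its right-hand side splits into a term decaying exponentially in $t$ (carrying all the dependence on $\psi_0$) and a term depending only on the data $F$, $\mu$, $\nu$, $\alpha$. The only minor care needed is the case $F\equiv 0$, where Lemma~\ref{lem:qge_prioribound} shows $\|\nabla\psi(t)\|\leq e^{-\alpha t}\|\nabla\psi_0\|$ and hence any $\rho_0>0$ works with $t^\star(\rho)=\frac{1}{\alpha}\log(\rho/\rho_0)_+$; this is consistent with the formula above in the limit $\|F\|_{-2}\to 0$ after a trivial modification of $\rho_0$.
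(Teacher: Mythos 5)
Your proposal is correct and follows essentially the same route as the paper: both start from the uniform bound of Remark~\ref{remark:2.2} for time-independent $F$, take $\rho_0^2=\frac{\mu^2}{\nu\alpha}\norm{F}_{-2}^2$ (twice the asymptotic value) as the absorbing radius, and solve the resulting exponential inequality explicitly for $t^\star(\rho)$. The only differences are cosmetic — you bound $e^{-2\alpha t}\norm{\nabla\psi_0}^2$ slightly more crudely and handle the $F\equiv 0$ and small-$\rho$ cases a bit more carefully via the $\max\{0,\cdot\}$, whereas the paper treats the small-radius case as a separate trivial remark.
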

\begin{proof}
    Let $\psi_0\in B_H(0,\rho_0)$.
Then, from Lemma~\ref{lem:qge_prioribound}, we obtain
    \begin{equation*}
        \norm{\nabla\psi(t)}^2\leq e^{-2\alpha t}\norm{\nabla \psi_0}^2+\frac{\mu^2}{2\nu\alpha}\norm{F}_{-2}^2(1-e^{-2\alpha t}).
    \end{equation*}
    With $\rho^2=\frac{\mu^2}{\nu\alpha}\norm{F}_{-2}^2$, it follows that
    \begin{equation*}
        \norm{\nabla\psi(t)}^2\leq e^{-2\alpha t}(\rho_0^2-\frac{1}{2}\rho^2)+\frac{1}{2}\rho^2.
    \end{equation*}
    Choosing
    \begin{equation*}
        t\geq\frac{1}{\alpha}\log\left(\frac{2\rho_0^2-\rho^2}{\rho^2}\right)=t^\star(\rho).
    \end{equation*}
    We now obtain $\norm{\nabla\psi(t)}^2\leq \rho^2$, that is, $S(t)B_H(0,\rho)\subset B_H(0,\rho_0)$.
    For $\rho_0^2\leq \rho/2$, the result follows trivially for all $t>0$.
    Therefore, $B_H(0,\rho_0)$ is an absorbing set in $H$.
    This proves the theorem.
\end{proof}

\section{Finite element method}\label{sec:fem}
In this section, $C^1$-conforming FEM is applied to problem \eqref{eq:weakform}.
Then, for the corresponding semi-discrete section, we discuss the existence of a discrete attractor.
Let $\mathcal{T}_h$ be a shape regular partition of $\overline{\Omega}$ where $h=\max{K\in\mathcal{T}_h}h_K$ with $h_K=\text{diam}(K)$.
A general $C^1$-conforming FE space is defined by
\begin{equation*}
    S_h\subset C^1(\Omega)\cap\mathcal{P},\quad S_h^0=\{v\in S_h:\pn v= v=0\text{ on }\partial\Omega\}.
\end{equation*}
where $\mathcal{P}$ is a piecewise polynomial space.
We say $S_h$ is of degree $k$ if $S_h$ consists of piecewise polynomial up to degree $k\geq 3$.
\begin{remark}
    Each of $C^1$-finite element space leads to different definition for $\mathcal{T}_h$, $\mathcal{P}$, and $k$.
    \begin{itemize}
        \item When $\mathcal{T}_h$ is a triangular partition and $\mathcal{P}$ is a piecewise cubic polynomial space over a collection of sub-triangles of $\mathcal{T}_h$ with $k=3$, $S_h$ is a Hsieh-Clough-Tocher (HCT) element.
        \item When $\mathcal{T}_h$ is a triangular partition and $\mathcal{P}$ is piecewise quintic polynomial space over $\mathcal{T}_h$ with $k=5$, $S_h$ is an Argyris element.
        \item When $\mathcal{T}_h$ is a rectangular partition and $\mathcal{P}$ is piecewise bi-cubic polynomial space over $\mathcal{T}_h$ with $k=3$, $S_h$ is a Bogner-Fox-Schmit (BFS) element.
    \end{itemize}
\end{remark}
Denote $I_h:H^2_0(\Omega)\rightarrow S_h^0$ as the interpolation operator with the following property:
For $v\in H^{2+\delta}(\Omega)\cap H^2_0(\Omega)$, there is a positive constant $C$, independent of $h$, such that for $\delta >1/2$
\begin{equation}\label{estimate:interpolation}
    \norm{v-I_hv}_j\leq Ch^{\min(k+1,2+\delta)-j}\norm{v}_{2+\delta},\ j=0,1,2.
\end{equation}


The semi-discrete problem after applying the $C^1$-conforming FEM is to seek $\psi_h(t)\in S_h^0$ such that
\begin{equation}\label{eq:discprob}
    (\partial_t\nabla\psi_h,\nabla\chi)+\nu a(\psi_h,\chi)+b(\psi_h;\psi_h,\chi)+\mu b_0(\psi_h,\chi)=\mu (F,\chi)\quad \forall\chi\in S_h^0,\;t\in(0,T],
\end{equation}
with $\psi_h(0)=\psi_{0,h}\in S_h^0$ to be defined later.
Since $S_h^0$ is finite dimensional, \eqref{eq:discprob} leads to a system of nonlinear ODEs.
As $b(\cdot;\cdot,\cdot)$ is locally Lipschitz function, an application of Picard's theorem yields the existence of a unique local discrete solution $\psi_h(t)$ for $t\in(0,t_h^\star)$ for some $t_h^\star>0$.
For applying continuation argument so that solution $\psi_h(t)$ can be continued for all $t\in (0,T]$, we need to derive an \textit{a priori} bound on $\psi_h(t)$ for all $t\in (0,T]$.
\begin{lemma}[\textit{A priori} bound]\label{lem:discapriori}
    Let $\psi_{0,h}\in S_h^0$ be the initial condition for the discrete problem such that $\norm{\nabla \psi_{0,h}} \leq C\norm{\nabla \psi_0}$.
Then, for all $t\in (0,T]$ and for $0<\alpha<\frac{\nu\lambda_1}{2}$, the following estimate holds:
    \begin{equation}\label{eq:estimate-semi-discrete-1}
        \norm{\nabla\psi_h(t)}^2+ \beta e^{-2\alpha t}\int_0^te^{2\alpha t}\norm{\Delta\psi_h(s)}^2\ds\leq C e^{-2\alpha t}\norm{\nabla \psi_{0}}^2+\frac{\mu^2}{\nu}\norm{F}_{L^2(H^{-2})}^2, 
    \end{equation}
    where $\beta=(\nu-\frac{2\alpha}{\lambda_1})>0$.
    Moreover,
    \begin{align}
        \label{eq:estimate-semi-discrete-2}
        \mathop {\lim \sup}_{t\rightarrow \infty}\|\Delta \psi_h(t) \|\leq \frac{\mu^2}{\nu }
        \|F\|_{L^2(H^{-2})}.
    \end{align}

\end{lemma}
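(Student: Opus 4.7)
The proof will mirror the continuous a priori bound in Lemma~\ref{lem:qge_prioribound} almost step for step, since $S_h^0 \subset H^2_0(\Omega)$ means every property used there is inherited by the discrete space. The plan is to test \eqref{eq:discprob} with the admissible choice $\chi = e^{2\alpha t}\psi_h(t) \in S_h^0$. Using $b(\psi_h;\psi_h,\psi_h)=0$ and $b_0(\psi_h,\psi_h)=0$, the nonlinear and rotation terms drop out, leaving
\[
\tfrac{1}{2}\tfrac{d}{dt}\bigl(e^{2\alpha t}\|\nabla\psi_h\|^2\bigr) + \nu e^{2\alpha t}\|\Delta\psi_h\|^2 - \alpha e^{2\alpha t}\|\nabla\psi_h\|^2 \leq \mu e^{2\alpha t}\|F\|_{-2}\|\Delta\psi_h\|.
\]
I would then use Young's inequality $\mu\|F\|_{-2}\|\Delta\psi_h\| \leq \tfrac{\mu^2}{2\nu}\|F\|_{-2}^2 + \tfrac{\nu}{2}\|\Delta\psi_h\|^2$ to kick the last term back into the left-hand side.

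The Poincar\'e-type estimate $\|\nabla v\| \leq (1/\sqrt{\bar{\lambda}_1})\|\Delta v\|$ holds for every $v \in H^2_0(\Omega)$, and thus for every $v \in S_h^0$ without any discrete inverse inequality. Applying this to $\alpha\|\nabla\psi_h\|^2$ produces the factor $\beta = \nu - 2\alpha/\bar{\lambda}_1 > 0$ (which is positive by the hypothesis $\alpha < \nu\bar{\lambda}_1/2$). Integrating in time from $0$ to $t$, multiplying through by $e^{-2\alpha t}$, and invoking the assumption $\|\nabla\psi_{0,h}\| \leq C\|\nabla\psi_0\|$ then yields \eqref{eq:estimate-semi-discrete-1}. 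The bound is uniform in $t$, so it immediately gives a global-in-time existence result for the ODE system via the standard continuation argument.

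For \eqref{eq:estimate-semi-discrete-2}, I would specialize to time-independent $F$, which allows the right-hand side integral $\int_0^t e^{2\alpha s}\|F\|_{-2}^2\,ds$ to be evaluated as $(e^{2\alpha t}-1)/(2\alpha)\|F\|_{-2}^2$, exactly mimicking Remark~\ref{remark:2.2}. I would then take the limit superior and apply the L'Hospital-type identity from Remark~\ref{remark:2.4},
\[
\limsup_{t\to\infty} e^{-2\alpha t}\int_0^t e^{2\alpha s}\|\Delta\psi_h(s)\|^2\,ds = \tfrac{1}{2\alpha}\limsup_{t\to\infty}\|\Delta\psi_h(t)\|^2,
\]
with the optimal choice $\alpha = \nu\bar{\lambda}_1/4$. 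Rearranging delivers the asserted bound on $\limsup \|\Delta\psi_h\|$.

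There is no serious obstacle: conformity $S_h^0 \subset H^2_0(\Omega)$ ensures that skew-symmetry of $b$ and $b_0$, the Poincar\'e-Friedrichs inequality \eqref{eq:eigen_H1}, and the choice of test function are all legal at the discrete level. The only two small points to verify carefully are (i) that the hypothesis $\|\nabla\psi_{0,h}\| \leq C\|\nabla\psi_0\|$ is indeed used only to pass from $\psi_{0,h}$ to $\psi_0$ in the right-hand side (this is trivial for standard choices such as the $C^1$ interpolant or an elliptic projection of $\psi_0$), and (ii) that the constant $\lambda_1$ appearing in the statement coincides with $\bar{\lambda}_1$ of the Dirichlet Laplacian, which is the eigenvalue that the Poincar\'e inequality actually supplies.
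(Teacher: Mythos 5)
Your proposal follows the paper's proof essentially verbatim: the paper also tests with $\psi_h$ (then multiplies by $e^{2\alpha t}$, which is equivalent to your choice $\chi=e^{2\alpha t}\psi_h$), uses skew-symmetry of $b$ and $b_0$, Young's inequality with kick-back, the Poincar\'e--Friedrichs inequality to form $\beta$, integration in time, and the hypothesis on $\|\nabla\psi_{0,h}\|$; for \eqref{eq:estimate-semi-discrete-2} it likewise drops the first term and applies the L'Hospital identity as in Remark~\ref{remark:2.4}. Your two cautionary points (role of the initial-data hypothesis, identification of $\lambda_1$ with $\bar{\lambda}_1$) are consistent with the paper's usage, so no further changes are needed.
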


\begin{proof}
    Choose $\chi=\psi_h$ in \eqref{eq:discprob}.
Since $b(\psi_h;\psi_h,\psi_h)=0$ and $b_0(\psi_h,\psi_h)=0$, multiply $e^{2\alpha t}$ to the both sides to arrive at
    \begin{equation*}
        \frac{d}{dt}(e^{2\alpha t}\norm{\nabla\psi_h(t)}^2)+2 (\nu-\frac{\alpha}{\lambda_1})e^{2\alpha t}\norm{\Delta\psi_h(t)}^2\leq \frac{\mu^2}{\nu}e^{2\alpha t}\norm{F}_{-2}^2+ \nu \norm{\Delta\psi_h(t)}^2.
    \end{equation*}
    Use the kick-back argument and then integrate from $0$ to $t$ and multiply $e^{2\alpha t}$ with the bound of $\norm{\nabla\psi_{0,h}}$ to complete the estimate \eqref{eq:estimate-semi-discrete-1}.

    In order to estimate \eqref{eq:estimate-semi-discrete-2}, first drop the first term on the left hand side as it is nonnegative and then take the limit superior.
An application of L'Hospital rule yields the desired estimate and this concludes the rest of the proof.
\end{proof}
When $F$ is time independent, following the proof of Lemma~\ref{lem:discapriori}, we easily obtain
\begin{equation*}
\norm{\nabla\psi_h(t)}^2+ \beta e^{-2\alpha t}\int_0^te^{2\alpha t}\norm{\Delta\psi_h(s)}^2\ds\leq C e^{-2\alpha t}\norm{\nabla \psi_{0,h}}^2+\frac{\mu^2}{2\alpha \nu}\norm{F}_{-2}^2 (1-e^{-2\alpha t}).
    \end{equation*}
Therefore following the continuous case,
we derive the following result on the existence of a discrete absorbing set.
\begin{corollary}\label{cor:discabsorbing}
    There exists a bounded absorbing ball $B_\rho(0)=\{v_h\in S_h^0:\norm{\nabla v_h}\leq \rho\}$ where $\rho$ is given by $\rho^2=\frac{\mu^2}{\alpha \nu}\norm{F}_{-2}^2$.
    %
\end{corollary}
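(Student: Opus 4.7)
The plan is to mirror the argument used for the continuous attractor in Lemma~\ref{lem:attractor-H1}, substituting the time-independent discrete energy estimate displayed just above the corollary for its continuous counterpart from Remark~\ref{remark:2.2}. Before invoking it, one should first record that the discrete semigroup $S_h(t)$ is globally well-defined on $S_h^0$: the ODE system produced by~\eqref{eq:discprob} has a locally Lipschitz nonlinearity, Picard gives local existence, and the uniform-in-time bound of Lemma~\ref{lem:discapriori} rules out blow-up, so $S_h(t):S_h^0\to S_h^0$ is defined for every $t>0$.

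Next I would take an arbitrary $r>0$ and any $\psi_{0,h}\in S_h^0$ with $\norm{\nabla\psi_{0,h}}\leq r$. Plugging this into the time-independent bound, using $1-e^{-2\alpha t}\leq 1$, and rewriting in terms of the target radius $\rho^2=\frac{\mu^2}{\alpha\nu}\norm{F}_{-2}^2$ yields
\begin{equation*}
\norm{\nabla\psi_h(t)}^2\leq Ce^{-2\alpha t}r^2+\tfrac{1}{2}\rho^2.
\end{equation*}
Choosing
\begin{equation*}
t^\star(r)=\max\Bigl\{0,\;\tfrac{1}{2\alpha}\log(2Cr^2/\rho^2)\Bigr\}
\end{equation*}
forces the exponential term below $\tfrac{1}{2}\rho^2$ for $t\geq t^\star(r)$, so $\norm{\nabla\psi_h(t)}\leq \rho$, i.e.\ $S_h(t)\{v_h\in S_h^0:\norm{\nabla v_h}\leq r\}\subset B_\rho(0)$, which is exactly the absorbing property claimed.

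There is no real obstacle here: the corollary is a quantitative restatement of the semi-discrete energy inequality for time-independent $F$, in exact parallel with Lemma~\ref{lem:attractor-H1}. The one thing I would explicitly check is that the constant $C$ multiplying $\norm{\nabla\psi_{0,h}}^2$ in the displayed bound is independent of $h$; this is inherited from the assumption $\norm{\nabla\psi_{0,h}}\leq C\norm{\nabla\psi_0}$ in Lemma~\ref{lem:discapriori} and from Gronwall-type constants that involve only $\alpha$, $\nu$ and $\bar{\lambda}_1$, so the absorbing radius $\rho$ is genuinely mesh-independent---which is what is needed for any subsequent construction of a discrete global attractor uniform in $h$.
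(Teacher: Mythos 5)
Your proposal is correct and follows essentially the same route the paper intends: the paper proves the corollary by pointing to the time-independent semidiscrete energy bound displayed just above it and repeating the argument of Lemma~\ref{lem:attractor-H1} (split off $\tfrac12\rho^2$, absorb the exponentially decaying initial-data term by choosing $t^\star$ large enough). Your additional remarks on global-in-time solvability of the discrete ODE system and on the $h$-independence of the constant are consistent with Lemma~\ref{lem:discapriori} and do not change the argument.
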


For the semi-discrete solution, assume that the following estimate hold:
\begin{equation*}
    e^{-2\alpha t}
    \int_{0}^t e^{2\alpha s}
    \|\nabla \partial_t^2 \psi_h(s)\|^2 \ds \leq C.
\end{equation*}
This proof will be on the lines of the regularity results proved for the continuous problem, and it involves an introduction of discrete biharmonic operator with analysis quite tedious, therefore, we refrain from giving a proof of it.

\section{Error estimates for the semi-discrete problem.}\label{sec:err}
This section focusses on optimal error estimates in $L^\infty(H^1)$-norm, for the semi-discrete problem \eqref{eq:discprob}.

\subsection{Elliptic projection.}
In order to obtain optimal order of convergence, we split the total error $\psi-\psi_h$ as
\begin{equation*}
    \psi-\psi_h=(\psi-\widetilde{\psi}_h)-(\psi_h-\widetilde{\psi}_h)=:\eta-\zeta
\end{equation*}
where $\widetilde{\psi}_h\in S_h^0$ is an elliptic projection,
which is defined as follows:
Given $\psi$, find $\widetilde{\psi}_h\in S_h^0$ satisfying
\begin{equation}\label{eq:auxProb}
    A(\psi;\psi-\widetilde{\psi}_h,\chi)=0\quad\forall\chi\in S_h^0
\end{equation}
where $A(\cdot;\varphi,\chi)$ is a linearized operator given by
\begin{equation*}
    A(\psi;\varphi,\chi)=\nu(\Delta\varphi,\Delta\chi)+b(\psi;\varphi,\chi)+b(\varphi;\psi,\chi) +\mu b_0(\varphi,\chi)+\lambda(\nabla\varphi,\nabla\chi)
\end{equation*}
for a large positive $\lambda$ to be defined later.
Note that $A(\cdot;\cdot,\cdot)$ satisfies boundedness
\begin{equation}
    |A(\psi;\varphi,w)|\leq M|\psi|_2|\varphi|_2|w|_2\quad\forall \psi,\varphi, w\in H^2_0(\Omega)\label{eq:discconti}
\end{equation}
and coercive with sufficiently large $\lambda>0$.
For $\varphi\in H^2_0(\Omega)$, we have
\begin{equation*}
    \begin{aligned}
        A(\psi;\varphi,\varphi)
         & =\nu|\varphi|_2^2+b(\varphi;\psi,\varphi)+\lambda\norm{\nabla\varphi}^2 \\
         & \geq \nu|\varphi|_2^2-M\norm{\psi}_{1,\infty}|\varphi|_2\norm{\nabla\varphi}+\lambda\norm{\nabla\varphi}^2 \\
         & \geq \frac{\nu}{2}|\varphi|_2^2+(\lambda-C(M,\norm{\psi}_{1,\infty}))\norm{\nabla\varphi}^2.
    \end{aligned}
\end{equation*}
Choose large enough $\lambda>0$ so that $(\lambda-C(M,\norm{\psi}_{1,\infty}))>0$.
Then with $\nu_0=\nu/2$
\begin{equation}\label{eq:disccoer}
    A(\psi;\varphi,\varphi)\geq \nu_0|\varphi|_2^2.
\end{equation}
Thus, for a given $\psi$, the Lax-Milgram theorem yields the existence of a unique $\widetilde{\psi}_h\in S_h^0$.
Below, we discuss estimates of $\eta=\psi-\widetilde{\psi}_h$.
\begin{lemma}\label{lem:apriori_eta}
    There holds for $\delta\in(1/2,1]$
    \begin{equation*}
        \norm{\eta}_0 + \norm{\eta}_1 + h^\delta|\eta|_2\leq Ch^{2\delta}\norm{\psi}_{2+\delta},
    \end{equation*}
    for $\delta \in (1,2)$
    \begin{equation*}
        \norm{\eta}_0 + h \norm{\eta}_1 + h^\delta|\eta|_2\leq Ch^{2\delta}\norm{\psi}_{2+\delta},
    \end{equation*}
    and for $\delta\geq2$
    \begin{equation*}
        \norm{\eta}_j\leq Ch^{\min(k+1,2+\delta)-j}\norm{\psi}_{2+\delta},\;j=0,1,2.
    \end{equation*}
\end{lemma}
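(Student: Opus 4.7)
The plan is a classical three-step argument: a C\'ea--Galerkin energy estimate for $|\eta|_2$, followed by two Aubin--Nitsche duality arguments for $\norm{\eta}_0$ and $\norm{\eta}_1$. The ingredients are Galerkin orthogonality $A(\psi;\eta,\chi)=0$ for every $\chi\in S_h^0$ (immediate from \eqref{eq:auxProb}), the coercivity \eqref{eq:disccoer} and continuity \eqref{eq:discconti} of $A(\psi;\cdot,\cdot)$, the interpolation bound \eqref{estimate:interpolation}, and the biharmonic regularity Lemma~\ref{lem:regularity-biharmonic}.

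The energy bound comes first. By coercivity and Galerkin orthogonality, for every $\chi\in S_h^0$,
\begin{equation*}
\nu_0\,|\eta|_2^2 \leq A(\psi;\eta,\eta) = A(\psi;\eta,\psi-\chi) \leq C\,|\eta|_2\,|\psi-\chi|_2,
\end{equation*}
so $|\eta|_2 \leq C\inf_{\chi\in S_h^0}|\psi-\chi|_2$. Choosing $\chi=I_h\psi$ and applying \eqref{estimate:interpolation} with $j=2$ gives the $|\eta|_2$ bound stated in all three regimes of $\delta$.

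Next I set up the $L^2$ duality. Let $\Phi\in H^2_0$ solve the adjoint problem $A(\psi;v,\Phi)=(\eta,v)$ for all $v\in H^2_0$. The adjoint form shares the principal part $\nu(\Delta v,\Delta\Phi)$ and remains coercive because $b(\psi;v,v)=0$ and the remaining term $b(v;\psi,v)$ is absorbed by the large $\lambda$ exactly as in \eqref{eq:disccoer}, so $\Phi$ exists and is unique. Rewriting the equation as $\nu\Delta^2\Phi=\eta+(\text{lower-order terms in }\Phi)$ and bootstrapping through Lemma~\ref{lem:regularity-biharmonic} yields $\norm{\Phi}_{2+\delta}\leq C\norm{\eta}_0$ for every $\delta$ admitted by that lemma. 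Then Galerkin orthogonality, continuity of $A$, and \eqref{estimate:interpolation} deliver
\begin{equation*}
\norm{\eta}_0^2 = A(\psi;\eta,\Phi-I_h\Phi) \leq C\,|\eta|_2\,h^{\min(k-1,\delta)}\,\norm{\Phi}_{2+\delta},
\end{equation*}
and combining with the energy bound produces the claimed $\norm{\eta}_0$ estimate. The $H^1$ duality is identical except that the dual data $v\mapsto(\nabla\eta,\nabla v)$ represents $-\Delta\eta$, which lies only in $H^{-1}$, so Lemma~\ref{lem:regularity-biharmonic} can be applied only at $\delta'\leq 1$, yielding $\norm{\Phi}_{2+\min(\delta,1)}\leq C\norm{\nabla\eta}$. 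This gives $\norm{\eta}_1\leq Ch^{2\delta}\norm{\psi}_{2+\delta}$ for $\delta\in(1/2,1]$, and $\norm{\eta}_1\leq Ch^{1+\delta}\norm{\psi}_{2+\delta}\leq Ch^{2\delta-1}\norm{\psi}_{2+\delta}$ for $\delta\in(1,2)$.

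The main technical hurdle is the regularity of the adjoint problem rather than the energy estimate. Since $A(\psi;\cdot,\cdot)$ is nonsymmetric, I would first verify coercivity of its adjoint (the argument is exactly the one used for $A$ in \eqref{eq:disccoer}, relying on the skew-symmetry of $b$), and then invoke Lemma~\ref{lem:regularity-biharmonic} by treating $b(\psi;\cdot,\Phi)$, $b(\cdot;\psi,\Phi)$, $\mu b_0$, and $\lambda\nabla$ as compact lower-order perturbations of $\nu\Delta^2$. Care is needed about the domain restrictions in that lemma: $\delta\in[1/2,1)$ is permitted on any Lipschitz polygon, $\delta=1$ requires convexity, and $\delta\in(1,2]$ requires the $126.283^\circ$ angle condition. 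The asymmetry of the $L^2$ and $H^1$ bounds for $\delta>1$ reflects precisely the $H^{-1}$ ceiling on the $H^1$ dual data.
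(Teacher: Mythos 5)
Your proposal is correct and follows essentially the same route as the paper: a C\'ea-type energy bound for $|\eta|_2$ from coercivity, Galerkin orthogonality and the interpolation estimate, followed by Aubin--Nitsche duality with the adjoint of the linearized form $A(\psi;\cdot,\cdot)$ and the biharmonic regularity of Lemma~\ref{lem:regularity-biharmonic}. The only cosmetic difference is the choice of dual data: the paper runs the duality with $g=-\Delta\eta$ (obtaining $\norm{\eta}_1$ directly and $\norm{\eta}_0\le\norm{\eta}_1$ for $\delta\le 1$, plus a negative-norm duality $\norm{\eta}_{2-\delta}$ for $\delta\in(1,2]$), whereas you run separate $L^2$ and $H^1$ dualities; both yield the stated rates, and your sketch of why the adjoint problem is coercive and regular is in fact more explicit than the paper, which simply assumes the adjoint regularity estimates \eqref{estimate:regularity-0} and \eqref{eq:dual_highreg}.
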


\begin{proof}
    From the coercivity \eqref{eq:disccoer}, the boundedness \eqref{eq:discconti}, and the definition of $\eta$ \eqref{eq:auxProb}, we arrive at
    \begin{equation*}
        \nu_0|\eta|_2^2\leq A(\psi;\eta,\eta)=A(\psi;\eta,\psi-I_h\psi)\leq M|\eta|_2|\psi-I_h\psi|_2,
    \end{equation*}
    and hence, using interpolation property \eqref{estimate:interpolation}, it follows for all $\delta > 1/2$ that
    \begin{equation}\label{estimate:eta-H-2}
        |\eta|_2 \leq C(\nu,M)  h^{\min(k+1,2+\delta)-2}  |\psi|_{2+\delta}.
    \end{equation}

    We now appeal to the Aubin-Nitsche duality argument.
    For given $g\in H^{-1}(\Omega)$, $\Phi^\star\in H^2_0(\Omega)$ solves the adjoint problem
    \begin{equation}\label{eq:aubindual}
        A(\psi;w,\Phi^\star)=\einner{w,g}   \forall w\in H^2_0(\Omega).
    \end{equation}
    with the
    following elliptic regularity for $\delta\in (1/2, 1]$
    \begin{equation}\label{estimate:regularity-0}
        \norm{\Phi^\star}_{2+\delta}\leq C_{reg}\norm{g}_{-1}.
    \end{equation}

    Setting $w=\eta$ and $g=-\Delta \eta$ in \eqref{eq:aubindual} and it follows for $\delta\in(1/2,1]$
    \begin{equation*}
        \begin{aligned}
            \norm{\eta}_1^2 & =A(\psi;\eta,\Phi^\star-I_h\Phi^\star) \\
                            & \leq M|\psi|_2|\eta|_2|\Phi^\star-I_h\Phi^\star|_2
        \end{aligned}
    \end{equation*}
    Hence, a use of the elliptic regularity results for $\delta\in(1/2,1]$ with interpolation error estimate \eqref{estimate:interpolation} yields
    \begin{equation*}
        \norm{\eta}_1\leq Ch^\delta|\psi|_2|\eta|_2,
    \end{equation*}
    and $L^2$ estimate follows from $\|\eta\| \leq \|\eta\|_1.$

    In addition, assume that
    the adjoint problem satisfies the following elliptic regularity result for $\delta\in (1,2]$
    \begin{equation}\label{eq:dual_highreg}
        \norm{\Phi^\star}_{2+\delta} \leq C_{reg}\norm{g}_{\delta-2}.
    \end{equation}

    In order to estimate in lower norm, we note that
    \begin{equation}\label{eq:def-2-delta}
        \|\eta\|_{2-\delta} = \sup_{0\neq g\in H^{2-\delta}(\Omega)} \frac{| \einner{\eta, q}|}{\|g\|_{\delta-2}}.
    \end{equation}
    Choose $w=\eta$ in \eqref{eq:aubindual} to obtain
    \begin{align}
        \einner{\eta, g} & =A(\psi;\eta,\Phi^\star- I_h\Phi^\star)\nonumber \\
                         & \leq M|\psi|_2 |\eta|_2 |\Phi^\star-I_h\Phi^\star|_2\label{eq:aubinL2} \\
                         & \leq Ch^\delta  |\psi|_2 |\eta|_2  |\Phi^{\star}|_{2+\delta} \nonumber \\
                         & \leq Ch^\delta |\psi|_2 |\eta|_2 \norm{g}_{\delta-2}.\nonumber
    \end{align}
    A use of \eqref{eq:def-2-delta} in \eqref{eq:aubinL2} shows for $\delta\in (1,2]$
    \begin{equation}\label{estimate-eta-2-delta}
        \|\eta\|_{2-\delta} \leq C h^{\delta}  |\eta|_2,
    \end{equation}
    and the rest of the estimate follows.
This completes the proof.
\end{proof}

A similar results hold for the estimate of $\partial_t\eta$.
\begin{lemma}
    There is a positive constant $C>0$ such that for $\delta\in (1/2,1]$
    \begin{equation}\label{eq:estimate-eta-t-1}
        \norm{\partial_t\eta}+h^\delta|\eta|_2\leq Ch^{2\delta}(\norm{\psi}_{2+\delta}+\norm{\partial_t\psi}_{2+\delta}),
    \end{equation}
    for $\delta \in (1,2)$
    \begin{equation}\label{eq:estimate-eta-t-2}
        \norm{\partial_t\eta}+ h \norm{\partial_t \eta}_1+h^\delta|\eta|_2\leq Ch^{2\delta}(\norm{\psi}_{2+\delta}+\norm{\partial_t\psi}_{2+\delta}),
    \end{equation}
    and for $\delta\geq 2$
    \begin{equation}\label{eq:estimate-eta-t-3}
        \norm{\partial_t\eta}_j\leq Ch^{\min(k+1,2+\delta)-j}(\norm{\psi}_{2+\delta} + \norm{\partial_t\psi}_{2+\delta}),\ j=0,1,2.
    \end{equation}
\end{lemma}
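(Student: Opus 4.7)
The plan is to mimic the three-step structure of Lemma~\ref{lem:apriori_eta} by first differentiating the defining relation of the elliptic projection in time, and then re-running the coercivity and Aubin--Nitsche arguments on the resulting perturbed equation.

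\textbf{Step 1 (time-differentiated Galerkin orthogonality).} Since $A(\psi(t);\eta(t),\chi)=0$ for every $\chi\in S_h^0$ and every $t>0$, with $\chi$ independent of $t$, differentiating in $t$ and isolating $\partial_t\eta$ yields
\begin{equation*}
A(\psi;\partial_t\eta,\chi) = -b(\partial_t\psi;\eta,\chi) - b(\eta;\partial_t\psi,\chi) \qquad \forall\, \chi\in S_h^0.
\end{equation*}
This is the analog of Galerkin orthogonality for $\partial_t\eta$, but with two extra right-hand-side terms arising from differentiating the $\psi$-dependent slots of $A$.

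\textbf{Step 2 ($H^2$-estimate).} Introduce the auxiliary discrete quantity $\sigma_h := \partial_t\widetilde{\psi}_h - I_h(\partial_t\psi)\in S_h^0$, so that $\partial_t\eta = (\partial_t\psi - I_h\partial_t\psi) - \sigma_h$. Substituting into the equation of Step~1 and rearranging gives
\begin{equation*}
A(\psi;\sigma_h,\chi) = A(\psi;\partial_t\psi-I_h\partial_t\psi,\chi) + b(\partial_t\psi;\eta,\chi) + b(\eta;\partial_t\psi,\chi).
\end{equation*}
Taking $\chi = \sigma_h$, applying coercivity \eqref{eq:disccoer} on the left and boundedness \eqref{eq:discconti} together with the appropriate bounds for the trilinear form $b(\cdot;\cdot,\cdot)$ on the right (absorbing the $|\eta|_2$ factor via Lemma~\ref{lem:apriori_eta} and $|\partial_t\psi - I_h\partial_t\psi|_2$ via the interpolation estimate \eqref{estimate:interpolation}), and finally kicking back the $|\sigma_h|_2$ factors, produces
\begin{equation*}
|\sigma_h|_2 \leq C\, h^{\min(k+1,2+\delta)-2}\bigl(\|\psi\|_{2+\delta} + \|\partial_t\psi\|_{2+\delta}\bigr).
\end{equation*}
The triangle inequality and interpolation error for $\partial_t\psi - I_h\partial_t\psi$ then give the target $H^2$-type bound on $\partial_t\eta$.

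\textbf{Step 3 (Aubin--Nitsche duality for lower-order norms).} For a test datum $g$ in the appropriate negative Sobolev space, let $\Phi^\star\in H^2_0(\Omega)$ solve the adjoint problem $A(\psi;w,\Phi^\star) = \langle w, g\rangle$ for all $w\in H^2_0(\Omega)$. Choosing $w=\partial_t\eta$ and inserting $I_h\Phi^\star\in S_h^0$ via the relation of Step~1 yields
\begin{equation*}
\langle \partial_t\eta, g\rangle = A(\psi;\partial_t\eta, \Phi^\star - I_h\Phi^\star) - b(\partial_t\psi;\eta,I_h\Phi^\star) - b(\eta;\partial_t\psi,I_h\Phi^\star).
\end{equation*}
Bounding the first term by boundedness of $A$ combined with the interpolation estimate and elliptic regularity \eqref{estimate:regularity-0} (or \eqref{eq:dual_highreg} for the negative-norm case), and estimating the two remaining $b$-terms using the third estimate of the trilinear form together with the Sobolev embedding $H^{2+\delta}\hookrightarrow W^{1,\infty}$ for $\delta>0$ and Lemma~\ref{lem:apriori_eta}, delivers the claimed estimates \eqref{eq:estimate-eta-t-1}--\eqref{eq:estimate-eta-t-3}.

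\textbf{Main obstacle.} The central difficulty, compared with Lemma~\ref{lem:apriori_eta}, is handling the extra nonlinear coupling terms $b(\partial_t\psi;\eta,\chi)$ and $b(\eta;\partial_t\psi,\chi)$ born of time differentiation. One must allocate regularity so that the factor of $|\eta|_2$ contributes a full $h^\delta$ via Lemma~\ref{lem:apriori_eta}, while the $\partial_t\psi$ factor is controlled by $\|\partial_t\psi\|_{2+\delta}$ through the $W^{1,\infty}$-bound, without spending any extra power of $h$. Consequently, the same $h^{\min(k+1,2+\delta)-j}$ convergence rate as for $\eta$ is preserved, at the price of loading an additional $\|\partial_t\psi\|_{2+\delta}$ term on the right-hand side.
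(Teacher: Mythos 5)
Your proposal is correct and follows essentially the same route as the paper, which simply differentiates the projection identity \eqref{eq:auxProb} in time and then repeats the coercivity and Aubin--Nitsche arguments of Lemma~\ref{lem:apriori_eta}; your Step~1 identity $A(\psi;\partial_t\eta,\chi)=-b(\partial_t\psi;\eta,\chi)-b(\eta;\partial_t\psi,\chi)$ is in fact the precise form of what the paper writes loosely as $-A(\partial_t\psi;\eta,\chi)$, and your intermediate quantity $\sigma_h$ is a clean way to carry out the ``repeat'' step. The only point deserving extra care is the term $b(\eta;\partial_t\psi,\cdot)$ in the duality argument, where $\eta$ sits in the slot bounded by $\|\Delta\eta\|\sim h^{\delta}$; one recovers the full $h^{2\delta}$ there by splitting off $\Phi^\star-I_h\Phi^\star$ and integrating by parts against $\nabla\eta$ (using $\partial_{\nn}\eta=0$ on $\partial\Omega$), a detail the paper also leaves implicit.
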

\begin{proof}
    Differentiate \eqref{eq:auxProb} with respect to time to arrive at
    \begin{equation*}
        A(\psi;\partial_t\eta,\chi)=-A(\partial_t\psi;\eta,\chi).
    \end{equation*}
    Now we repeat the argument in Lemma~\ref{lem:apriori_eta} to complete the rest of the proof.
\end{proof}

\subsection{\textit{A priori} error estimates.}
In this subsection, optimal error estimates are derived.

Since $(\psi-\psi_h)(t)=\eta(t)-\zeta(t)$, triangle inequality yields
\begin{equation*}
    \norm{\nabla(\psi-\psi_h)(t)}\leq \norm{\nabla\eta}+\norm{\nabla\zeta}.
\end{equation*}
As the estimate of $\|\nabla\eta\|$ is known from Lemma~\ref{lem:apriori_eta},
it is enough to estimate $\|\nabla\zeta\|$.
From \eqref{eq:weakform}, \eqref{eq:discprob} and the property \eqref{eq:auxProb}, we obtain
\begin{equation}\label{eq:zeta_eq}
    \begin{aligned}
    (\partial_t(\nabla\zeta),\nabla\chi)&+\nu (\Delta\zeta,\Delta\chi)+ b(\psi;\zeta,\chi) + \mu b_0(\zeta, \chi)\\
    &=(\partial_t(\nabla\eta),\nabla\chi) - \lambda(\nabla\eta,\nabla\chi)+I_1(\chi),
    \end{aligned}
\end{equation}
where
\begin{equation*}
    \begin{aligned}
        I_1(\chi) & =b(\psi;\psi_h,\chi)+b(\tilde{\psi}_h;\psi,\chi)-b(\psi;\psi,\chi)-b(\psi_h;\psi_h,\chi) \\
                  & =-b(\eta;\eta,\chi)+b(\eta;\zeta,\chi)-b(\zeta;\psi_h,\chi)\\
                  & = -b(\eta;\eta,\chi)-b(\zeta;\psi_h,\chi)
    \end{aligned}
\end{equation*}
Note $(\partial_t(\nabla\eta),\nabla\chi)=(\partial_t\eta,-\Delta\chi)$ and $(\nabla\eta,\nabla\chi)=(\eta,-\Delta\chi)$ as $\chi \in S_h^0$.
In the remainder of this paper, we denote $\hat{f}(t)=e^{\alpha t}f(t)$
\begin{lemma}\label{lem:apriori_zeta_H1}
    There holds
    \begin{equation} \label{zeta-1}
        \norm{\nabla\zeta(t)}^2\leq C e^{-2\alpha t}  \Big(\norm{\nabla \zeta(0)}^2+
        C \int_0^te^{2\alpha s}(\norm{\partial_t\eta}^2+\norm{\eta}^2)\ds\Big)
        \exp \big({C(M,\nu)\int_0^t|\psi|_2^2\ds}\big).
    \end{equation}
\end{lemma}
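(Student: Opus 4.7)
The approach is to test the error equation \eqref{eq:zeta_eq} with $\chi=\zeta$, multiply through by $e^{2\alpha t}$, and derive a differential inequality for $e^{2\alpha t}\|\nabla\zeta\|^2$ that is closed by Gronwall's lemma. The terms $b(\psi;\zeta,\zeta)$ and $b_0(\zeta,\zeta)$ vanish by the antisymmetry properties recalled earlier, yielding
\[\tfrac{1}{2}\tfrac{d}{dt}\bigl(e^{2\alpha t}\|\nabla\zeta\|^2\bigr)+\nu e^{2\alpha t}\|\Delta\zeta\|^2-\alpha e^{2\alpha t}\|\nabla\zeta\|^2 = e^{2\alpha t}\bigl[(\partial_t\eta,-\Delta\zeta)-\lambda(\eta,-\Delta\zeta)+I_1(\zeta)\bigr].\]
Since $\alpha<\nu\bar{\lambda}_1/2$, the estimate \eqref{eq:eigen_H1} absorbs $\alpha e^{2\alpha t}\|\nabla\zeta\|^2$ into a fraction of $\nu e^{2\alpha t}\|\Delta\zeta\|^2$, leaving a strictly positive coefficient on $\|\Delta\zeta\|^2$ available for subsequent kick-back.

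The structural step is to decompose $I_1$. Since $\psi-\psi_h=\eta-\zeta$, substituting $\psi_h=\psi-\eta+\zeta$ in $I_1(\zeta)=-b(\eta;\eta,\zeta)-b(\zeta;\psi_h,\zeta)$ and using $b(\zeta;\zeta,\zeta)=0$ gives
\[I_1(\zeta)=-b(\eta;\eta,\zeta)-b(\zeta;\psi,\zeta)+b(\zeta;\eta,\zeta).\]
The critical term is $b(\zeta;\psi,\zeta)$: the first bound in property (iii), Ladyzhenskaya applied to both $\|\nabla\psi\|_{0,4}$ and $\|\nabla\zeta\|_{0,4}$, and Young's inequality with $p=4/3$, $q=4$ give
\[|b(\zeta;\psi,\zeta)|\leq C\|\Delta\zeta\|^{3/2}\|\nabla\psi\|^{1/2}|\psi|_2^{1/2}\|\nabla\zeta\|^{1/2}\leq \tfrac{\nu}{8}\|\Delta\zeta\|^2+C\nu^{-3}\|\nabla\psi\|^2|\psi|_2^2\|\nabla\zeta\|^2.\]
The uniform-in-time bound on $\|\nabla\psi(t)\|$ from Lemma~\ref{lem:qge_prioribound} reduces the last coefficient to $C(M,\nu)|\psi|_2^2\|\nabla\zeta\|^2$, which is precisely the Gronwall kernel in the conclusion.

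The linear residuals $(\partial_t\eta,-\Delta\zeta)$ and $\lambda(\eta,-\Delta\zeta)$ are handled by Cauchy--Schwarz and Young, producing a further $\tfrac{\nu}{4}\|\Delta\zeta\|^2$ plus $C(\|\partial_t\eta\|^2+\|\eta\|^2)$ on the inhomogeneous side. The residual trilinear contributions $b(\eta;\eta,\zeta)$ and $b(\zeta;\eta,\zeta)$ are bounded via the same first-bound--Ladyzhenskaya--Young chain, absorbing a further fraction of $\|\Delta\zeta\|^2$, while their leftover multipliers of $\|\nabla\zeta\|^2$ are, by Lemma~\ref{lem:apriori_eta} and the boundedness of $|\psi|_2$, merged into the $|\psi|_2^2$-Gronwall kernel up to the generic constant. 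After these steps, integration from $0$ to $t$ yields
\[e^{2\alpha t}\|\nabla\zeta(t)\|^2\leq \|\nabla\zeta(0)\|^2+C\int_0^t e^{2\alpha s}\bigl(\|\partial_t\eta\|^2+\|\eta\|^2\bigr)\ds+C(M,\nu)\int_0^t |\psi(s)|_2^2\,e^{2\alpha s}\|\nabla\zeta(s)\|^2\ds,\]
and Gronwall's lemma followed by multiplication by $e^{-2\alpha t}$ produces \eqref{zeta-1}.

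The main obstacle is the choice of Young exponents for the critical term $b(\zeta;\psi,\zeta)$: a direct bound via the second form in (iii) would produce a Gronwall kernel of type $|\psi|_2^4$, which is not a priori integrable over $(0,\infty)$, whereas the splitting $p=4/3,\,q=4$ combined with the uniform boundedness of $\|\nabla\psi\|$ from Lemma~\ref{lem:qge_prioribound} gives the sharp integrable kernel $|\psi|_2^2$ asserted in the statement. The analogous bookkeeping for the $\eta$-terms, made possible by the presence of the $\lambda(\nabla\eta,\nabla\chi)$ contribution in the elliptic projection, ensures that only $\|\partial_t\eta\|^2+\|\eta\|^2$ survives in the final inhomogeneous integrand.
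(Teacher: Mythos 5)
Your proposal is correct and follows essentially the same route as the paper: test with $e^{2\alpha t}\zeta$, exploit $b(\psi;\zeta,\zeta)=b_0(\zeta,\zeta)=0$, rewrite $I_1(\zeta)=-b(\eta;\eta,\zeta)-b(\zeta;\psi,\zeta)+b(\zeta;\eta,\zeta)$, estimate the critical term $b(\zeta;\psi,\zeta)$ by the Ladyzhenskaya--Young chain with exponents $4/3$ and $4$ so that the boundedness of $\norm{\nabla\psi}$ from Lemma~\ref{lem:qge_prioribound} leaves the integrable kernel $|\psi|_2^2$, and close with Gronwall. The only cosmetic difference is that the paper books the leftovers of the $\eta$-trilinear terms as inhomogeneous contributions rather than folding them into the Gronwall kernel, which changes nothing in the conclusion.
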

\begin{proof}
    Substitute $\chi=e^{2\alpha t}\zeta$ in \eqref{eq:zeta_eq} and use $(\nabla\eta,\nabla\chi)=(\eta,-\Delta\chi)$ with the  Poincar\`e inequality and  $\hat{\zeta}(t)=e^{\alpha t}\zeta(t)$ to obtain
    \begin{equation}\label{auxbound_for_zeta_H1}
        \frac{d}{dt}\norm{\nabla\hat\zeta(t)}^2+2(\nu-\frac{2\alpha}{\lambda_1})|\hat\zeta|_2^2\leq 2(e^{\alpha t}\norm{\partial_t\eta}+\norm{\hat\eta})|\hat\zeta|_2+2e^{2\alpha t}I_1(\zeta).
    \end{equation}
    To estimate the last term on the right-hand side, apply the Ladyzhenskaya's inequality with the Young's inequality to arrive at
     \begin{eqnarray} \label{I-2}
          2e^{2\alpha t}I_1(\zeta)
            & =&2e^{-\alpha t}(b(\hat\zeta;\hat\eta,\hat\zeta)-b(\hat\eta;\hat\eta,\hat\zeta)-b(\hat\zeta;\hat\psi,\hat\zeta))\nonumber\\
             & \leq & 2Me^{-\alpha t}(|\hat\zeta|_2^{3/2}\norm{\nabla\hat\eta}^{1/2}|\hat\eta|_2^{1/2}+|\hat\eta|_2^{3/2}\norm{\nabla\hat\eta}^{1/2}|\hat\zeta|_2+|\hat\zeta|_2^{3/2}\norm{\nabla\hat\psi}^{1/2}|\hat\psi|_2^{1/2}\norm{\nabla\hat\zeta}^{1/2}) \nonumber\\
             & \leq & \frac{\nu}{4}|\hat\zeta|_2^2+C(M,\nu) \Big(e^{-4\alpha t}(\norm{\nabla\hat\eta}^2|\hat\eta|_2^2+|\hat\eta|_2^3\norm{\nabla\hat\eta})+e^{-2\alpha t}\norm{\nabla\psi}^2|\hat\psi|_2^2\norm{\nabla\hat\zeta}^2\Big).
    \end{eqnarray}
 Moreover,  for the first term on the right hand side of (\ref{zeta-1}),  a use of the Young's inequality shows
 \begin{equation} \label{eta-t}
 2(e^{\alpha t}\norm{\partial_t\eta}+\norm{\hat\eta})|\hat\zeta|_2 \leq \frac{\nu}{4}|\hat\zeta|_2^2 + C \big( e^{2\alpha t} \norm{\partial_t \eta}^2 + \norm{\hat\eta})^2\big).
 \end{equation}
 On substitution of (\ref{I-2}) and \eqref{eta-t} in \eqref{auxbound_for_zeta_H1}, an application of  the Gronwall Lemma yields
    \begin{equation*}
        \norm{\nabla\hat\zeta(t)}^2\leq e^{C(M,\nu)\int_0^te^{-2\alpha t}\norm{\nabla\psi}^2|\hat\psi(s)|_2^2\ds}\left(\norm{\nabla\zeta(0)}^2+C(M,\nu)\int_0^te^{2\alpha s}(\norm{\partial_t\eta}^2+\norm{\eta}^2)\ds\right).
    \end{equation*}
    For the integral term appeared in the exponential, it follows using boundedness of $\norm{\nabla\psi(s)}$ that
    \begin{equation*}
        \int_0^te^{-2\alpha s}\norm{\nabla\psi(s)}^2|\hat\psi|_2^2\ds\leq C\int_0^t|\psi(s)|_2^2\ds
    \end{equation*}
    This
    completes the rest of the proof.
\end{proof}

Finally, using triangle inequality with Lemma~\ref{lem:apriori_zeta_H1} and \eqref{eq:auxProb}, we derive the main theorem on error analysis.
\begin{theorem}\label {thm:apriori}
    Let $\psi$ and $\psi_h$, respectively, be the solution of \eqref{eq:weakform} and \eqref{eq:discprob}.
    Then, there exists a positive constant $C$ independent of $h$ such that
    \begin{equation*}
        \begin{aligned}
            \norm{\nabla(\psi-\psi_h)(t)}^2 & \leq Ch^{2r}\norm{\psi(t)}_{2+\delta}^2 \\
                                            & + e^{-2\alpha t+\hat{C}}h^{2r} \left(\norm{\psi_0}_{2+\delta}^2+ \int_0^te^{2\alpha s}(\norm{\psi}_{2+\delta}^2+\norm{\partial_t\psi}_{2+\delta}^2)\ds\right),
        \end{aligned}
    \end{equation*}
    provided $\psi_h(0)\in S_h^0$ is chosen as an interpolant with property
    \begin{equation*}
        \norm{\psi_0-\psi_h(0)}_1\leq Ch^{r}\norm{\psi_0}_{2+\delta}.
    \end{equation*}
    Here,
    \begin{equation*}
        r=\left\{\begin{array}{ll}
            2\delta, \quad & \textrm{if } 1/2<\delta < 1, \\
            \min(k+1,2\delta)-1, & \textrm{if } 1\leq \delta < 2, \\
            \min(k+1,2+2\delta)-1,& \textrm{if } 2\geq \delta,
        \end{array}
        \right.
    \end{equation*}
    and
    \begin{equation*}
        \hat{C}=C(M,\nu)\int_0^t|\psi|_2^2\ds.
    \end{equation*}
\end{theorem}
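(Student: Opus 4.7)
The plan is to combine the bounds previously derived by invoking the splitting $\psi - \psi_h = \eta - \zeta$, where $\eta = \psi - \widetilde{\psi}_h$ is the elliptic projection error and $\zeta = \psi_h - \widetilde{\psi}_h$. A triangle inequality
\[
\|\nabla(\psi-\psi_h)(t)\| \leq \|\nabla\eta(t)\| + \|\nabla\zeta(t)\|
\]
reduces the task to bounding each piece separately, and both are essentially in hand from the preceding lemmas.

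First, I would feed the appropriate case of Lemma~\ref{lem:apriori_eta} into the $H^1$-part of the bound to get $\|\nabla\eta(t)\|^2 \leq Ch^{2r}\|\psi(t)\|_{2+\delta}^2$, which produces the first summand $Ch^{2r}\|\psi(t)\|_{2+\delta}^2$ on the right-hand side of the claimed estimate. The exponent $r=r(\delta)$ is read off the three regimes of Lemma~\ref{lem:apriori_eta}: in the sub-conforming regime $1/2<\delta<1$, the Aubin--Nitsche duality produces $h^{2\delta}$, while in the higher-regularity regimes the standard interpolation rates apply.

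Next, I would apply Lemma~\ref{lem:apriori_zeta_H1} directly:
\[
\|\nabla\zeta(t)\|^2 \leq Ce^{-2\alpha t}\Big(\|\nabla\zeta(0)\|^2 + C\int_0^t e^{2\alpha s}(\|\partial_t\eta(s)\|^2 + \|\eta(s)\|^2)\,ds\Big)\exp\Big(C(M,\nu)\int_0^t|\psi(s)|_2^2\,ds\Big),
\]
and substitute the bounds on $\|\eta\|$ and $\|\partial_t\eta\|$ from Lemma~\ref{lem:apriori_eta} and its time-differentiated counterpart \eqref{eq:estimate-eta-t-1}--\eqref{eq:estimate-eta-t-3}. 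Since both $\|\eta\|^2$ and $\|\partial_t\eta\|^2$ are $O(h^{2r})$ multiplied by $(\|\psi\|_{2+\delta}^2 + \|\partial_t\psi\|_{2+\delta}^2)$, this substitution reproduces the integral term of the theorem, and the exponential factor is identified with $e^{\hat C}$.

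For the initial contribution $\|\nabla\zeta(0)\|^2$, I would decompose $\zeta(0) = (\psi_h(0) - \psi_0) - \eta(0)$, bound the first summand by $Ch^r\|\psi_0\|_{2+\delta}$ using the assumed approximation property of $\psi_h(0)$, and the second by Lemma~\ref{lem:apriori_eta} evaluated at $t=0$. Together these yield the $Ch^{2r}\|\psi_0\|_{2+\delta}^2$ term inside the parentheses, at which point the three contributions assemble into the claimed estimate. The main obstacle here is not analytic but one of bookkeeping: in each of the three regimes $1/2<\delta<1$, $1\leq\delta<2$, and $\delta\geq 2$, one must match the interpolation exponent $r(\delta)$ consistently across Lemma~\ref{lem:apriori_eta}, its time-differentiated version, and the initial-data hypothesis, taking care that the coarsest available exponent is used so that every term can be collected under a single factor $h^{2r}$.
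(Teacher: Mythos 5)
Your proposal is correct and follows essentially the same route as the paper, which proves the theorem precisely by the triangle inequality on the splitting $\psi-\psi_h=\eta-\zeta$, combining Lemma~\ref{lem:apriori_eta} (and its time-differentiated counterpart) with Lemma~\ref{lem:apriori_zeta_H1} and the assumed approximation property of $\psi_h(0)$. The only nitpick is the sign in your decomposition of $\zeta(0)$ (it should be $\zeta(0)=(\psi_h(0)-\psi_0)+\eta(0)$), which is immaterial after the triangle inequality.
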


As a consequence of Lemma \ref{lem:apriori_zeta_H1}, if we choose ${\psi}_h(0)$ as elliptic projection at $t=0$, then
$\zeta(0)=0$.
With higher regularity, that is, $\delta \geq 1$, there holds the following superconvergence result:
\begin{equation} \label{eq:supercgt}
    \norm{\nabla\zeta(t)}^2\leq C e^{-2\alpha t+\hat{C}}h^{2(r+1)} \Big (\int_0^te^{2\delta s}(\norm{\partial_t\psi}_{2+\delta}^2+\norm{\psi}_{2+\delta}^2)\ds\Big).
\end{equation}

Application of the triangle inequality then yields the following optimal error estimate in $L^{\infty}(0,T;L^2).$
\begin{corollary}\label{cor:optimal-L2}
    Let $\psi$ and $\psi_h$, respectively, be the solution of \eqref{eq:weakform} and \eqref{eq:discprob}.
    Then, the following inequality holds for $\delta \geq 1,$
    \begin{equation*}
        \begin{aligned}
            \norm{(\psi-\psi_h)(t)}^2 & \leq Ch^{2(r+1)}  e^{-2\alpha t+\hat{C}}\left( \int_0^te^{2\alpha s}(\norm{\psi}_{2+\delta}^2+\norm{\partial_t\psi}_{2+\delta}^2)\ds\right),
        \end{aligned}
    \end{equation*}
    provided $\psi_h(0)\in S_h^0$ is chosen as the elliptic projection at $t=0.$
\end{corollary}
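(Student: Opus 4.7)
The plan is to deduce this as a direct consequence of the superconvergence estimate \eqref{eq:supercgt} for $\|\nabla\zeta\|$ together with the Poincar\'e inequality, coupled with the $L^2$-type bound on the elliptic projection error from Lemma~4.1. The hypothesis $\delta \geq 1$ is precisely what is needed to activate the improved $h^{2(r+1)}$ rate for $\zeta$, while the choice of initial data as the elliptic projection forces $\zeta(0)=0$ and eliminates the initial-data term.

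The first step is to apply the triangle inequality to the splitting $\psi-\psi_h = \eta - \zeta$, yielding
\begin{equation*}
    \|(\psi-\psi_h)(t)\|^2 \leq 2\|\eta(t)\|^2 + 2\|\zeta(t)\|^2.
\end{equation*}
For the $\eta$ term, I invoke the $L^2$ bound of Lemma~\ref{lem:apriori_eta}: in the regime $\delta\in[1,2)$ this gives $\|\eta(t)\| \leq Ch^{2\delta}\|\psi(t)\|_{2+\delta} = Ch^{r+1}\|\psi(t)\|_{2+\delta}$, and for $\delta\geq 2$ it gives $\|\eta(t)\|\leq Ch^{\min(k+1,2+\delta)}\|\psi(t)\|_{2+\delta}$, again of order $h^{r+1}$. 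For the $\zeta$ term, the hypothesis $\psi_h(0) = \widetilde{\psi}_h(0)$ gives $\zeta(0) = 0$, so I substitute this into \eqref{eq:supercgt} to obtain
\begin{equation*}
    \|\nabla\zeta(t)\|^2 \leq C e^{-2\alpha t + \hat{C}} h^{2(r+1)} \int_0^t e^{2\alpha s}\bigl(\|\psi\|_{2+\delta}^2 + \|\partial_t\psi\|_{2+\delta}^2\bigr)\ds,
\end{equation*}
and then, since $\zeta(t)\in S_h^0\subset H^1_0(\Omega)$, apply the Poincar\'e inequality $\|\zeta\|\leq C\|\nabla\zeta\|$ to transfer the bound to $\|\zeta(t)\|^2$. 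Combining the two ingredients through the triangle inequality delivers the claimed estimate, with the pointwise term $Ch^{2(r+1)}\|\psi(t)\|_{2+\delta}^2$ from $\|\eta\|^2$ either absorbed into the constant via the uniform regularity bounds of Section~2 or merged into the time integral on the right-hand side.

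I do not expect a genuine obstacle: the nonlinear analysis has already been carried out in Lemma~\ref{lem:apriori_zeta_H1} and its superconvergent refinement \eqref{eq:supercgt}, so this corollary is essentially the Poincar\'e shadow of that $H^1$ superconvergence result. The only care needed is bookkeeping for the exponent $r+1$ across the two regimes of $\delta$ in Lemma~\ref{lem:apriori_eta}, and ensuring that the choice of discrete initial data is used consistently so that the $\|\nabla\zeta(0)\|^2$ contribution vanishes and no leftover $h^{2r}\|\psi_0\|_{2+\delta}^2$ term appears on the right-hand side.
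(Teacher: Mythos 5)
Your proposal is correct and follows essentially the same route as the paper: the paper obtains the corollary by exactly this triangle-inequality splitting, using the $L^2$ bound on $\eta$ from Lemma~\ref{lem:apriori_eta}, the superconvergence estimate \eqref{eq:supercgt} with $\zeta(0)=0$ forced by choosing the elliptic projection as initial data, and the Poincar\'e inequality to pass from $\|\nabla\zeta\|$ to $\|\zeta\|$. The only point you flag that the paper glosses over --- absorbing the pointwise $h^{2(r+1)}\|\psi(t)\|_{2+\delta}^2$ term from $\|\eta(t)\|^2$ into the stated bound --- is handled the same way implicitly in the paper.
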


In addition, if we assume that the mesh is quasi-uniform and the error in the elliptic projection $\eta$
for $\delta \geq 1$ 
satisfies
$$\norm{\eta}_{L^\infty(\Omega)} \leq C h^{r+1}\|\psi\|_{W^{2+\delta,\infty}(\Omega)}.$$
Then, a use of superconvergence result \eqref{eq:supercgt} with the discrete Sobolev inequality,
\begin{equation*}
    \norm{\zeta}_{L^{\infty}(\Omega)} \le C | \log h|^{1/2}  \norm{\nabla\zeta},
\end{equation*}
shows the following maximum norm estimate for $\delta \geq 1$
\begin{equation*}
    \norm{(\psi-\psi_h)(t)}^2_{L^{\infty}(\Omega)}\leq Ch^{2(r+1)}e^{-2\alpha t+\hat{C}} | \log h| \left( \|\psi\|_{W^{2+\delta,\infty}(\Omega)}^2 + \int_0^te^{2\alpha s}(\norm{\psi}_{2+\delta}^2+\norm{\partial_t\psi}_{2+\delta}^2)\ds\right).
\end{equation*}

\begin{remark}
    From Lemma \ref{lem:apriori_zeta_H1}, we arrive using the inverse inequality at optimal $H^2$ estimate only for $\delta\geq 1$ as
    $$\|\Delta \zeta (t)\| \leq C h^{-1}  \|\nabla \zeta(t)\|.$$
    Therefore, a use of triangle inequality with estimate of $\eta$ in $H^2$ norm from Lemma~\ref{lem:apriori_eta} shows for $\delta\geq 1$
    \begin{equation*}
        \norm{(\psi-\psi_h)(t)}^2_2 \leq C e^{-2\alpha t+\hat{C}}h^{2 r} \left(\norm{\psi_0}_{2+\delta}^2+ \int_0^te^{2\alpha s}(\norm{\psi}_{2+\delta}^2+\norm{\partial_t\psi}_{2+\delta}^2)\ds\right).
    \end{equation*}
\end{remark}
\begin{remark}
    When $F=0$ or $F$ exponentially decays in time, the error decays exponentially in time.
    Moreover, if $F\in L^2(L^2),$ then from the regularity results, it is easy check that
    $\hat{C}=C(M,\nu)\int_0^t|\psi|_2^2\ds \leq C,$ that is, $\hat{C}$ becomes a constant, independent of time. Therefore, error
    analysis is valid uniformly in time. However, if $F\in L^{\infty}(L^2),$ then $\hat{C}\leq C\;t$  and $e^{Ct}$ term appears in the error analysis, making it local. Like in Navier-Stokes equations, it may be possible to prove the uniform validity of the  error
 estimates  with respect to time under the smallness assumption of the data.
\end{remark}

\section{Backward Euler Method}\label{sec:back}
This section discusses a fully discrete scheme based on the backward Euler method applied to the semi-discrete problem and derives its convergence analysis.

Let $\Delta t$ be the time step and $t_n=n\Delta t$, $n=0,1,2,\ldots,N$.
For a continuous function $\varphi$ on time, let $\ddt\varphi(t_n)=\frac{\varphi(t_n)-\varphi(t_{n-1})}{\Delta t}$.
Then, the backward Euler scheme is to find $\Psi^n\in S_h^0$, $n=1,2,\ldots,N$ such that
\begin{equation}\label{eq:fdiscprob}
    \einner{\nabla \ddt\Psi^n,\nabla\chi}+\nu a(\Psi^n,\chi)+b(\Psi^n;\Psi^n,\chi)+\mu b_0(\Psi^n,\chi)=\mu\einner{F,\chi}\quad\forall\chi\in S_h^0
\end{equation}
where $\Psi^0=\psi_{0,h}\in S_h^0.$ 
Since $S_h^0$ is finite dimensional, at each time level $t=t_n$, \eqref{eq:fdiscprob} leads to a system of nonlinear algebraic equations.
Therefore, we need to discuss existence and uniqueness result for the fully discrete system.

\subsection{Uniform \textit{a priori} bounds.}
This subsection is on \textit{a priori} bounds of the discrete solution, which are valid uniformly in time.
\begin{lemma}\label{lem:gronwalltype}
    With $0\leq \alpha<\frac{\nu\lambda_1}{4}$, choose $\Delta t>0$ so that $0<\Delta t<\Delta t_0$ satisfying $(\frac{\nu \Delta t\lambda_1}{4}+1)>e^{\alpha \Delta t}$.
Then for $N\geq 1$
    \begin{equation*}
        \norm{\nabla \Psi^N}^2+2\beta^{*}e^{2\alpha t_N}\Delta t\sum_{n=1}^Ne^{2\alpha t_n}\norm{\Delta \Psi^n}^2\leq e^{-\alpha t_N}\norm{\nabla\Psi^0}^2+\frac{e^{-\alpha \Delta t}}{2\alpha \nu} \big(1- e^{-2\alpha t_N}\big)\;\norm{F}_{-2}^2,
    \end{equation*}
    where $\beta^*=\big(e^{-\alpha \Delta t} \frac{\nu}{2}-\frac{1-e^{-\alpha \Delta t}}{\Delta t\lambda_1}\big)>0.$
\end{lemma}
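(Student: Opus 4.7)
The plan is to mimic the continuous Lemma~\ref{lem:qge_prioribound}, with the continuous test $\chi = e^{2\alpha t}\psi$ replaced by a discrete summation-by-parts argument using exponential weights. First I would test \eqref{eq:fdiscprob} with $\chi = \Psi^n$ to eliminate the nonlinear term (since $b(\Psi^n;\Psi^n,\Psi^n) = 0$) and the linear skew term (since $b_0(\Psi^n,\Psi^n) = 0$). The backward Euler time derivative satisfies the standard identity $\langle \bar\partial_t \nabla \Psi^n, \nabla \Psi^n\rangle \geq \tfrac{1}{2}\bar\partial_t \|\nabla \Psi^n\|^2$, and Young's inequality on the forcing term $\mu\langle F,\Psi^n\rangle \leq \mu\|F\|_{-2}\|\Delta\Psi^n\|$ absorbs half of the viscous dissipation, leaving the basic discrete energy inequality
\begin{equation*}
    \bar\partial_t \|\nabla \Psi^n\|^2 + \nu \|\Delta \Psi^n\|^2 \leq \tfrac{\mu^2}{\nu}\|F\|_{-2}^2.
\end{equation*}

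The next step introduces exponential weighting. Setting $a^n = \|\nabla\Psi^n\|^2$ and multiplying by $\Delta t\, e^{\alpha t_n}$, I would use the discrete analog of the product rule
\begin{equation*}
    e^{\alpha t_n}(a^n - a^{n-1}) = (e^{\alpha t_n}a^n) - e^{\alpha \Delta t}(e^{\alpha t_{n-1}}a^{n-1}),
\end{equation*}
so the weighted energy $b^n := e^{\alpha t_n} a^n$ satisfies $b^n - b^{n-1} + \nu\Delta t\, e^{\alpha t_n}\|\Delta\Psi^n\|^2 \leq (e^{\alpha\Delta t}-1)b^{n-1} + \tfrac{\mu^2\Delta t}{\nu}e^{\alpha t_n}\|F\|_{-2}^2$. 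The leakage term $(e^{\alpha\Delta t}-1)b^{n-1}$ must then be bounded via the Poincar\'e inequality $\|\nabla v\|^2 \leq \lambda_1^{-1}\|\Delta v\|^2$ and absorbed into the dissipation. A careful splitting (roughly, keeping a fraction $\nu e^{-\alpha\Delta t}/2$ of $\|\Delta\Psi^n\|^2$ for dissipation and dedicating the complementary part to control $b^{n-1}$ after shifting the index) is exactly what produces the constant $\beta^{*} = \tfrac{\nu}{2}e^{-\alpha\Delta t} - \tfrac{1-e^{-\alpha\Delta t}}{\Delta t\,\lambda_1}$. The hypothesis $(\tfrac{\nu\Delta t\lambda_1}{4}+1) > e^{\alpha\Delta t}$ is precisely the threshold guaranteeing $\beta^{*} > 0$; intuitively, the backward Euler contraction factor $(1 + \tfrac{\nu\Delta t\lambda_1}{4})^{-1}$ must beat the growth rate $e^{\alpha\Delta t}$ of the weight.

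Once I have the one-step recursion with strictly positive $\beta^{*}$, the final step is pure summation: sum $n = 1,\ldots,N$ so that the weighted energies telescope, and evaluate the geometric series $\Delta t\sum_{n=1}^{N} e^{\alpha t_n}\|F\|_{-2}^2 = \Delta t\cdot\tfrac{e^{\alpha\Delta t}(e^{\alpha t_N}-1)}{e^{\alpha\Delta t}-1}\|F\|_{-2}^2$. Dividing through by the appropriate power of $e^{\alpha t_N}$ and using the elementary bound $\tfrac{\Delta t}{e^{\alpha\Delta t}-1} \leq \tfrac{1}{\alpha}$ converts the finite geometric sum into the continuous factor $\tfrac{e^{-\alpha\Delta t}}{2\alpha\nu}(1-e^{-2\alpha t_N})$ claimed in the statement, completing the proof.

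The main obstacle, and where I expect to spend the most care, is the bookkeeping in the second paragraph: unlike the continuous case where multiplying by $e^{2\alpha t}$ and differentiating gives an exact product rule, the discrete $\bar\partial_t$ does not commute with the exponential weight, so every manipulation generates a remainder of order $(e^{\alpha\Delta t}-1)$ that must be re-absorbed via Poincar\'e. Getting the sharp threshold $\tfrac{\nu\Delta t\lambda_1}{4}+1 > e^{\alpha\Delta t}$ (rather than a looser version) and the exact form of $\beta^{*}$ requires an optimal splitting of the dissipation $\nu\Delta t\|\Delta\Psi^n\|^2$, and this is the delicate step; everything else is routine.
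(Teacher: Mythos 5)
Your proposal is correct and follows essentially the same route as the paper: test with the (exponentially weighted) solution, kill the nonlinear and skew terms, absorb the commutator between $\bar\partial_t$ and the weight $e^{\alpha t_n}$ into the dissipation via the Poincar\'e inequality \eqref{eq:eigen_H1} to produce exactly $\beta^*$ and the threshold on $\Delta t$, then telescope and bound the geometric series by $\tfrac{1}{2\alpha}(e^{2\alpha t_N}-1)$. The only (cosmetic) difference is that the paper weights first and writes $e^{\alpha t_n}\bar\partial_t\Psi^n=e^{\alpha\Delta t}\bar\partial_t\hat\Psi^n-\tfrac{e^{\alpha\Delta t}-1}{\Delta t}\hat\Psi^n$, which places the leakage term at index $n$ rather than $n-1$; your version as written would leave a harmless extra boundary term in $\|\Delta\Psi^0\|$ after the index shift, which the paper's arrangement avoids.
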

\begin{proof}
    Choose $\chi=e^{2\alpha t_n}\Psi^n$ in \eqref{eq:fdiscprob}.
Note that $b(\Psi^n;\Psi^n,\Psi^n)=0$, $b_0(\Psi^n,\Psi^n)=0$ and
    \begin{equation}\label{eq:fdisc-1}
        e^{\alpha t_n}\ddt\Psi^n=e^{\alpha \Delta t}\ddt(\hat\Psi^n)-\frac{e^{\alpha \Delta t}-1}{\Delta t}\hat\Psi^n.
    \end{equation}
    Then after multiplying by $e^{-\alpha \Delta t}$ and with $\hat\Psi^n=e^{\alpha t_n}\Psi^n$, it follows that
    \begin{equation*}
        (\ddt\hat\Psi^n,\hat\Psi^n)+\nu\alpha^{-\alpha \Delta t}a(\hat\Psi^n,\hat\Psi^n)-\left(\frac{1-e^{-\alpha \Delta t}}{\Delta t}\right)\norm{\nabla\hat\Psi^n}^2=\mu e^{-\alpha \Delta t}(e^{\alpha t_n}F,\hat\Psi^n).
    \end{equation*}
    Observe that
    \begin{equation}\label{eq:fdisc-3}
        (\ddt \nabla\hat\Psi^n,\nabla\hat\Psi^n)\geq \frac{1}{2\Delta t}\ddt \norm{\nabla \hat\Psi^n}^2.
    \end{equation}
    Apply the Poincar\'e inequality to arrive at
    \begin{equation*}
        \frac{1}{2}\ddt \norm{\nabla\hat\Psi^n}^2+\big(e^{-\alpha \Delta t}\nu-\frac{1-e^{-\alpha \Delta t}}{\Delta t}\frac{1}{\lambda_1}\big)|\hat\Psi^n|_2^2\leq \frac{\mu e^{-\alpha \Delta t}}{2\nu}\norm{e^{\alpha t_n}F}_{-2}^2+\frac{\nu}{2}e^{-\alpha \Delta t}|\hat\Psi^n|_2^2.
    \end{equation*}
    With $\beta^*=\big(e^{-\alpha \Delta t}\frac{\nu}{2}-\frac{1-e^{-\alpha \Delta t}}{\Delta t\lambda_1}\big)>0$, take summation with respect to $n$ up to $N$,  and use $\Delta \sum_{n=0}^{N} e^{2\alpha t_n} \leq \frac{1}{2\alpha} (e^{2\alpha t_N} -1).$ Then, multiply the resulting equation by $e^{-\alpha t_N}$
to obtain
    \begin{equation*}
        \norm{\nabla\Psi^N}^2+2\beta^*e^{-\alpha t_N}\sum_{n=1}^Ne^{2\alpha t_n}|\Psi^n|_2^2\leq e^{-\alpha t_N}\norm{\nabla\Psi^0}^2+\frac{e^{\alpha \Delta t}}{2\alpha\nu} \big(1- e^{-2\alpha t_N}\big)\;  \norm{F}_{-2}^2.
    \end{equation*}
    This completes the rest of the proof.
\end{proof}
In order to estimate $\lim_{N\rightarrow\infty}|\Delta\Psi^N|^2\leq \frac{\mu^2}{\nu^2}\norm{F}_{-2}^2$, following Pany et al. \cite{pany2017}, we shall apply the following counterpart of the L'Hospital rule.
For a proof, see pp. 85--89 of \cite{muresan2009}.
\begin{theorem}{(Stolz-Cesaro Theorem).}
    Let $\{\varphi^n\}_{n=0}^\infty$ be a sequence of numbers, and let $\{w^n\}_{n=0}^N$ be a strictly monotone and divergent sequence.
    If
    \begin{equation*}
        \lim_{n\rightarrow\infty}\left(\frac{\varphi^n-\varphi^{n-1}}{w^n-w^{n-1}}\right)=\ell,
    \end{equation*}
    then
    \begin{equation*}
        \lim_{n\rightarrow\infty}\left(\frac{\varphi^n}{w^n}\right)=\ell.
    \end{equation*}
\end{theorem}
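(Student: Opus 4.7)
The plan is to give the classical Stolz--Ces\`aro argument via a telescoping estimate. Without loss of generality, I would reduce to the case where $\{w^n\}$ is strictly increasing with $w^n\to+\infty$: if it is strictly decreasing and diverges to $-\infty$, replace $(\varphi^n,w^n)$ by $(-\varphi^n,-w^n)$, which preserves both the hypothesis (the quotient of differences is unchanged) and the target conclusion.

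Assume first that $\ell\in\mathbb{R}$. Given $\varepsilon>0$, choose $N_0$ so large that for every $n>N_0$,
\[
(\ell-\varepsilon)(w^n-w^{n-1}) \;<\; \varphi^n-\varphi^{n-1}\;<\;(\ell+\varepsilon)(w^n-w^{n-1}),
\]
where the inequalities are preserved because $w^n-w^{n-1}>0$. Summing these from $k=N_0+1$ to $n$ telescopes on both sides and yields
\[
(\ell-\varepsilon)(w^n-w^{N_0}) \;<\; \varphi^n-\varphi^{N_0}\;<\;(\ell+\varepsilon)(w^n-w^{N_0}).
\]
Since $w^n\to+\infty$, we may divide by $w^n>0$ for large $n$ and rearrange to obtain
\[
\left|\frac{\varphi^n}{w^n}-\ell\right| \;\leq\; \frac{|\varphi^{N_0}-\ell\, w^{N_0}|}{w^n} + \varepsilon\left|1-\frac{w^{N_0}}{w^n}\right|.
\]
With $N_0$ fixed and $w^n\to\infty$, the first term on the right tends to $0$, while the second is bounded above by $\varepsilon$. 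Hence $\limsup_{n\to\infty}|\varphi^n/w^n-\ell|\leq\varepsilon$, and since $\varepsilon$ was arbitrary, $\varphi^n/w^n\to\ell$.

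For the cases $\ell=\pm\infty$, I would argue separately: if $\ell=+\infty$, then for every $M>0$ there is $N_0$ with $\varphi^n-\varphi^{n-1}\geq M(w^n-w^{n-1})$ for $n>N_0$; summing and dividing by $w^n$ gives $\liminf_{n\to\infty}\varphi^n/w^n\geq M$, and letting $M\to\infty$ concludes. The case $\ell=-\infty$ is symmetric. The only real subtlety (and what I would flag as the main obstacle) is the careful handling of signs during the reduction step and the preservation of the strict inequalities after division, together with the separate treatment of the infinite-$\ell$ case; once these are accounted for, the proof is essentially the three-line telescoping computation above.
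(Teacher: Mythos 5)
Your proof is correct and is the classical telescoping Stolz--Ces\`aro argument; the paper itself does not prove this theorem but simply cites pp.~85--89 of Muresan's book, where essentially this same argument appears, so there is nothing to contrast. The only cosmetic point is that when $w^{N_0}<0$ the term $\varepsilon\left|1-w^{N_0}/w^n\right|$ is not literally bounded by $\varepsilon$ for each $n$, but it converges to $\varepsilon$ as $n\to\infty$, so your $\limsup$ conclusion is unaffected.
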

In the estimate of Lemma~\ref{lem:gronwalltype}, now set $\varphi^N=\frac{\nu}{2}e^{-\alpha \Delta t}\Delta t\sum_{n=0}^N|\Delta\Psi^n|_2^2$ and $w^N=e^{2\alpha t_N}$.
Then, an appeal to the Scholz-Cesaro theorem yields
\begin{equation*}
    \frac{\nu}{2(1-e^{-2\alpha \Delta t)}}e^{-\alpha \Delta t}\Delta t\limsup_{N\rightarrow\infty}|\Psi^N|_2^2\leq \Delta t\frac{N^2}{\nu(1-e^{-\alpha \Delta t})}\norm{F}_{-2}^2,
\end{equation*}
and hence,
\begin{equation*}
    \limsup_{N\rightarrow\infty}|\Psi^N|_2^2\leq \frac{\mu^2}{\nu^2}\norm{F}_{-2}^2.
\end{equation*}
As a consequence, it is possible to find some large $N_0\in\mathbb{N}$ so that for $N\geq N_0$
\begin{equation*}
    |\Psi^N|_2\leq \frac{\mu}{2 \nu}\norm{F}_{-2} <C.
\end{equation*}

\subsection{Wellposedness and existence of discrete attractor.}

For its wellposedness, we appeal to the following variant of the Brouwer fixed point theory.
For proof, see Kesavan \cite{kesavan1989}.
\begin{lemma}\label{lem:brouwer}
    Let $X$ be a finite dimensional Hilbert space with inner product $(\cdot,\cdot)_X$ and norm $\norm{\cdot}_X$.
Further, let $\mathcal{F}$ be a continuous map from $X$ into itself such that $(\mathcal{F}(\chi),\chi)>0$ for all $\chi\in X$ with $\norm{\chi}_X=R>0$.
Then, there exists $\chi^\star\in X$ with $\norm{\chi^\star}_X\leq R$ such that $\mathcal{F}(\chi^\star)=0$.
\end{lemma}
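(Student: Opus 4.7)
The plan is to prove the lemma by contradiction via the classical Brouwer fixed point theorem, which applies because $X$ is finite-dimensional so that the closed ball $\bar{B}_R = \{\chi\in X : \|\chi\|_X \leq R\}$ is compact and convex. I would suppose, for contradiction, that $\mathcal{F}(\chi)\neq 0$ for every $\chi \in \bar{B}_R$; the goal is then to contradict the hypothesis $(\mathcal{F}(\chi),\chi)_X > 0$ on the sphere $\|\chi\|_X = R$.

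Next I would introduce the auxiliary map $g : \bar{B}_R \to \bar{B}_R$ defined by
\begin{equation*}
g(\chi) = -\frac{R\,\mathcal{F}(\chi)}{\|\mathcal{F}(\chi)\|_X}.
\end{equation*}
This map is well-defined because, under the contradiction assumption, the denominator is nonzero everywhere on $\bar{B}_R$, and it is continuous because $\mathcal{F}$ is continuous. Moreover $\|g(\chi)\|_X = R$, so $g$ indeed maps $\bar{B}_R$ into itself. Brouwer's fixed point theorem then yields some $\chi^\star \in \bar{B}_R$ with $g(\chi^\star) = \chi^\star$; by construction $\|\chi^\star\|_X = R$, so the positivity hypothesis applies precisely at $\chi^\star$.

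To close the argument, I would substitute the fixed point identity, which is equivalent to $\mathcal{F}(\chi^\star) = -\|\mathcal{F}(\chi^\star)\|_X\,\chi^\star/R$, into the inner product and obtain
\begin{equation*}
(\mathcal{F}(\chi^\star),\chi^\star)_X = -\frac{\|\mathcal{F}(\chi^\star)\|_X}{R}\,\|\chi^\star\|_X^2 = -R\,\|\mathcal{F}(\chi^\star)\|_X < 0,
\end{equation*}
which contradicts the hypothesis and forces the conclusion that $\mathcal{F}$ must vanish at some $\chi^\star \in \bar{B}_R$. The strategy relies on nothing beyond Brouwer's theorem, so there is no real obstacle. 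The one subtle point worth flagging is that the positivity condition is assumed only on the sphere rather than on the whole ball; however, the renormalization in the construction of $g$ is arranged precisely so that any fixed point is automatically forced onto the sphere, which is exactly where the hypothesis is available to furnish the contradiction.
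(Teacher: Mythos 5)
Your proof is correct, and it is the standard argument for this variant of the Brouwer fixed point theorem: the paper itself gives no proof, deferring instead to Kesavan's book, where essentially this same contradiction argument (normalizing $-\mathcal{F}$ to map the closed ball onto the sphere and applying Brouwer) is used. The key computation $(\mathcal{F}(\chi^\star),\chi^\star)_X=-R\,\|\mathcal{F}(\chi^\star)\|_X<0$ and the observation that any fixed point of $g$ necessarily lies on the sphere, where the positivity hypothesis is available, are both handled correctly.
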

\begin{theorem}\label{thm:uniqueness}
    Assume that $\Psi^0,\cdots,\Psi^{n-1}$ are given.
    Then there exists a unique solution $\Psi^n$ of \eqref{eq:fdiscprob}.
\end{theorem}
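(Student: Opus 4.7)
The plan is to apply the variant of the Brouwer fixed point theorem in Lemma~\ref{lem:brouwer} to get existence, and then establish uniqueness by a direct energy argument exploiting the uniform bound on $\|\Delta \Psi^n\|$ coming from Lemma~\ref{lem:gronwalltype}. Take $X = S_h^0$ equipped with the inner product $(v,w)_X := (\nabla v,\nabla w) + \Delta t \nu(\Delta v, \Delta w)$ (any equivalent choice works since $S_h^0$ is finite dimensional). Define $\mathcal{F}: X \to X$ by requiring that for every $\chi \in S_h^0$,
\begin{equation*}
(\mathcal{F}(v),\chi)_X := (\nabla v, \nabla\chi) + \Delta t\,\nu\, a(v,\chi) + \Delta t\, b(v;v,\chi) + \Delta t\,\mu\, b_0(v,\chi) - (\nabla\Psi^{n-1},\nabla\chi) - \Delta t\,\mu (F,\chi).
\end{equation*}
By the Riesz representation theorem on the finite-dimensional space $S_h^0$, this defines $\mathcal{F}(v)$ uniquely, and a solution of \eqref{eq:fdiscprob} is exactly a zero of $\mathcal{F}$.

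The first step is continuity of $\mathcal{F}$. The linear terms are obviously continuous, while the trilinear form $v \mapsto b(v;v,\chi)$ is a polynomial map on the finite-dimensional space $S_h^0$ and hence continuous. Next, I test with $\chi = v$ and use the antisymmetries $b(v;v,v) = 0$ and $b_0(v,v) = 0$ to obtain
\begin{equation*}
(\mathcal{F}(v),v)_X = \|\nabla v\|^2 + \Delta t\,\nu\,\|\Delta v\|^2 - (\nabla\Psi^{n-1},\nabla v) - \Delta t\,\mu(F,v).
\end{equation*}
Cauchy--Schwarz and Young's inequality (treating the duality $(F,v) \leq \|F\|_{-2}\|\Delta v\|$) yield
\begin{equation*}
(\mathcal{F}(v),v)_X \geq \tfrac{1}{2}\|\nabla v\|^2 + \tfrac{\Delta t\,\nu}{2}\|\Delta v\|^2 - \tfrac{1}{2}\|\nabla\Psi^{n-1}\|^2 - \tfrac{\Delta t\,\mu^2}{2\nu}\|F\|_{-2}^2.
\end{equation*}
Choosing $R^2 > \|\nabla\Psi^{n-1}\|^2 + (\Delta t\,\mu^2/\nu)\|F\|_{-2}^2$ makes the right-hand side strictly positive whenever $\|v\|_X = R$, so Lemma~\ref{lem:brouwer} gives the existence of $\Psi^n \in S_h^0$ with $\mathcal{F}(\Psi^n) = 0$.

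For uniqueness, suppose $\Psi^n_1, \Psi^n_2$ both solve \eqref{eq:fdiscprob}, and set $W = \Psi^n_1 - \Psi^n_2$. Subtracting the two equations, testing with $\chi = W$, and using $b(\Psi^n_1;\Psi^n_1,\chi) - b(\Psi^n_2;\Psi^n_2,\chi) = b(W;\Psi^n_1,\chi) + b(\Psi^n_2;W,\chi)$ together with $b(\Psi^n_2;W,W) = 0$ and $b_0(W,W) = 0$ yields
\begin{equation*}
\tfrac{1}{\Delta t}\|\nabla W\|^2 + \nu\|\Delta W\|^2 = -b(W;\Psi^n_1,W).
\end{equation*}
Bounding the right-hand side by Ladyzhenskaya and Young's inequality with exponents $(4/3, 4)$ exactly as in the continuity proof of Theorem~\ref{thm:qge_wellposedness} gives
\begin{equation*}
|b(W;\Psi^n_1,W)| \leq \tfrac{\nu}{2}\|\Delta W\|^2 + C\nu^{-3}\|\nabla\Psi^n_1\|^2\|\Delta\Psi^n_1\|^2\|\nabla W\|^2.
\end{equation*}
Absorbing the diffusion term and invoking the uniform bound on $\|\Delta\Psi^n_1\|$ from Lemma~\ref{lem:gronwalltype}, I obtain
\begin{equation*}
\bigl(\tfrac{1}{\Delta t} - C\nu^{-3}\|\nabla\Psi^n_1\|^2\|\Delta\Psi^n_1\|^2\bigr)\|\nabla W\|^2 + \tfrac{\nu}{2}\|\Delta W\|^2 \leq 0.
\end{equation*}
For $\Delta t$ sufficiently small (in the sense that $\Delta t \cdot C\nu^{-3}\|\nabla\Psi^n_1\|^2\|\Delta\Psi^n_1\|^2 < 1$, which is guaranteed once $\Delta t \leq \Delta t_0$ is chosen in terms of the uniform bounds), this forces $W \equiv 0$ and uniqueness follows.

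The main obstacle is ensuring that the smallness condition on $\Delta t$ arising in the uniqueness step is consistent with (and not more restrictive than) the condition $(\frac{\nu \Delta t \lambda_1}{4} + 1) > e^{\alpha \Delta t}$ assumed in Lemma~\ref{lem:gronwalltype}; this is where the uniform-in-time bound on $\|\Delta\Psi^n\|$ is essential, as it prevents the admissible time step from shrinking as $n \to \infty$.
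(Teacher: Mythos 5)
Your proposal is correct and follows essentially the same route as the paper: existence via the Brouwer-type Lemma~\ref{lem:brouwer} applied to the same map $\mathcal{F}$ (you merely use a slightly different inner product on $S_h^0$ and Young's inequality in place of factoring out $\|\nabla\Phi\|$), and uniqueness via the same energy argument with the Ladyzhenskaya--Young estimate of $b(W;\Psi_1^n,W)$ and a smallness condition on $\Delta t$ tied to the bound on $\|\Delta\Psi_1^n\|$. Your single-time-level treatment of uniqueness is in fact a cleaner rendering of what the paper does by summing and invoking the discrete Gronwall inequality, and the caveat you raise about the $\Delta t$ restriction is present in the paper's proof as well.
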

\begin{proof}
    In order to apply Lemma~\ref{lem:brouwer}, set $X=S_h^0$ and define $\mathcal{F}$ as
    \begin{equation}\label{eq:solOp}
        \begin{aligned}
        (\mathcal{F}(\Phi),\chi)
        &=(\nabla\Phi,\nabla\chi)+\nu \Delta ta(\Phi,\chi)+\Delta tb(\Phi;\Phi,\chi)\\
        &\quad+\mu \Delta t b_0(\Phi,\chi)-\mu \Delta t(F^n,\chi)-(\nabla \Psi^{n-1},\nabla\chi).
        \end{aligned}
    \end{equation}
    Choose $\chi=\Phi$ in \eqref{eq:solOp}, then with $b(\Phi;\Phi,\Phi)=0$ and $\mu(\partial_x\Phi,\phi)=0$, we arrive at
    \begin{equation*}
        \begin{aligned}
            (\mathcal{F}(\Phi),\Phi) & \geq \norm{\nabla\Phi}^2+\nu \Delta t|\Phi|_2^2-(\mu \Delta t\norm{F^n}_{-1}+\norm{\nabla\Psi^{n-1}})\norm{\nabla\Phi}_{2} \\
                                     & \geq \norm{\nabla\Phi}\big(\norm{\nabla\Phi}-(\mu \Delta t\norm{F}_{-1}+\norm{\nabla\Psi^{n-1}})\big).
        \end{aligned}
    \end{equation*}
    Now choose $\norm{\nabla\Phi}\geq 2(\mu\norm{F}_{-1}+\norm{\nabla\Psi^{n-1}})=R$ so that $(\mathcal{F}(\Phi),\Phi)>0$.
    Therefore, an application of Lemma~\ref{lem:brouwer} yields existence of $\Psi^n\in S_h^0$ such that $\mathcal{F}(\Psi^n)=0$.
    For uniqueness, let $\Psi^n_1$ and $\Psi^n_2$ be two solutions of \eqref{eq:fdiscprob}.
    Setting $\Phi^n=\Psi^n_1-\Psi^n_2$, it satisfies
    \begin{equation}\label{eq:uniqueerr}
        (\nabla\ddt\Phi^n,\nabla\chi)+\nu a(\Phi^n,\chi)+\mu  b_0(\Phi^n,\chi)= -\big(b(\Psi^n_1;\Psi^n_1,\chi)-b(\Psi^n_2;\Psi^n_2,\chi)\big).
    \end{equation}
    Choose $\chi=\Phi^n$ in \eqref{eq:uniqueerr}, then we obtain
    \begin{equation}
        \frac{1}{2}\ddt\norm{\nabla\Phi^n}^2+\nu|\Phi^n|_2^2\leq b(\Psi_1^n;\Psi_2^n,\Phi^n)-b (\Psi^n_1;\Psi^n_1,\Phi^n)=-b(\Psi^n_2;\Phi^n,\Phi^n)-b(\Phi^n;\psi^n_1,\Phi^n).
    \end{equation}
    Since $b(\Psi_2^n;\cdot,\cdot)$ is skew-symmetric, it remains to estimate the last term on the right-hand side.
    \begin{equation*}
        \begin{aligned}
            |-b (\Phi^n;\Psi^n_1,\Phi^n)|
             & \leq M|\Phi^n|_2^{3/2}\norm{\nabla\Psi_1^n}^{1/2}|\Psi^n_1|_2^{1/2}\norm{\nabla\Phi^n}^{1/2} \\
             & \leq \frac{\nu}{2}|\Phi^n|_2^2+C(M,\nu)\norm{\nabla\Phi^n}^2|\Psi^n_1|_2^2\norm{\nabla\Phi^n}^2 \\
             & \leq \frac{\nu}{2}|\Phi^n|_2^2+C(M,\nu)|\Psi^n_1|_2^2\norm{\nabla\Phi^n}^2.
        \end{aligned}
    \end{equation*}
    Here, we have used $ab\leq a^p/p+ b^q/q$ and boundedness of $\norm{\nabla\Psi^n_1}$.
    On substitution, summation on $n$ from $n=1$ to $N$ with $\Phi^0=0$ yields
    \begin{equation*}
        (1- C \Delta t |\Psi_1^N|_2^2) \norm{\nabla\Phi^N}^2+\nu \Delta t\sum_{n=1}^N|\Phi^N|_2^2\leq C\Delta t\sum_{n=1}^{N-1} |\Psi_1^n|_2^2\norm{\nabla\Phi^n}^2.
    \end{equation*}
    Choose $N$ large so that $|\Phi^N|_2\leq C$.
Then, for small $\Delta t$, $(1-C\Delta t)=1/2$ and we now arrive at
    \begin{equation*}
        \norm{\nabla\Phi^N}^2+2\nu \Delta t\sum_{n=1}^N|\Phi^n|_2^2\leq C\Delta t\sum_{n=1}^{N-1}|\Psi_1^n|_2^2\norm{\Phi^n}^2.
    \end{equation*}
    Apply the discrete Gronwall inequality to infer $\norm{\nabla\Phi^N}=0$, that is $\Phi^N=0$ which implies uniqueness.
\end{proof}
\begin{remark}
    From Theorem~\ref{thm:uniqueness}, given $\Psi^{n-1}\in S_h^0$, there is a unique solution $\Psi^n\in S_h^0$ which in turn, defines a map $\mathcal{S}^n:S_h^0\rightarrow S_h^0$ such that $\mathcal{S}^n(\Psi^{n-1})=\Psi^n$ which is continuous.
\end{remark}
As a consequence, we obtain the following result.
\begin{corollary}
    There exists a bounded absorbing set
    \begin{equation*}
        B^h_{\rho_0}(0)\coloneq \{v\in S_h^0:\norm{\nabla v}\leq \rho_0\}
    \end{equation*}
    where
    \begin{equation*}
        \rho_0^2=\frac{e^{2\alpha \Delta t}}{\alpha\nu}\norm{F}_{-2}^2,
    \end{equation*}
    in the sense that for $\rho>0$, there is $t_{n^*} (\rho)>0$ such that for all $t_N\geq t_{n^*}$,
    $${\mathcal{S}}^N(t_N) B^h_{\rho} \subset \mathcal{S}^N(t_N) B^h_{\rho_0}.$$
\end{corollary}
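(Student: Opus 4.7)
The plan is to mirror the continuous argument of Lemma~\ref{lem:attractor-H1} and deduce the corollary directly from the uniform-in-time a priori estimate already obtained in Lemma~\ref{lem:gronwalltype}. The key observation is that the constant $\rho_0^2$ defined in the statement is precisely (up to a factor absorbing the backward-Euler discrepancy $e^{\pm \alpha\Delta t}$) the asymptotic part of that estimate, so the initial-data term on the right-hand side must be made small.

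First, let $\Psi^0 \in B^h_\rho(0)$, so that $\norm{\nabla \Psi^0} \le \rho$. Dropping the nonnegative summation term on the left of the estimate in Lemma~\ref{lem:gronwalltype} yields
\begin{equation*}
    \norm{\nabla \Psi^N}^2 \le e^{-\alpha t_N}\rho^2 + \frac{e^{\alpha \Delta t}}{2\alpha \nu}\bigl(1-e^{-2\alpha t_N}\bigr)\norm{F}_{-2}^2,
\end{equation*}
which, after bounding the parenthesis by $1$, is at most $e^{-\alpha t_N}\rho^2 + \tfrac{1}{2}\rho_0^2$ with the chosen $\rho_0^2=\frac{e^{2\alpha\Delta t}}{\alpha\nu}\norm{F}_{-2}^2$. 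Requiring $e^{-\alpha t_N}\rho^2 \le \tfrac{1}{2}\rho_0^2$ gives
\begin{equation*}
    t_N \ge t_{n^*}(\rho) := \frac{1}{\alpha}\log\!\left(\frac{2\rho^2}{\rho_0^2}\right),
\end{equation*}
which in discrete form means $n \ge n^*(\rho) := \lceil t_{n^*}(\rho)/\Delta t\rceil$. For $N \ge n^*(\rho)$ one then has $\norm{\nabla \Psi^N}^2 \le \rho_0^2$, i.e.\ $\mathcal S^N(\Psi^0)\in B^h_{\rho_0}(0)$. If instead $\rho \le \rho_0/\sqrt 2$, the inclusion is immediate for every $N\ge 0$ without any waiting time, so $B^h_{\rho_0}(0)$ absorbs every bounded set in $S_h^0$.

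The only delicate point is keeping track of the harmless factors $e^{\pm\alpha\Delta t}$ arising from the backward-Euler shift identity \eqref{eq:fdisc-1}; these are bounded uniformly for $0<\Delta t<\Delta t_0$ and therefore do not affect the definition of $\rho_0$ beyond the prefactor displayed in the statement. I expect no genuine obstacle here: once Lemma~\ref{lem:gronwalltype} is available, the argument is a direct discrete analogue of Lemma~\ref{lem:attractor-H1}, the only nontrivial step being the explicit computation of the absorption time $t_{n^*}(\rho)$ and verifying that the Stolz--Cesaro reasoning used earlier is consistent with the constants in $\rho_0^2$.
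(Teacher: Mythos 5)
Your proposal is correct and follows essentially the same route as the paper: drop the nonnegative dissipation sum in Lemma~\ref{lem:gronwalltype}, bound the forcing term by $\tfrac12\rho_0^2$, and choose the absorption time $t_{n^*}(\rho)$ logarithmically so that the initial-data term is also at most $\tfrac12\rho_0^2$, with the small-$\rho$ case handled trivially. If anything, your bookkeeping of the squares and of the $e^{\pm\alpha\Delta t}$ prefactors is more careful than the paper's own sketch.
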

The proof goes parallel to the continuous case.
For simplicity, we indicate its proof briefly.
\begin{proof}
    We now claim that if $\Psi^0\in B_{\rho_1}(0)$ for some $\rho_1>\rho_0/2>0$, then there exists $t_{n^*}=n^*\Delta t$ depending on $\rho_1$ such that for $t_N\ge_{n^*}$, the discrete solution $\Psi^N$ lies in $B_{\rho_0}(0)$.
    Observe from Lemma~\ref{lem:gronwalltype} that
    \begin{equation}
        \norm{\nabla\Psi^N}\leq e^{-2\alpha t_N}\norm{\nabla\Psi^0}+\frac{\rho_0^2}{2}.
    \end{equation}
    To complete the first part of the proof, it is sufficient to show that
    \begin{equation*}
        e^{-\alpha t_N}\norm{\nabla\Psi^0}\leq \frac{\rho_0^2}{2}
    \end{equation*}
    and this holds provided there is $t_{n^*}=n^*\Delta t\geq \frac{1}{\alpha}\log\left(\frac{\rho_1}{\rho_0}\right)$.
    For $\rho_1<\rho_0/2$, the result holds trivially.
    This completes the rest  of the proof.
\end{proof}

\subsection{\textit{A priori} error analysis of the fully discrete scheme.}
This subsection focuses on optimal error estimates of the fully discrete scheme.

Since $\psi(t_n)-\Psi^n= (\psi(t_n) - \psi_h(t_n)) -(\Psi^n-\psi_h(t_n))$ and the estimate of $\psi(t_n)-\psi_h(t_n) $ is known from Section 4, it is now enough to estimate $\bm{e}_h^n:=(\Psi^n-\psi_h(t_n)).$
Now from the semi-discrete problem \eqref{eq:discprob} and the fully discrete scheme
\eqref{eq:discprob}, we obtain with $\psi_h(t_n) =\psi_h^n$ an equation in $\bm{e}_h^n$ for $n=1,2,\ldots,$ as
\begin{equation}\label{eq:error-e-n}
    \begin{aligned}
    (\nabla \ddt \bm{e}_h^n,\nabla\chi)&+\nu a( \bm{e}_h^n,\chi)
    =(\nabla\tau^n, \nabla\chi)\\
     &- \Big(b(\psi_h^n;\psi_h^n, \chi) - b(\Psi^n;\Psi^n,\chi)\Big)
    - \mu b_0( \bm{e}_h^n,\chi)\quad\forall \chi\in S_h^0,
    \end{aligned}
\end{equation}
where
\begin{equation}\label{eq:def-tau-n}
    \tau^n=\partial_t\psi_{h}(t_n)-\bar\partial_t \psi_h^n = \frac{1}{2\Delta t}\displaystyle{\int_{t_{n-1}}^{t_n}}(t-t_n)  \partial_t^2\psi_{h}(t) dt.
\end{equation}
Below, we state and prove the main theorem of this section.
\begin{theorem} \label{thm:error-fully-discrete}
    Let $0\leq \alpha<\frac{\nu\lambda_1}{4}$, and let $\Delta t>0$ satisfies $(\frac{\nu \Delta t\lambda_1}{4}+1)>e^{\alpha \Delta t}$.
    Then, there exists a positive constant
    $C$, independent of $\Delta t $, such that for $n=1,2,\cdots $
    \begin{equation*}
        \|\nabla \bm{e}_h^n\|^2+ \beta_1 \Delta t e^{-2\alpha t_n}\sum_{i=1}^{n}e^{2\alpha t_i} |\nabla \bm{e}_h^i|_2^2
        \leq C (t_N)   (\Delta t)^2.
    \end{equation*}
\end{theorem}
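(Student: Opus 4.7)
The plan is to derive a discrete energy identity for $\bm{e}_h^n$ from the error equation \eqref{eq:error-e-n}, then control the nonlinear and truncation contributions so that the discrete Gronwall inequality delivers the stated bound. I would choose $\chi=e^{2\alpha t_n}\bm{e}_h^n$ in \eqref{eq:error-e-n} and, mirroring the manipulations used in Lemma~\ref{lem:gronwalltype}, rewrite
\begin{equation*}
    e^{\alpha t_n}\ddt \bm{e}_h^n = e^{\alpha\Delta t}\ddt\hat{\bm{e}}_h^n - \frac{e^{\alpha\Delta t}-1}{\Delta t}\hat{\bm{e}}_h^n,
    \qquad \hat{\bm{e}}_h^n := e^{\alpha t_n}\bm{e}_h^n,
\end{equation*}
combined with the standard discrete product inequality $(\ddt\nabla\hat{\bm{e}}_h^n,\nabla\hat{\bm{e}}_h^n)\geq \tfrac{1}{2}\ddt\|\nabla\hat{\bm{e}}_h^n\|^2$ and the Poincar\'e inequality \eqref{eq:eigen_H1}. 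Multiplying through by $e^{-\alpha\Delta t}$ produces an energy identity whose coefficient of $|\hat{\bm{e}}_h^n|_2^2$ is exactly the positive constant $\beta^{\star} = e^{-\alpha\Delta t}(\nu/2)-(1-e^{-\alpha\Delta t})/(\Delta t\,\bar\lambda_1)$, positive under the hypothesis on $\Delta t$.

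Next I would dispose of the nonlinear difference by the usual splitting
\begin{equation*}
    b(\psi_h^n;\psi_h^n,\chi)-b(\Psi^n;\Psi^n,\chi) = -b(\bm{e}_h^n;\psi_h^n,\chi) - b(\Psi^n;\bm{e}_h^n,\chi),
\end{equation*}
observing that the second term vanishes on taking $\chi=e^{2\alpha t_n}\bm{e}_h^n$ by property (i) of $b$, and that the $b_0$-term vanishes as well since $b_0(\bm{e}_h^n,\bm{e}_h^n)=0$. For the remaining term, property (iii) of $b$ together with the Ladyzhenskaya inequality and Young's inequality with $p=4/3$, $q=4$ gives
\begin{equation*}
    |b(\bm{e}_h^n;\psi_h^n,\hat{\bm{e}}_h^n\,e^{\alpha t_n})|
    \leq \frac{\beta^{\star}}{2}|\hat{\bm{e}}_h^n|_2^2
    + C(\nu)\,\|\nabla\psi_h^n\|^2\|\Delta\psi_h^n\|^2\,\|\nabla\hat{\bm{e}}_h^n\|^2,
\end{equation*}
in which the factor $\|\nabla\psi_h^n\|$ is bounded uniformly by Lemma~\ref{lem:discapriori}.

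For the consistency term I would use
\begin{equation*}
    \|\nabla\tau^n\|^2 \leq \frac{\Delta t}{4}\int_{t_{n-1}}^{t_n}\|\nabla\partial_t^2\psi_h(s)\|^2\,\ds,
\end{equation*}
which follows from \eqref{eq:def-tau-n} and Cauchy--Schwarz, so that
\begin{equation*}
    (\nabla\tau^n,\nabla\chi) \leq \frac{\beta^{\star}}{2}|\hat{\bm{e}}_h^n|_2^2 + C(\nu)\,e^{2\alpha t_n}\|\nabla\tau^n\|^2,
\end{equation*}
and after multiplying by $\Delta t$ and summing, the second term telescopes to $C(\nu)(\Delta t)^2\int_0^{t_n}e^{2\alpha s}\|\nabla\partial_t^2\psi_h(s)\|^2\ds$, bounded by a constant by the assumed semi-discrete regularity at the end of Section~3. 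Assembling these estimates, summing over $n=1,\ldots,N$, multiplying by $e^{-2\alpha t_N}$, and using $\bm{e}_h^0=0$ (initial data choice $\Psi^0=\psi_h(0)$) yields
\begin{equation*}
    \|\nabla\bm{e}_h^N\|^2 + \beta^{\star}\,\Delta t\,e^{-2\alpha t_N}\sum_{i=1}^{N}e^{2\alpha t_i}|\bm{e}_h^i|_2^2
    \leq C(\Delta t)^2 + C\,\Delta t \sum_{i=1}^{N-1}\|\nabla\psi_h^i\|^2\|\Delta\psi_h^i\|^2\,e^{-2\alpha(t_N-t_i)}\|\nabla\hat{\bm{e}}_h^i\|^2,
\end{equation*}
to which the discrete Gronwall inequality is applied; the exponential factor $\exp\!\bigl(C\Delta t\sum_i\|\Delta\psi_h^i\|^2\bigr)$ is controlled uniformly by the $\ell^2$-in-time estimate on $\|\Delta\psi_h\|$ from Lemma~\ref{lem:discapriori}. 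The principal technical obstacle will be tracking the exponential weights through the nonlinear term so that the coefficient absorbed into the Gronwall factor is of the form $e^{-2\alpha(t_N-t_i)}\|\Delta\psi_h^i\|^2$, thereby guaranteeing that the resulting exponential factor remains bounded uniformly in $N$ rather than growing like $e^{CT}$; this is what allows the final constant $C$ to be at worst polynomial in $t_N$.
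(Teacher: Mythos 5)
Your proposal follows essentially the same route as the paper's proof: the same test function $\chi=e^{2\alpha t_n}\bm{e}_h^n$, the same identities \eqref{eq:fdisc-1}--\eqref{eq:fdisc-3} and coefficient $\beta^{\star}$, the same splitting of the nonlinear difference (your two-term identity is exact, and the surviving term $-b(\bm{e}_h^n;\psi_h^n,\bm{e}_h^n)$ is handled with the identical Ladyzhenskaya--Young argument), the same Cauchy--Schwarz bound on $\nabla\tau^n$ yielding $(\Delta t)^2\int_0^{t_N}e^{2\alpha s}\|\nabla\partial_t^2\psi_h\|^2\,ds$, and the same summation plus discrete Gronwall with the factor $\exp\bigl(\Delta t\sum_n\|\Delta\psi_h^n\|^2\|\nabla\psi_h^n\|^2\bigr)$ controlled via Lemma~\ref{lem:discapriori}; only immaterial constants differ. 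The argument is correct and matches the paper.
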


\begin{proof}
    Set $\chi=e^{2\alpha t_n}  \bm{e}_h^n$ in \eqref{eq:error-e-n}.
    We use \eqref{eq:fdisc-1} and \eqref{eq:fdisc-3} by replacing $\Psi^n$ by $\bm{e}_h^n$ and skew-symmetricity of $b_0(\cdot,\cdot)$ to obtain
    \begin{align} \label{eq:fdisc-4}
        &\frac{1}{2}\ddt \norm{\nabla\hat{\bm{e}}_h^n}^2+\big(e^{-\alpha \Delta t}\nu-\frac{1-e^{-\alpha \Delta t}}{\Delta t}\frac{1}{\lambda_1}\big) \|\Delta \hat{\bm{e}}_h^n\|^2\\
        &\;\;\;\;\;\;\leq e^{-\alpha \Delta t}  \|\nabla \hat{\tau}^n\|  \|\nabla \hat{\bm{e}}_h^n\| + e^{-\alpha \Delta t}  |b(\hat{\bm{e}}_h^n;\psi_h^n, \hat{\bm{e}}_h^n)| \nonumber\\
        &\;\;\;\;\;\;\leq C e^{-\alpha \Delta t} \Big(\|\nabla \hat{\tau}^n\|  \|\Delta \hat{\bm{e}}_h^n\|
        + \|\Delta \hat{\bm{e}}_h^n\| \|\nabla \psi^n_h\|_{L^4}  \|\nabla \hat{\bm{e}}_h^n\|_{L^4}\Big)\nonumber\\
        &\;\;\;\;\;\;\leq C e^{-\alpha \Delta t} \Big(\|\nabla \hat{\tau}^n\|  \|\Delta \hat{\bm{e}}_h^n\|
        + \|\Delta \hat{\bm{e}}_h^n\|^{3/2}  \|\Delta \psi_h^n\|^{1/2} \|\nabla \psi^n_h\|^{1/2}  \|\nabla \hat{\bm{e}}_h^n\|^{1/2}\Big)\nonumber\\
        &\;\;\;\;\;\;\leq C e^{-\alpha \Delta t} \Big(\|\nabla \hat{\tau}^n\|^2
        + \|\Delta \psi_h^n\|^2 \|\nabla \psi^n_h\|^2  \|\nabla \hat{\bm{e}}_h^n\|^2\Big) + \frac{\nu}{2}  e^{-\alpha \Delta t} \|\Delta \hat{\bm{e}}_h^n\|^2
    \end{align}
    where $\hat{v}^n=e^{\alpha t_n}v^n$.
    Here, we used the Poincar\'e inequality, generalized H\"older's inequality and Young's inequality as in Lemma~\ref{lem:gronwalltype}.
    On summing up \eqref{eq:fdisc-4} from $n=1$ to $N$, we obtain with $\bm{e}_h^0=0$
    \begin{align} \label{eq:fdisc-5}
        \norm{\nabla\hat{\bm{e}}_h^N}^2+ 2 \beta^{\star}
        \Delta t \sum_{n=1}{N} \|\Delta \hat{\bm{e}}_h^n\|^2 &\leq C \Delta t \sum_{n=1}^{N}  e^{2\alpha t_{n-1} }\|\nabla {\tau}^n\|^2 \nonumber\\
        &+ C e^{-\alpha \Delta t}  \Delta t \sum_{n=1}^{N} \|\Delta \psi_h^n\|^2 \|\nabla \psi^n_h\|^2  \|\nabla \hat{\bm{e}}_h^n\|^2.
    \end{align}
    From the definition \eqref{eq:def-tau-n}, it follows that
    \begin{equation}\label{eq:estimate-tau-n}
        e^{2\alpha t_{n-1} } \|\nabla {\tau}^n\|^2 \leq \frac{1}{(\Delta t)^2} e^{2\alpha t_{n-1} } \Big( \int_{t_{n-1}}^{t_n} \|\nabla \partial^2_t \psi_h(s) \|\ds\Big)^2 \leq \frac{\Delta t}{3}  \int_{t_{n-1}}^{t_n} e^{2\alpha s} \|\nabla \partial^2_t \psi_h(s) \|^2\ds.
    \end{equation}
    Substitute \eqref{eq:estimate-tau-n} in \eqref{eq:fdisc-5} to arrive with smallness of $\Delta t$ at
    \begin{align*}
        \norm{\nabla\hat{\bm{e}}_h^N}^2+ 2 \beta^{\star}
        \Delta t \sum_{n=1}{N} \|\Delta \hat{\bm{e}}_h^n\|^2 &\leq C (\Delta t)^2  \int_{0}^{t_N}  e^{2\alpha s}
        \|\nabla \partial^2_t \psi_h(s) \|^2\ds \nonumber\\
        &+ C e^{-\alpha \Delta t}  \Delta t \sum_{n=1}^{N-1} \|\Delta \psi_h^n\|^2 \|\nabla \psi^n_h\|^2  \|\nabla \hat{\bm{e}}_h^n\|^2.
    \end{align*}
    An application of the Gronwall's Lemma now yields
    \begin{equation*}
        \begin{aligned}
        &\norm{\nabla\hat{\bm{e}}_h^N}^2 + 2 \beta^{\star}
        \Delta t \sum_{n=1}{N} \|\Delta \hat{\bm{e}}_h^n\|^2\\
        &\;\;\;\;\;\;\leq C (\Delta t)^2 \Big(\int_{0}^{t_N}  e^{2\alpha s} \|\partial^2_t \nabla \psi_h(s) \|^2\ds\Big)   \exp \Big(\Delta t \sum_{n=0}^{N-1} \|\Delta \psi_h^n\|^2 \|\nabla \psi^n_h\|^2\Big)\\
        &\;\;\;\;\;\;\leq C (\Delta t)^2  \Big(\int_{0}^{t_N}  e^{2\alpha s} \|\nabla \partial^2_t \psi_h(s) \|^2\ds\Big)   \exp \Big(\int_{0}^{t_N} \|\Delta \psi_h(s)\|^2 \|\nabla \psi_h(s)\|^2\ds\Big).
        \end{aligned}
    \end{equation*}
    This completes the rest of the proof.
\end{proof}
\section{Numerical Experiments}\label{sec:num}

This section focusses on several numerical experiments on benchmark problems and then on their  results confirming our theoretical findings.

For discrete conforming space, we choose Hsieh-Clough-Tocher (HCT) macro-element which is a piecewise cubic polynomial space on each sub-triangle.
\subsection{Convergence rate}
Since we consider the backward Euler method, the time discretization error yields $\mathcal{O}(\Delta t)$.
Therefore, the expected optimal error for a smooth function is, for $s=0,1,2$,
\begin{equation*}
    \|\psi(T)-\psi_h^N\|_s\leq C(h^{4-s}+\Delta t).
\end{equation*}
To verify convergence order of space discretization error, we compare error at the final time measured in $L^2$-, $H^1$-, and $H^2$-norm with varying $h$ and fixed $\Delta t=10^{-3}$.
Here, $\Delta t$ is chosen small so that the error from time discretization can be neglected.
The tested solution is defined by
\begin{equation*}
    \psi = \frac{1-e^{-t}}{1-e^{-0.1}}\sin(\pi x)^2sin(\pi y)^2
\end{equation*}
with $\psi(0)\equiv 0$, $\nu= 1.6667$, and $\mu=10^{3}$.
$F$ is chosen so that the solution satisfies \eqref{eq:qge}.
Convergence history with respect to the degrees of freedom is reported in Figure~\ref{fig:conv-hist-smooth}.
\begin{figure}
    \centering
    \includegraphics[width=0.45\textwidth]{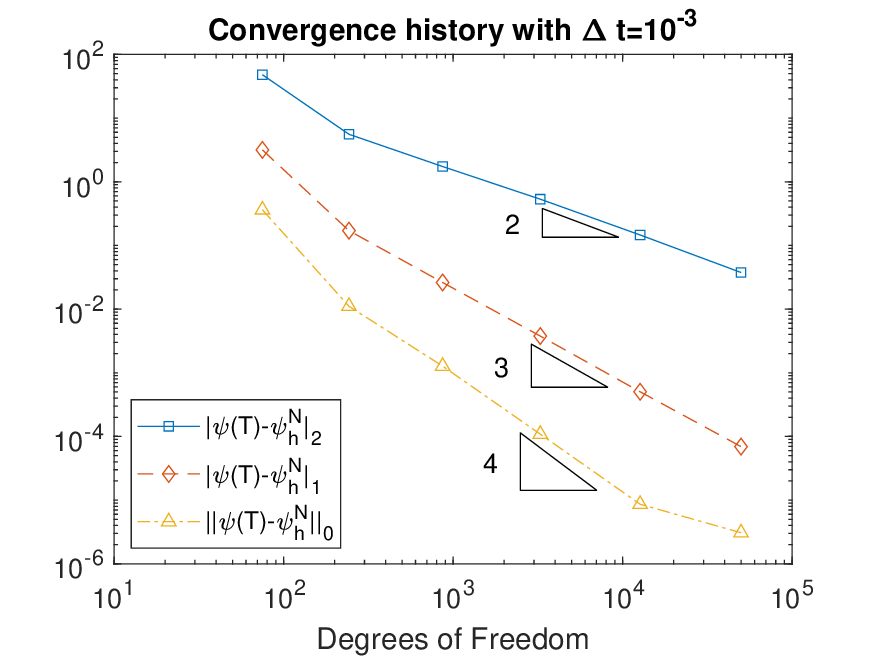}
    \caption{Convergence history with respect to the degrees of freedom. Right triangles indicate the ideal convergence order with respect to $h$.\label{fig:conv-hist-smooth}}
\end{figure}
We have $\|\psi(T)-\psi_h^N\|_j=\mathcal{O}(h^{4-j})$ for $j=0,1,2$ which comply with the results in Theorem~\ref{thm:apriori}.
\subsection{Exponential decaying with no force}
This subsection is on the verification of  the exponential decaying property of
the solution when $F=0$. Consider a closed rectangular basin,
$\Omega=(0,1)\times(-1,1)$. With initial condition
$\psi_0=\sin(\pi x)^2\sin(\pi y)^2$, we observe $\|\nabla
\psi_h(t)\|_{L^2(\Omega)}$ for $t=(0,0.1)$ with various $\nu$ and
$\mu$. The results with $h=2^{-7}$ and $\Delta t=10^{-3}$ are
reported in Figure~\ref{fig:zero_test}.

\begin{figure}[t]
    \centering
    \includegraphics[width=0.45\textwidth]{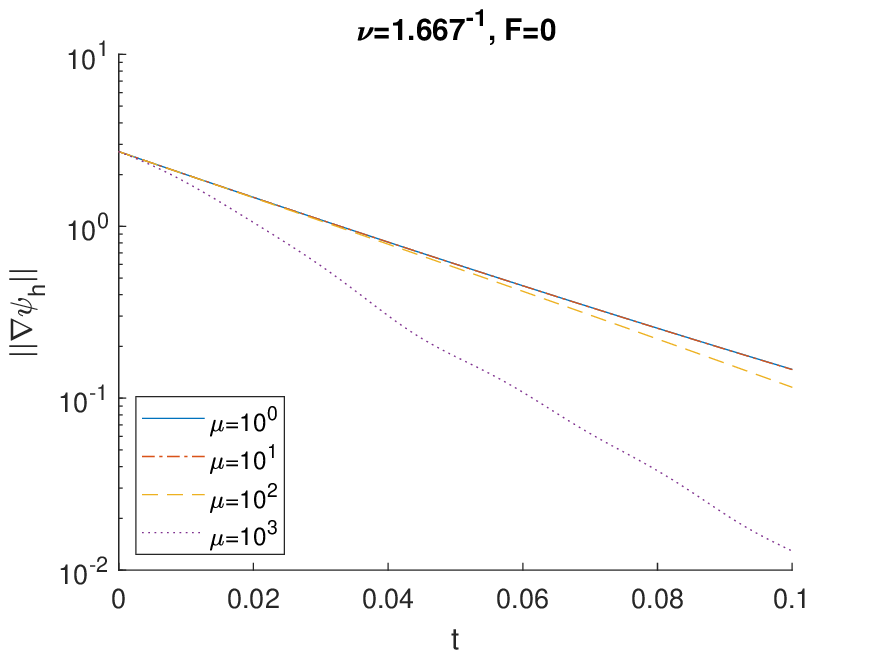}
    \includegraphics[width=0.45\textwidth]{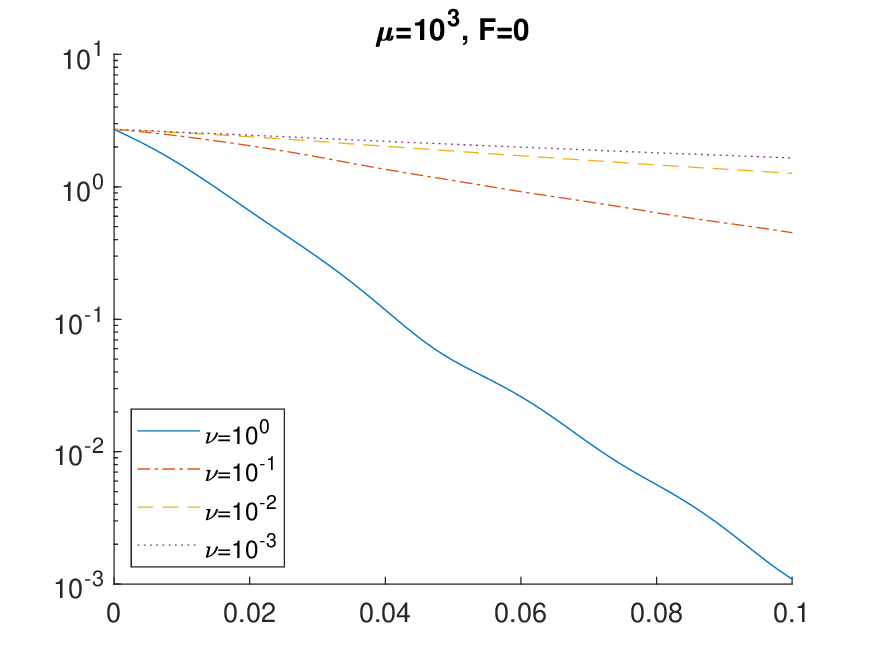}
    \caption{History of $\|\nabla\psi_h\|$ with various $\mu$ (left) and $\nu$ (right) when $F=0$.\label{fig:zero_test}}
\end{figure}
As expected in Remark~\ref{rmk:decaying}, the solution converges to 0 exponentially regardless of the choice of $\nu$ and $\mu$.
However, the rate of decay depends on $\nu$ and $\mu$.
Note that $\nu$ is the vorticity diffusion coefficient and $\mu$ is the convection speed.
Therefore, with homogeneous boundary condition, we can expect that the solution converges to 0 faster, when $\nu$ and $\mu$ are large. The numerical results conform with this heuristic.

\subsection{Attractor with time-independent force}
Finally, we observe the dynamics of stream function when $F$ is given as a non-trivial time-independent function.
For simplicity, we choose $\psi_0=0$ and $F=\sin(\pi y)$ which is a simplified wind shear stress, see \cite{bryan1963,sverdrup1947}.
\begin{figure}[t]
    \centering
    \includegraphics[width=0.45\textwidth]{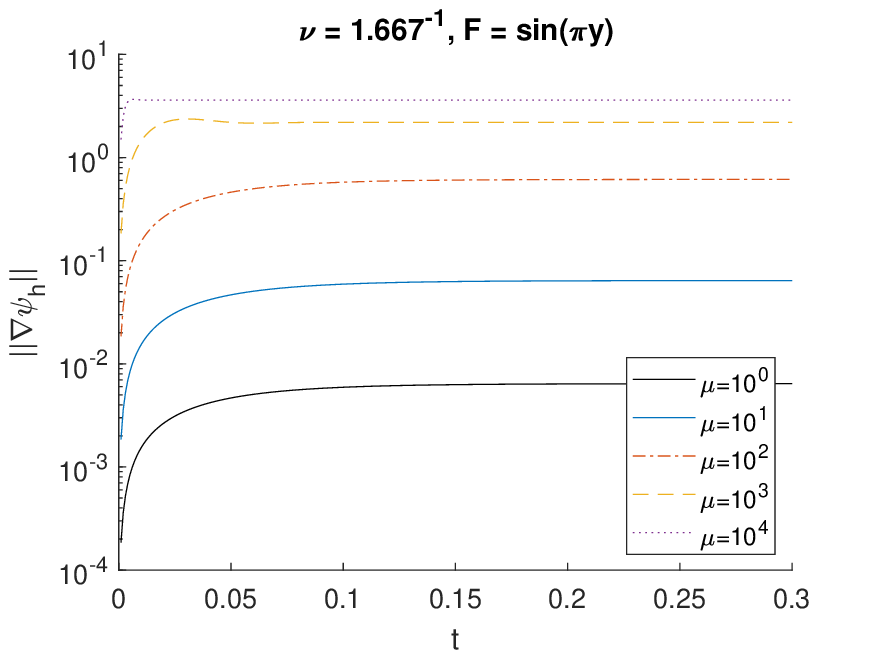}
    \includegraphics[width=0.45\textwidth]{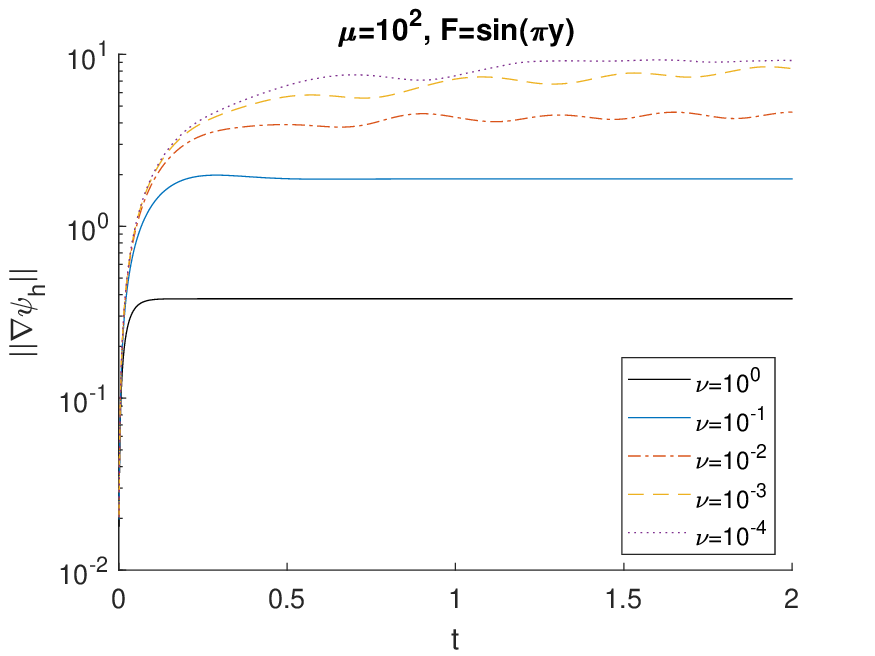}
    \caption{History of $\|\nabla\psi_h\|$ with various $\mu$ (left) and $\nu$ (right) when $F=\sin(\pi y)$.}
    \label{fig:sin_test}
\end{figure}
In Figure~\ref{fig:sin_test}, it is observed that the solution converges to a point attractor when $\nu$ is relatively large, i.e., relatively small $Re$.
On the other hand, when $\nu$ is relatively small, the energy locally fluctuates and does not converge to a stationary solution.
This indicates that solution transits from a laminar flow to a turbulent flow as $\nu$ varies.
Profiles of solution at $t=1,2,3,4$ are depicted in Figures~\ref{fig:profile_1}--\ref{fig:profile_3} for $\nu=1,\;0.01,\;0.0001$, respectively, with $\mu=100$.
The solution converges to a stationary solution when $\nu=1$ where the solution slightly skewed to the western boundary.
Also, the solution is almost symmetric with respect to the line $y=0$.
For $\nu=0.01$, the solution oscillates and does not converge to a stationary solution.
However, it still maintains symmetric behavior.
When $\nu=0.0001$, upper gyres and lower gyres are mixed and the solution shows chaotic behavior.

\begin{figure}[t]
    \centering
    \includegraphics[width=0.24\textwidth]{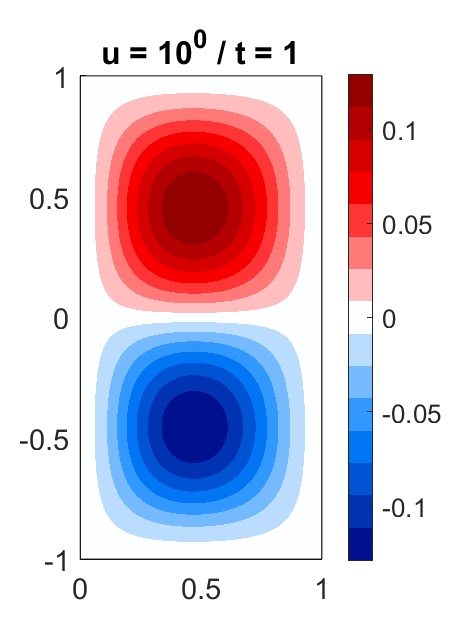}
    \includegraphics[width=0.24\textwidth]{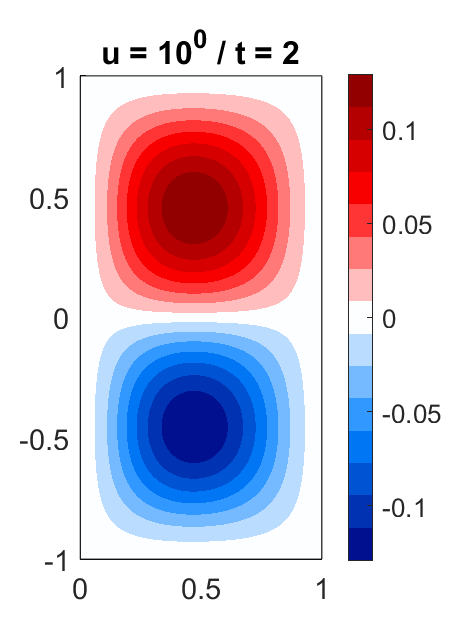}
    \includegraphics[width=0.24\textwidth]{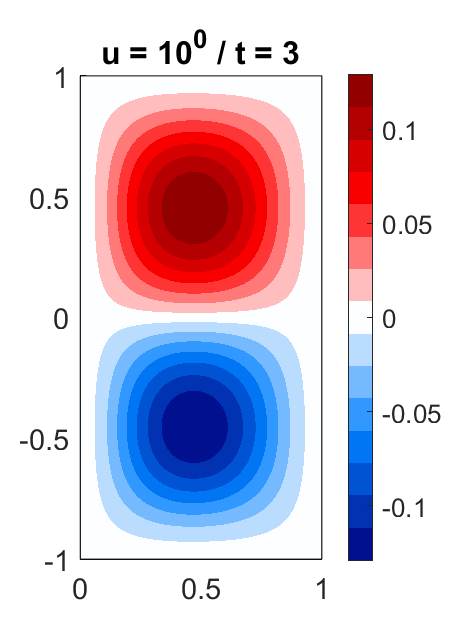}
    \includegraphics[width=0.24\textwidth]{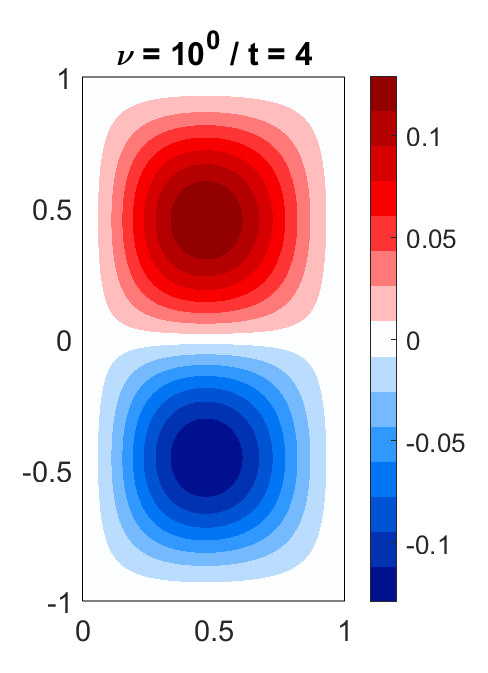}
    \caption{Streamfunction profiles at $t=1,2,3,4$ with $\nu=1$ and $\mu=100$.\label{fig:profile_1}}
\end{figure}
\begin{figure}[t]
    \centering
    \includegraphics[width=0.24\textwidth]{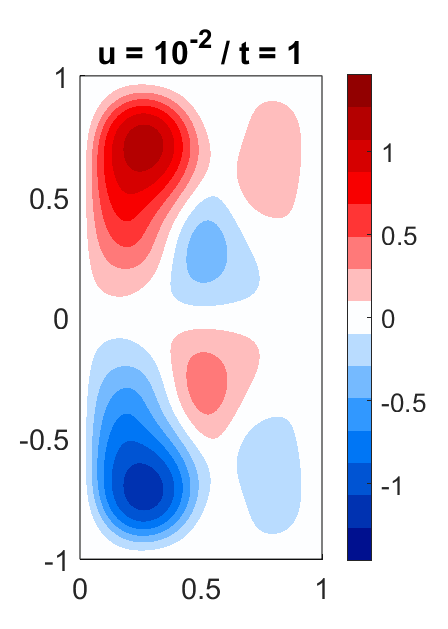}
    \includegraphics[width=0.24\textwidth]{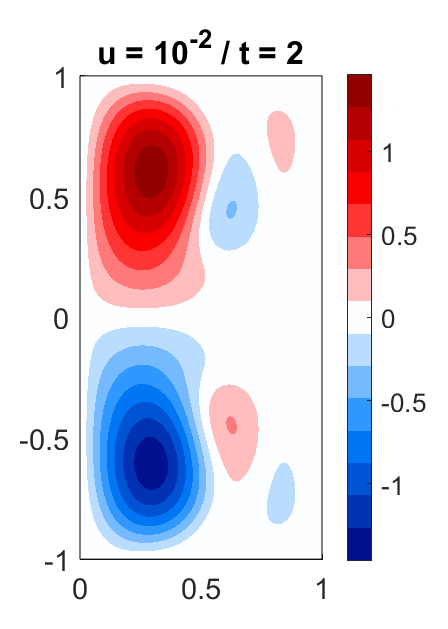}
    \includegraphics[width=0.24\textwidth]{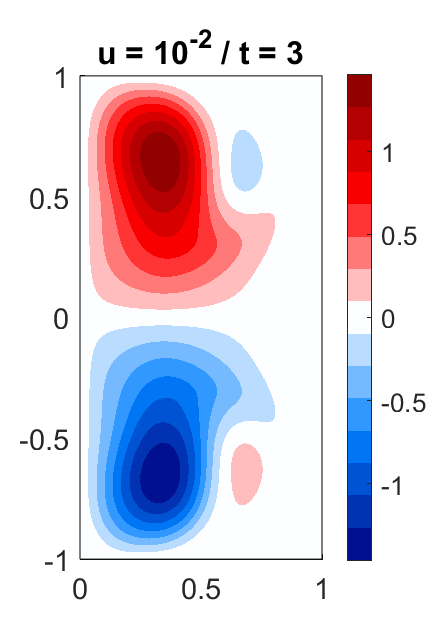}
    \includegraphics[width=0.24\textwidth]{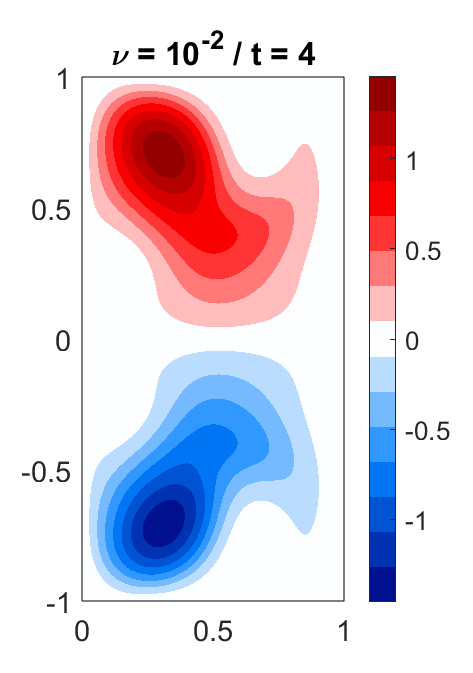}
    \caption{Streamfunction profiles at $t=1,2,3,4$ with $\nu=0.01$ and $\mu=100$.\label{fig:profile_2}}
\end{figure}
\begin{figure}[t]
    \centering
    \includegraphics[width=0.24\textwidth]{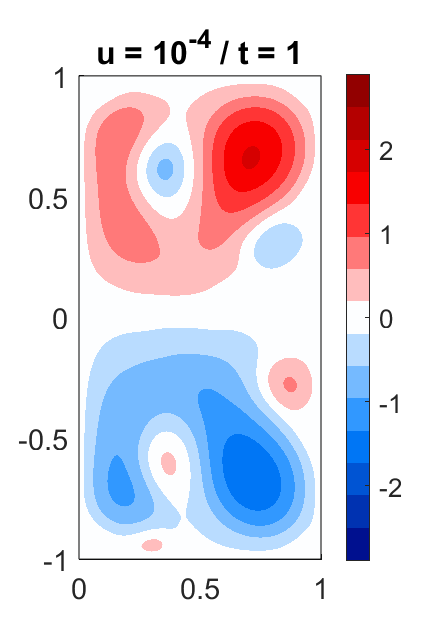}
    \includegraphics[width=0.24\textwidth]{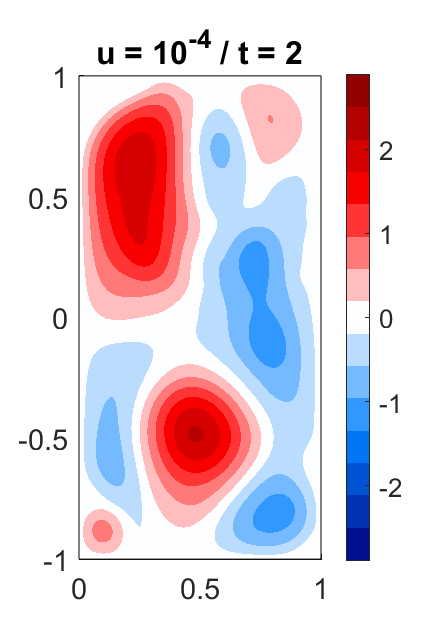}
    \includegraphics[width=0.24\textwidth]{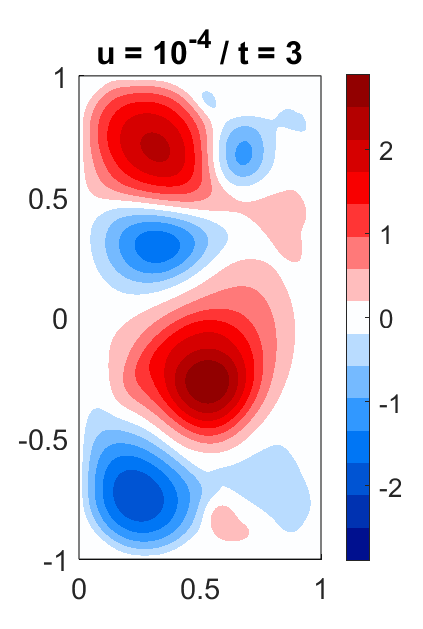}
    \includegraphics[width=0.24\textwidth]{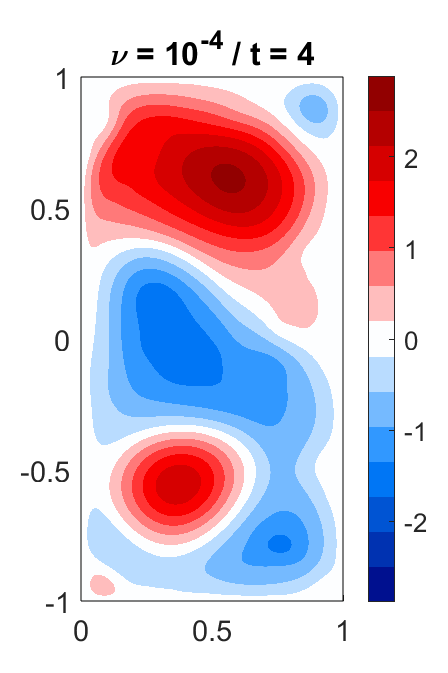}
    \caption{Streamfunction profiles at $t=1,2,3,4$ with $\nu=0.0001$ and $\mu=100$.\label{fig:profile_3}}
\end{figure}

\section{Conclusion}\label{sec:con}
In this study, we introduce a family of $C^1$-conforming finite element method for the evolutionary surface QG equation.
{\textit{A priori}} estimates for the continuous solution are provided.
The existence of a global attractor is established and remarks with some special cases, $F=0$ or time-independent, are provided.
The optimal convergence for $C^1$-conforming finite element method is derived, which also covers low regularity.
Similarly to the continuous case, the finite element solution also converges to a discrete attractor as $t\rightarrow\infty$.
The optimal convergence with a smooth solution is verified numerically with HCT-element.
The exponential decaying property with various choice of physical parameters is observed when $F=0$.
With non-zero time-independent $F$, the solution shows stable behavior with the bounded energy.
When $\nu$ is between $10^{-1}$ and $10^{-2}$, the energy fluctuates locally which shows that the solution becomes a turbulent flow rather than a laminar flow.

In the future, we extend our method to the multi-layer surface QG equations to cope with a vertically inhomogeneous flow.
Nonconforming finite element methods will be also considered to avoid the complicate implementation of $C^1$-finite element.
The theoretical findings in this study will result in the guidance of the future study.


\end{document}